\newcommand{\leqnomode}{\tagsleft@true}
\newcommand{\reqnomode}{\tagsleft@false}
\date{}
\def\nd{\noindent}
\def\thend{\rule{3mm}{3mm}}
\newtheorem{theorem}{Theorem}[section]
\newtheorem{definition}{Definition}[section]
\newtheorem{cor}{Corollary}[section]
\newtheorem{prop}{Proposition}[section]
\newtheorem{lemma}{Lemma}[section]
\newtheorem{rmk}{Remark}[section]
\newtheorem{exm}{Example}[section]
\newcommand{\R}{\mathbb{R}}
\begin{document}
	\title[Singular Stein-Weiss problems]{Multiplicity of solutions for singular elliptic problems with Stein-Weiss term}
	\vspace{1cm}

    \author{Márcia S. B. A. Cardoso}
	\address{Márcia S. B. A. Cardoso \newline Universidade Federal de Goias, IME, Goi\^ania-GO, Brazil }
	\email{\tt marcia.cardoso@ifg.edu.br}
	
	\author{Marcos. L. M. Carvalho}
	\address{M. L. M. Carvalho \newline Universidade Federal de Goias, IME, Goi\^ania-GO, Brazil }
	\email{\tt marcos$\_$leandro$\_$carvalho@ufg.br}

    \author{Edcarlos D. Silva}
	\address{Edcarlos D da Silva \newline  Universidade Federal de Goias, IME, Goi\^ania-GO, Brazil}
	\email{\tt edcarlos@ufg.br}

    \author{Minbo Yang}
	\address{Minbo Yang \newline School of Mathematical Sciences, Zhejiang Normal University, Jinhua 321004, Zhejiang,
People’s Republic of China }
	\email{\tt mbyang@zjnu.edu.cn}

	\subjclass[2010]{35A01 ,35A15,35A23,35A25}
	
	\keywords{Singular problem, Superlinear nonlinearities, Nonlinear Rayleigh quotient, Stein-Weiss problems}
	\thanks{The second author was partially supported by CNPq with grants 309026/2020-2. CNPq partially supported the fourth author with grant 316643/2021-1. Minbo Yang was partially supported by NSFC (12471114) and ZJNSF(LZ22A010001).}
    \thanks{Corresponding author: Edcarlos D. Silva, email: edcarlos@ufg.br}

	\begin{abstract}
        In the present work, we establish the existence and multiplicity of positive solutions for the singular elliptic equations with a double weighted nonlocal interaction term defined in the whole space $\mathbb{R}^N$. The nonlocal term and the fact that the energy functional is not differentiable are the main difficulties for this kind of problem. We apply the Nehari method and the nonlinear Rayleigh quotient to prove that our main problem has at least two positive weak solutions. Furthermore, we prove a nonexistence result related to the extreme $\lambda^*> 0$ given by the nonlinear Rayleigh quotient.
    \end{abstract}
	
	\maketitle
	
	\section{Introduction}

In this work, we consider the existence and multiplicity of positive weak solutions for singular elliptic problems taking into account a double weighted nonlocal term. More precisely, we shall consider the following nonlocal elliptic problem:
\begin{equation} \label{P1}
  \left\{\begin{array}{ll}
    - \Delta u +V(x) u =   \displaystyle\lambda \frac{a(x)}{u^{1-q}}  + \displaystyle \int \limits_{\mathbb{R}^N}\frac{b(y)\vert u(y) \vert^p dy}{\vert x\vert^\alpha\vert x-y\vert^\mu \vert y\vert^\alpha} b(x)\vert u\vert^{p-2}u,\ \  \hbox{in}\ \mathbb{R}^N,
    \\
    u>0,~u\in H^1(\mathbb{R}^N).
\end{array}
\right.  \tag{$Q$}
\end{equation}
where $0<q<1$, $\lambda > 0$, $p \in (2_{\alpha,\mu}, 2_{\alpha,\mu}^*)$ where $\mu>0,0< 2\alpha+\mu< N, \,\,\alpha\in (0,N), 2_{\alpha,\mu}=(2N-2\alpha-\mu)/N$ and $2_{\alpha,\mu}^*=(2N-2\alpha-\mu)/(N-2)$. Later on, we shall discuss some extra assumptions on the continuous potentials $a, V: \mathbb{R}^N \to \mathbb{R}$.
The nonlocal elliptic equation with such kind of double weighted nonlocal term was introduced by \cite{minbo, Yang}. This type of problem can be understood as the Euler-Lagrange equation of a minimizing problem related to the Stein-Weiss problem introduced in \cite{Stein_weiss1958}. Furthermore, assuming that $\alpha = 0$, we obtain the singular Choquard problem which can be written in the following form:
\begin{equation} \label{C}
  \left\{\begin{array}{ll}
    - \Delta u +V(x) u =   \displaystyle\lambda \frac{a(x)}{u^{1-q}}  + \displaystyle \int \limits_{\mathbb{R}^N}\frac{b(y)\vert u(y) \vert^p dy}{\vert x-y\vert^\mu  } b(x)\vert u\vert^{p-2}u,\ \  \hbox{in}\ \mathbb{R}^N,
    \\
    u>0,~u\in H^1(\mathbb{R}^N).
\end{array}
\right.  \tag{$C$}
\end{equation}
where $0<q<1$, $\lambda > 0, (N + \mu)/N < p < (N + \mu)/( N - 2), N \geq 3$ and $a, b: \mathbb{R}^N \to \mathbb{R}$ are continuous potentials satisfying some extra assumptions. Many results on the existence, multiplicity, and symmetry of solutions for the Choquard problem have been obtained in the last years depending on the assumptions on the nonlinearity and the potential $V$. On this subject, we refer the reader to the works \cite{Alves,Alves3,RAY2021,Ding,Gao1,Gao3}. The main obstacle in this kind of problem is to control the behavior of the convolution term taking into account the potential $b: \mathbb{R}^N \to \mathbb{R}$ and the fact that $\mu \in (0, N)$. We refer the reader also to the works \cite{Moroz_Van,MOROZ,Moroz2}. The Choquard equation has been widely considered in the last decades and it has also several physical applications. For instance, assuming that $N=3, \mu=1, V\equiv 1,p=2,\lambda=0$ and $b=1$, the Problem \eqref{C} boils down to the Choquard-Pekar equation
	\begin{equation*}
		- \Delta u + u =\left(\displaystyle\frac{1}{\vert x\vert}*\vert u\vert^2\right)u\ \  \hbox{in}\ \mathbb{R}^3, 	u\in H^1(\mathbb{R}^3).
	\end{equation*}
These kind of problems were introduced in \cite{Pekar} assuming that $N =3, \lambda = 0, \alpha = 0$ and $\mu = 1$. The main objective of that work was to describe a polaron at rest in quantum field theory. The same problem is also used to describe an electron trapped in its own hole. The main idea here is to use an approximation of Hartree-Fock theory of one-component plasma. On this subject we refer the interested reader to \cite{Penrose} where it was also proposed a model of self-gravitating matter in which the reduction of the quantum state is considered a gravitational phenomenon. In this context, the problem is known as the nonlinear Schr\"odinger-Newton equation.

It is necessary to emphasize that the singular elliptic problems have been widely studied in the last years, see \cite{Coclite,Fulks,kaye,Yao,Yijing,Yijing2008}. In \cite{Yijing} the authors considered a class of singular elliptic problems with Dirichlet boundary conditions where the nonlinearity considers the effects of a superlinear term and a singular term. This problem allows us to establish the existence and multiplicity of solutions taking into account a parameter
$\lambda > 0$ in front of the singular term. In that work, the authors considered the following singular elliptic problem:
\begin{equation}\label{Yijing}\ \
 \left\{\begin{array}{ll}
    - \Delta u=\lambda u^{\beta} +p(x) u^{-\gamma},\;\;u\in \Omega,\\
    u>0,\;\;\hbox{in}\;\;\Omega,\\
    u=0,\;\;\hbox{on}\;\;\partial\Omega,
\end{array}\right. \tag{$Q1$}
\end{equation}
where $\Omega \subset \mathbb{R}^N$ is a bounded domain, $p:\Omega \to \mathbb{R}$ is a non-negative and non-trivial function in $L^2(\Omega)$, $1 < \beta < 2^* - 1$, $0 < \gamma < 1$, $2^* = 2N/(N-2)$ is the usual critical Sobolev exponent, $N \geq 3$, and $\lambda > 0$ is a parameter. In \cite{Yijing2008} the authors considered the following singular elliptic problem:
\begin{equation}\label{Yijing2008}\ \
 \left\{\begin{array}{ll}
    - \Delta u=\lambda a(x) u^{-\gamma} +b(x) u^{p},\;\;u\in \Omega,\\
    u>0,\;\;\hbox{in}\;\;\Omega,\\
    u=0,\;\;\hbox{on}\;\;\partial\Omega,
\end{array}\right. \tag{$Q2$}
\end{equation}
where $a, b \in C(\overline{\Omega})$, $a > 0$ and $b$ can be a sing-changing function. In that work, the authors proved a result showing that there exists $\lambda^* > 0$ such that Problem \eqref{Yijing2008} admits at least two non-negative solutions for each $\lambda \in (0, \lambda^*)$. On this subject, for bounded domains, we refer the interesting reader to \cite{NS, Rabinowitz, Lazer1991, Yijing} where different techniques and many types of nonlinearities are considered. On this topic taking into account unbounded domains, we refer the reader to \cite{kaye} and references therein.

Many researchers have been attracted due to the pioneering on singular elliptic problems introduced in \cite{Fulks}.  For example, in \cite{Yijing,Coclite} the authors proved that there exists $\Lambda^* \in (0, \infty)$ such that Problem \eqref{Yijing} has at least one solution for each $\lambda  \in (0,\Lambda^*)$ and does not exist any solution for $\lambda > \Lambda^* $ whenever $p \equiv 1$ in $\overline{\Omega}$. The authors showed that there is a unique solution under extra assumptions on $p$. It is worth noting that in \cite{Yao, Wiegner} the existence of a unique positive solution was proved in the cases $b \equiv 1$ or $0 < b < 1$.
In \cite{kaye}, the authors studied singular elliptic problems proving the existence and multiplicity of positive solutions for the following problem:
\begin{equation*}\label{Kaye}
    -\Delta u+V(x)u=\lambda a(x)u^{-\gamma}+b(x)u^p,\;\;\hbox{in}\;\;\mathbb R^N.
\end{equation*}
In that work, it was investigated via the Nehari method together with the nonlinear Rayleigh quotient which was introduced in \cite{YAVDAT2017}. Here we observe that in \cite{kaye} the potential $b \in L^{\infty}(\mathbb{R}^N)$ is a sign-changing function, $0 < a \in L^{\frac{2}{1+\gamma}}(\mathbb{R}^N)$, $V$ is a continuous positive function, $N \geq 3$, and $\lambda > 0$ is a parameter.

In order to deal with the singular elliptic problems by variational methods the main difficulty arises from the lack of regularity due to the fact that the energy functional is not in class $C^1$. Moreover, by using the fibering map and the Nehari method for our case, we are able to prove the main results by the fact that the functional is only of $C^0$ class. Hence, some classical tools can be not applied in order to ensure the existence and the multiplicity of critical points for the energy functional. Here, we shall apply the Nehari method together with the nonlinear Rayleigh quotient which is only in $C^0$ class due to the singular term. The nonlinear Raleigh quotient was also applied in many other situations, see for instance \cite{RAY2021,Tpar,MYC,YAVDAT2017}. It is important to emphasize that depending on the size of $\lambda > 0$ inflection points for the fibering map is allowed. This kind of problem is overcome using some fine estimates together with some topological structures of the Nehari set.

As we all know, a nonzero critical point $u$ is said to be a ground state solution if $u$ has the minimal energy among any others nontrivial solutions. Furthermore, $u$ is a bound state solutions whenever $u$ is a critical point for the energy functional which has finite energy.

Recall that a point $u$ is called as a degenerated point whenever the second derivative for the energy functional at $u$ is zero. Otherwise, $u$ is said to be a nondegenerate point. It is important to stress that using the Nehari set we are able to prove that there exist minimizers in suitable energy level.  Assuming that $u$ is a nondenegerate point the Lagrange Multiplier Theorem says that the $u$ is a critical point for the energy functional. However, assuming that $u$ is a degenerated point, the Lagrange Multiplier Theorem does not anymore showing that the Nehari set is not a natural constraint. Hence, the main idea is to avoid degenerate points which are also minimizer in the Nehari set. It is worthwhile to mention that for $\lambda = \lambda^*$ there exists degenerate point for the energy functional. In fact, $\lambda^*$ is the smallest positive number with this property showing that the associated fibering map admits also inflections points. For further results on this subject we refer the reader to \cite{RAY2021,Tpar,MYC}.

 Our main contribution in the present work is to study the existence and multiplicity of solutions for singular elliptic problems taking into account a Stein-Weiss term.
 The main feature here is to prove the existence of a parameter $\lambda^*$ such that our main problem has at least two solutions for each $\lambda \in (0,\lambda^*)$. A solution is a ground state solution while the another one is bound state solution.  In fact, $\lambda^*$ is the largest positive number in such a way that the Nehari method can be applied directly. The main purpose here is to prove that inflection points are not verified for the fibering map in the associated energy functional. Furthermore, we prove a nonexistence result for $\lambda = \lambda^*$. The last assertion enables us to ensure that Problem \eqref{P1} admits at least two positive solutions for $\lambda = \lambda^*$. In order to do that we consider a sequence $(\lambda_k)$ such that $\lambda_k \to \lambda^*$ and $\lambda_k < \lambda^*$. The main strategy is to consider two minimizer sequences $(u_k)$ and $(w_k)$ in the Nehari sets showing that these sequences strongly converge. Moreover, we consider a second extreme $\lambda_* < \lambda^*$ which permits us to classify the sign of the energy functional for any nonzero weak solution for the Problem \eqref{P1}.

Below, we summarize the main contributions of this work to the current literature.
\begin{enumerate}
\item As far as we know, this is the first work driven by a Stein-Weiss term defined in the whole $\mathbb{R}^N$ together with a singular term and potentials $a, b: \mathbb{R}^N \to \mathbb{R}$.

\item  We prove the existence of an extreme value $\lambda^* > 0$ such that our main problem has at least two positive solutions for each $\lambda \in ( 0,\lambda^*)$. Hence, we highlight the importance of the size of the parameter $\lambda > 0$ obtaining results on the minimizers in the Nehari sets.

\item We prove a nonexistence result showing that there does not exist weak solutions for our main problem which are also inflection points for the associate fibering map whenever $\lambda = \lambda^*$. This fact allows to prove a multiplicity of solutions for the case $\lambda = \lambda^*$. We show also that the Nehari set is empty for each $\lambda \in (0, \lambda^*)$.

\item By using some fine estimates we shall prove that any minimizer in the Nehari set is a weak solution for our main problem. This is not an easy issue due the fact that our energy functional is only in $C^0$ class.
\end{enumerate}

\subsection{Assumptions and main theorems}
 As mentioned previously, we are concerned with the existence of positive \textit{ground} and \textit{bound states} solutions to the Problem \eqref{P1} involving singular-superlinear nonlinearities with a double weighted nonlocal term. Hence, we need to find a suitable range of parameters to obtain our main results. The main difficulty for this type of problem arises from the non-differentiability of the energy functional. Moreover, we observe also that $\mathcal{N}_{\lambda^*}^0 \neq \emptyset$. In the present work,  we overcome this difficulty by considering the nonlinear Rayleigh quotient showing that there exists an extremal $\lambda^*> 0$ such that the Nehari method can be employed for each $\lambda\in (0,\lambda^*].$ Under these conditions, we define some appropriate minimization problems in order to deal with the lack of regularity of the energy functional. Here we apply some ideas discussed in the recent works \cite{kaye, Yijing}.

Throughout the present work we consider the following assumptions:
\begin{itemize}
    \item[($H_0$)]$0<a\in L^{\frac{2}{2-q}}(\mathbb{R}^N),\;\; b>0,\;\; b \in L^{\infty}(\mathbb{R}^N), \;\;0<q<1<2_{\alpha,\mu}<p<2^*_{\alpha,\mu},~N\geq 3,\;\;\mu>0,\;\;0<2\alpha+\mu< N,\;\; \\ \alpha\in (0,N),~
    2_{\alpha,\mu}= (2N-2\alpha-\mu)/N$  and $2_{\alpha,\mu}^*= (2N-2\alpha-\mu)/(N-2);$
\item[($H_1'$)] $ a\notin L^1(\mathbb R^N),\;\;b\in  L^{\frac{2N}{2N-2\alpha-\mu-(N-2)(p-q)}}(\mathbb R^N);$
\item[($H_2'$)] $ b\in L^{\frac{2N}{2N-2\alpha-\mu-p(N-2)}}(\mathbb R^N).$
\end{itemize}
The potential $ V:\mathbb R^N\to \mathbb R$ is a continuous positive function that satisfies the following statements:
\begin{itemize}
\item[($V_1$)] There exists a constant $V_0>0$ such that
   $ V_0:=\displaystyle\inf_{x\in \mathbb R^N}V(x).$
\item[($V_2$)] For each  $M>0$ we assume that
    $ \left\vert \left\{x\in\mathbb{R}^N\big |V(x)\leq M\right\}\right\vert<\infty.$
\end{itemize}
Now, we shall present some examples for our setting.
\begin{exm}
   Let $a:\mathbb{R}^N\to \mathbb{R}$ be a potential function defined by $a(x)=1/{\left(1+\vert x\vert^2\right)^{\gamma_{3}}}$ where
   $$\frac{N(2-q)}{4}<\gamma_3<\displaystyle\frac{N}{2}.$$
   Using the co-area Theorem we prove that $a\in L^{\frac{2}{2-q}}(\mathbb{R}^N)$ and  $a\notin L^{1}(\R^N)$.
\end{exm}
\begin{exm}
 Consider the potential function $b:\mathbb{R}^N\to \mathbb{R}$ given by $b(x)={1}/{(1+\vert x\vert^2)^{\gamma_4}}$ where $\gamma_4>\max\left\{\displaystyle N/(2\zeta_1),N/(2\zeta_2)\right\}$ and
 $$\zeta_1=\displaystyle\frac{2N}{2N-2\alpha-\mu-(N-2)(p-q)}\qquad\mbox{ and }\qquad\zeta_2=\displaystyle\frac{2N}{2N-2\alpha-\mu-p(N-2)}.$$
 Hence, applying the Co-area Theorem, we observe that
\begin{eqnarray*}
\displaystyle\int_{\R^N}\left(\displaystyle\frac{1}{\left(1+\vert x\vert^2\right)^{\gamma_4}}\right)^\zeta dx \leq c_1+c_2\int_1^\infty \theta^{-2\gamma_4 \zeta+N-1}d\theta<\infty
\end{eqnarray*}
when $\gamma_4>\frac{N}{2\zeta}$. Hence, $b\in L^{\zeta_1}(\R^N)\cap L^{\zeta_2}(\R^N)$.
\end{exm}
Now, we shall provide an example of a potential $V$ that satisfies our hypotheses ($V_1$) and ($V_2$).
\begin{exm}
Consider the coercive potential $V: \mathbb{R}^N \to \mathbb{R}$ given by
 $$V(x) = 1 + \vert x\vert^2.$$
 Fixed $M>0$ we conclude $V(x)=1 + \vert x\vert^2 \le M$ is equivalent to $\vert x \vert \le K = \sqrt{M - 1}$. This implies that $|\{ x \in \mathbb{R}^n; V(x) \le M\}|<\infty$. Therefore, $V$ satisfies the hypotheses $(V_1)$ and $(V_2)$.
\end{exm}

The working space related to our main problem can be defined as follows:
 \begin{eqnarray*}
     X=\left\{ u\in H^1(\mathbb{R}^N)\big|\displaystyle\int_{\mathbb{R}^N}V(x)u^2dx<\infty\right\}.
 \end{eqnarray*}
 Recall that $X$ is a Hilbert space endowed with the inner product given by
 \begin{equation*}\label{prod.interno}
\left<u,\varphi\right>=\displaystyle\int\limits_{\mathbb R^N}(\nabla u\nabla \varphi+V(x)u\varphi) dx,\;\; u,v\in X.
 \end{equation*}
 Furthermore, the norm in $X$ is given by
 $
     \Vert u\Vert^2=\left<u,u\right>,\;\; u\in X.
 $
 The energy functional $J_{\lambda}:X\rightarrow\mathbb R$ associated with Problem \eqref{P1} is defined by
 \begin{equation}\label{funcional energia}
     J_{\lambda}(u)=\displaystyle\frac{1}{2}\Vert u\Vert^2-\frac{\lambda}{q}A(u)-\frac{1}{2p}B(u).
 \end{equation}
It is important to emphasize that the functionals $A,B:X\to \R$ are defined in the following form:
 \begin{equation}\label{AB(u)2}
 A(u)=\displaystyle\int\limits_{\mathbb R^N}a(x)\vert u(x)\vert^{q}dx \qquad\mbox{and}\qquad  B(u)=\displaystyle\int\limits_{\mathbb R^N}\int\limits_{\mathbb R^N}\frac{b(y)\vert u(y)\vert^pb(x)\vert u(x)\vert^p }{\vert x\vert^\alpha\vert x-y\vert^\mu \vert y\vert^\alpha}dxdy,\;\; u\in X.
\end{equation}
Since we are interested in studying the existence and multiplicity of positive solutions, we shall consider the functional $J_{\lambda}$ restricted to the cone of nonnegative functions in $X$. In other words, we are looking for the following space
$$X_+=\{u\in X\backslash\{0\}:u\geq 0\}.$$
Moreover, we also introduce the following notations:
\begin{equation*}\label{HD(u,phi)}
H(u,\varphi)=\displaystyle\int\limits_{\mathbb R^N}a(x)\vert u\vert^{q-2}u\varphi\;dx \qquad\mbox{and}\qquad D(u,\varphi)=\displaystyle\int\limits_{\mathbb R^N}\int\limits_{\mathbb R^N}\frac{b(y)\vert u(y)\vert^p}{\vert x\vert^\alpha\vert x-y\vert^\mu \vert y\vert^\alpha}b(x)\vert u(x)\vert^{p-2}u\varphi\;dx dy, u,\varphi\in X
\end{equation*}
It is important to stress that the functional $H(u,u)$ is well-defined. However, the functional $H$ is not well-defined for all $u,\varphi \in X$ due to the fact that $0<q<1$. Hence, we consider the following definition:
\begin{definition}\label{sol fraca do probLema}
A function $u \in X$ is a weak solution of Problem \eqref{P1} whenever $u>0$ a.e. in $\R^N$, $a u^{q-1}\varphi\in L^{1}(\mathbb{R}^N)$ and
\begin{eqnarray*}\label{eq.sol.fraca}
\displaystyle\int\limits_{\mathbb R^N}(\nabla u\nabla \varphi+V(x)u\varphi) dx= \lambda H(u,\varphi)+ D(u,\varphi), \,\, \varphi \in X.
 \end{eqnarray*}
\end{definition}
Notice that $u\in X \setminus\{0\}$ is a solution of Problem \eqref{P1} if, and only if, $u$ is a critical point of the functional $J_\lambda$.
To state our main results we shall introduce some notations. Define the fibering map $\phi_{\lambda,u}:(0,\infty)\rightarrow \mathbb R$ which is $C^{\infty}$ class given by
\begin{equation*}\label{fibra de J}
\phi_{\lambda,u}(t)=J_{\lambda}(tu)=\displaystyle\frac{t^2}{2}\Vert u\Vert^2-\displaystyle\frac{t^{q}\lambda}{q}A(u)-\displaystyle\frac{t^{2p}}{2p}B(u),\;\; t > 0.
\end{equation*}
Recall that $A(u)$ and $B(u)$ are given by \eqref{AB(u)2}. Hence, for each $ u\in X_+$ and $\lambda>0$, we deduce that
\begin{equation}\label{fibra de J'}
    \phi'_{\lambda,u}(t)=t\Vert u\Vert^2-\lambda t^{q-1}A(u)-t^{2p-1}B(u),
\end{equation}
and
\begin{equation}\label{fibra de J''}
    \phi''_{\lambda,u}(t)=\Vert u\Vert^2-(q-1)\lambda t^{q-2}A(u)-(2p-1)t^{2(p-1)}B(u).
\end{equation}

Consider the Nehari set defined as follows:
\begin{equation}
    \mathcal{N}_{\lambda}=\{u\in X_+\mid\phi_{\lambda,u}'(1)=J'_{\lambda}(u)u=0\}.
\end{equation}
It is standard to split the Nehari set into three disjoint sets as follows:
\begin{eqnarray}\label{N^+2}
  &&\mathcal{N}^+_{\lambda}=\{u\in \mathcal{N}_{\lambda}\;\big |\;\phi_{\lambda,u}''(1)>0\}\nonumber;\\
  \label{N^-2}
&&\mathcal{N}^-_{\lambda}=\{u\in \mathcal{N}_{\lambda}\;\big |\;\phi_{\lambda,u}''(1)<0\};\nonumber\\
\label{N_02}
 &&\mathcal{N}^0_{\lambda}=\{u\in \mathcal{N}_{\lambda}\;\big |\;\phi_{\lambda,u}''(1)=0\}. \nonumber
\end{eqnarray}
Now, we shall consider the study the topological structure of the sets $\mathcal{N_{\lambda}}^+,\;\mathcal{N_{\lambda}}^-\;\hbox{and}\;\;\mathcal{N_{\lambda}}^0$. The main idea is to show the existence of solutions in $\mathcal{N_{\lambda}}^+\;\hbox{and}\;\;\mathcal{N_{\lambda}}^-$. The simplest case occurs assuming that $\mathcal{N_{\lambda}}^0=\emptyset$ holds for some $\lambda > 0$. In this case, we can use the Lagrange Multiplier Theorem showing that any minimizer in the Nehari set is a critical point for the associated energy functional. Hence, the Nehari set is a natural constraint assuming that $\mathcal{N}_\lambda^0$ is empty. Under these conditions, we consider the following minimization problems:
\begin{eqnarray}
\label{c_n+1}  C_{\mathcal{N}_{\lambda}^+}: =\inf \left\{J_{\lambda}(u):u\in\mathcal{N}_{\lambda}^+\right\} & \mbox{and}& \label{C_n-1} C_{\mathcal{N}_{\lambda}^-}:=\inf\left \{J_{\lambda}(u):u\in\mathcal{N}_{\lambda}^-\right\}.
\end{eqnarray}
At this stage, we define the continuous functionals $R_n,R_e : X_+\rightarrow \mathbb R$ associated with the parameter $\lambda > 0$ in the following form:
\begin{eqnarray}\label{R_ne2}
 R_n(u)=\frac{\Vert u\Vert^2-B(u)}{A(u)} \quad \hbox{and}\qquad R_e(u)=\displaystyle q\frac{\displaystyle\frac{1}{2}\Vert u\Vert^2-\displaystyle\frac{1}{2p}B(u)}{
 A(u)}.
\end{eqnarray}
It is not hard to see that
\begin{equation}\label{B5}
u \in\mathcal N_{\lambda}\;\; \hbox{if and only if}\;\; R_n(u)=\lambda,\;u \in X_+,
\end{equation}
\begin{equation}\label{B6}
J_{\lambda}(u)=0\;\; \hbox{if and only if}\;\; R_e(u)=\lambda,\; u \in X_+.
\end{equation}
The basic idea of the Rayleigh quotient method is to understand the behavior of the fibering map $t \mapsto R_n(tu)$ for each $u \in X_+$ fixed. In order to that we define the following extreme:
\begin{eqnarray}\label{lambida^*}
\lambda^*:=\inf_{u \in X_+}\max_{t>0}R_n(tu) \qquad\hbox{and}\quad \lambda_*:=\inf_{u \in X_+}\max_{t>0}R_e(tu).
\end{eqnarray}
Now, we are staying in position to state our main result. Namely, we can prove the following result:
\begin{theorem}\label{teorema 1}
Suppose ($H_0$), ($V_1$), ($V_2$). Then $0<\lambda_*<\lambda^*<\infty$ and for each $\lambda\in (0,\lambda^*)$ the Problem \eqref{P1} admits at least two distinct positive solutions $u_{\lambda},w_\lambda \in H^1(\mathbb R^N)$. Furthermore, we obtain the following statements:
\begin{itemize}
    \item [$a)$] There holds $J''_{\lambda}(u_{\lambda})(u_{\lambda},u_{\lambda})>0$ and $J''_{\lambda}(w_\lambda)(w_\lambda, w_\lambda)<0;$
    \item[$b)$] There exists a constant $c>0$ such that $\Vert w_\lambda\Vert\geq c;$
    \item [$c)$] $u_{\lambda}$ is a ground state solution and $J_{\lambda}(u_{\lambda})<0;$
    \item [$d)$] The functions
 $\lambda\mapsto J_{\lambda}(u_{\lambda})$ and $\lambda\mapsto J_{\lambda}(w_\lambda)$ are decreasing and continuous for $0<\lambda<\lambda^*,$
\item[e)] The solution $w_\lambda$ has the following properties:
\begin{itemize}
\item[$i)$] For each
 $\lambda\in(0,\lambda_*)$ we obtain that $J_{\lambda}(w_\lambda)>0;$
\item[$ii)$] Assume $\lambda=\lambda_*$. Then we obtain that
 $J_{\lambda}(w_\lambda)=0;$
\item[$iii)$] Assume $\lambda\in(\lambda_*,\lambda^*)$. Then $J_{\lambda}(w_\lambda)<0.$
\end{itemize}
\end{itemize}
\end{theorem}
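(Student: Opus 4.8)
The plan is to run the Nehari method steered by the nonlinear Rayleigh quotient, organizing everything around the fibering map $\phi_{\lambda,u}$. First I would record the auxiliary function $\psi_u(t):=\phi'_{\lambda,u}(t)/t=\|u\|^2-\lambda t^{q-2}A(u)-t^{2p-2}B(u)$, observe that $\psi_u(t)\to-\infty$ as $t\to0^+$ and as $t\to\infty$ (since $q<2<2p$) and that $\psi_u$ has a single interior maximum; hence for $\lambda$ below the peak value $\max_{t>0}R_n(tu)$ there are exactly two critical points $t_1(u)<t_2(u)$, with $t_1u\in\mathcal{N}^+_\lambda$ (local minimum) and $t_2u\in\mathcal{N}^-_\lambda$ (local maximum). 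Since $\lambda^*=\inf_u\max_{t>0}R_n(tu)$, for every $\lambda\in(0,\lambda^*)$ one has $\lambda<\max_{t>0}R_n(tu)$ for all $u\in X_+$, which simultaneously yields $\mathcal{N}^0_\lambda=\emptyset$ and $\mathcal{N}^\pm_\lambda\neq\emptyset$; thus the Nehari set is a natural constraint. A short computation gives the explicit envelope $\max_{t>0}R_n(tu)=c_{p,q}\,\|u\|^{2(2p-q)/(2p-2)}A(u)^{-1}B(u)^{-(2-q)/(2p-2)}$, and bounding $A(u)\le c\|u\|^q$ by H\"older (using $a\in L^{2/(2-q)}$) and $B(u)\le C\|u\|^{2p}$ by the Stein--Weiss inequality makes the $\|u\|$-powers cancel exactly, so this quantity is bounded below by a positive constant; testing with a single bump gives finiteness. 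Hence $0<\lambda^*<\infty$. For the two extremes I would use the scaling identity $\max_{t>0}R_e(tu)=\tfrac{q}{2}p^{(2-q)/(2p-2)}\max_{t>0}R_n(tu)$, valid for every $u$; taking the infimum shows $\lambda_*=K\lambda^*$ with $K=\tfrac{q}{2}p^{(2-q)/(2p-2)}$, and a monotonicity check (the map $p\mapsto\frac{\ln p}{2p-2}$ is decreasing for $p>1$) shows $K\in(0,1)$, giving $0<\lambda_*<\lambda^*<\infty$.

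Next I would produce minimizers of $J_\lambda$ on $\mathcal{N}^+_\lambda$ and $\mathcal{N}^-_\lambda$. Eliminating $B(u)$ through the Nehari identity gives $J_\lambda(u)=\frac{p-1}{2p}\|u\|^2-\lambda\frac{2p-q}{2pq}A(u)$ on $\mathcal{N}_\lambda$, which together with $A(u)\le c\|u\|^q$ and $q<2$ shows $J_\lambda$ is coercive and bounded below on the whole Nehari set, so minimizing sequences are bounded in $X$. The hypotheses $(V_1)$--$(V_2)$ give the compact embeddings $X\hookrightarrow\hookrightarrow L^r(\Rn)$ for the exponents dictated by $(H_0)$--$(H_2')$, so along a subsequence $A$ and $B$ pass to the limit; the value $C_{\mathcal{N}^+_\lambda}<0$ (clear from the shape of $\phi_{\lambda,u}$) forces the weak limit to be nontrivial, and passing the Nehari identity to the limit upgrades weak to strong convergence, placing the limit in the correct component (here $\mathcal{N}^0_\lambda=\emptyset$ prevents escape to a degenerate point). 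This yields $u_\lambda\in\mathcal{N}^+_\lambda$ and $w_\lambda\in\mathcal{N}^-_\lambda$, and statement $(a)$ is immediate from the definitions of $\mathcal{N}^\pm_\lambda$. For $(b)$ I would normalize $w_\lambda=t_2\hat u$ with $\|\hat u\|=1$ and note $t_2>t_n$, where $t_n^{2p-2}=\frac{(2-q)}{(2p-q)B(\hat u)}\ge\frac{2-q}{(2p-q)C}$ by Stein--Weiss; hence $\|w_\lambda\|=t_2\ge c>0$ uniformly.

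The hard part will be showing that these constrained minimizers are genuine weak solutions, since $A(u)=\int a|u|^q$ is only continuous (not $C^1$) for $0<q<1$, so $J_\lambda\in C^0$ and the Lagrange multiplier rule is unavailable in the usual form. The plan is to exploit the one-sided monotonicity $A(u_\lambda+t\varphi)\ge A(u_\lambda)$ for $\varphi\ge0$, $t>0$, which holds because $s\mapsto s^q$ is increasing. Projecting $u_\lambda+t\varphi$ back onto $\mathcal{N}^+_\lambda$ by a unique scale $s(t)$ with $s(0)=1$ (smooth in $t$ by the implicit function theorem, legitimate since $\mathcal{N}^0_\lambda=\emptyset$), inserting into $J_\lambda(u_\lambda)\le J_\lambda(s(t)(u_\lambda+t\varphi))$, differentiating the $C^1$ part at $t=0^+$ and discarding the singular part with the correct sign, I expect to obtain $\langle u_\lambda,\varphi\rangle-\lambda H(u_\lambda,\varphi)-D(u_\lambda,\varphi)\ge0$ for all $\varphi\ge0$; in particular $H(u_\lambda,\varphi)<\infty$ and $u_\lambda$ is a positive supersolution, so $u_\lambda>0$ a.e. by the strong maximum principle. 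The final upgrade from inequality to equality is the delicate step: testing against the positive part $(u_\lambda+\varepsilon\psi)^+$ for arbitrary $\psi\in X$, splitting the integrals on $\{u_\lambda+\varepsilon\psi<0\}$ (where $0\le u_\lambda<\varepsilon|\psi|$) and letting $\varepsilon\to0^+$, yields the reverse inequality and hence the weak formulation of Definition~\ref{sol fraca do probLema}. The same argument applies verbatim to $w_\lambda$ on $\mathcal{N}^-_\lambda$, as the singular estimate is insensitive to the concavity sign of the fibering map.

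Finally I would assemble $(c)$--$(e)$. Since $\phi_{\lambda,u}$ decreases from $0^-$ on $(0,t_1)$, the local minimum value is negative, so $J_\lambda(u_\lambda)=C_{\mathcal{N}^+_\lambda}<0$; and because $\phi_{\lambda,u}(t_1)\le\phi_{\lambda,u}(t_2)$ in every direction one has $C_{\mathcal{N}^+_\lambda}\le C_{\mathcal{N}^-_\lambda}$, while every nontrivial solution lies in $\mathcal{N}_\lambda=\mathcal{N}^+_\lambda\cup\mathcal{N}^-_\lambda$, so $u_\lambda$ is a ground state, giving $(c)$. For $(d)$, the pointwise derivative $\partial_\lambda\phi_{\lambda,u}(t)=-\frac{t^q}{q}A(u)<0$ makes each local extremal value decreasing in $\lambda$, hence so are the two infima; continuity follows because $\lambda\mapsto J_\lambda(u_\lambda)$ and $\lambda\mapsto J_\lambda(w_\lambda)$ are infima of families of affine-in-$\lambda$ maps, hence concave, together with the compactness/stability of minimizers. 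For $(e)$ the key is the identity $\phi_{\lambda,u}(t)=\frac{t^q A(u)}{q}\big(R_e(tu)-\lambda\big)$, so that $\operatorname{sign}J_\lambda(v)=\operatorname{sign}\big(R_e(v)-\lambda\big)$; since any $v\in\mathcal{N}^-_\lambda$ sits at its own fibering maximum, $J_\lambda(v)=\max_{t>0}\phi_{\lambda,v}(t)$ carries the sign of $\max_{t>0}R_e(tv)-\lambda$. When $\lambda<\lambda_*$ this is positive for every $v\in\mathcal{N}^-_\lambda$, so $J_\lambda(w_\lambda)>0$; when $\lambda\in(\lambda_*,\lambda^*)$ one may pick $u$ with $\max_{t>0}R_e(tu)<\lambda$, whose $\mathcal{N}^-_\lambda$-point then has negative energy, forcing $C_{\mathcal{N}^-_\lambda}<0$; and the case $\lambda=\lambda_*$ follows from the continuity in $(d)$, squeezing $J_{\lambda_*}(w_{\lambda_*})=0$. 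This completes $(e)$ and the theorem.
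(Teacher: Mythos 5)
Your proposal is correct and follows essentially the same route as the paper: the nonlinear Rayleigh quotient with the explicit ratio $\lambda_*=\frac{\widetilde{C}_{p,q}}{C_{p,q}}\lambda^*=\frac{q}{2}p^{(2-q)/(2p-2)}\lambda^*\in(0,\lambda^*)$, emptiness of $\mathcal{N}_\lambda^0$ below $\lambda^*$ via the two fibering projections, coercive minimization on $\mathcal{N}_\lambda^\pm$ with the compact embeddings from $(V_1)$--$(V_2)$, the one-sided perturbation/reprojection plus $(u_\lambda+\varepsilon\varphi)^+$-truncation to recover the weak formulation despite $A\notin C^1$, and the sign identity $\phi_{\lambda,u}(t)=\frac{t^qA(u)}{q}\bigl(R_e(tu)-\lambda\bigr)$ for item $(e)$. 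Two caveats, both repairable by the paper's own standard arguments and not changing the approach: the reprojection $s(t)$ is only continuous in $t$, not smooth (the implicit function theorem must be applied in the variables $\bigl(\Vert\cdot\Vert^2,A,B\bigr)$ precisely because $A$ is not differentiable, as in the paper's Lemma \ref{L1}), and the strong convergence of minimizing sequences does not follow from ``passing the Nehari identity to the limit'' alone---one needs the strict weak lower semicontinuity of $u\mapsto J_\lambda'(u)u$ together with the projection comparison that beats the infimum, as in Lemmas \ref{J(u)=C_ N+1} and \ref{J(u)=C_ N-1}.
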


Now, using the previous results, we consider an auxiliary sequence $(\lambda_k)$ such that $\lambda_k \to \lambda^*$ as $k \to \infty$. The main point here is to guarantee that Problem \eqref{P1} does not admit weak solutions in the set $\mathcal{N}_{\lambda^*}^0$. Under these conditions, we are able to prove the following result:

 \begin{theorem}\label{teorema 2}
 Suppose ($H_0$), ($H_1'$), ($H_2'$), ($V_1$) and ($V_2$). Assume also that $\lambda = \lambda^*$. Then Problem \eqref{P1} has at least two positive solutions $w_{\lambda^*}\in \mathcal{N}_{\lambda^*}^-$ and $u_{\lambda^*}\in \mathcal{N}_{\lambda^*}^+.$
\end{theorem}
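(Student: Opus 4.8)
The plan is to obtain the two solutions at the critical value $\lambda = \lambda^*$ as strong limits of the solutions already produced by Theorem~\ref{teorema 1} for subcritical parameters. First I would fix a sequence $(\lambda_k) \subset (0,\lambda^*)$ with $\lambda_k \uparrow \lambda^*$. For each $k$, Theorem~\ref{teorema 1} furnishes positive weak solutions $u_k := u_{\lambda_k} \in \mathcal{N}_{\lambda_k}^+$ and $w_k := w_{\lambda_k} \in \mathcal{N}_{\lambda_k}^-$, each being a critical point of $J_{\lambda_k}$. The candidates for the limiting solutions are then the weak limits of $(u_k)$ and $(w_k)$, and the whole argument reduces to controlling these two approximating sequences.

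The second step is to establish uniform bounds and extract convergent subsequences. Using the Nehari identity $\phi'_{\lambda_k,u_k}(1)=0$ together with the sign of $\phi''_{\lambda_k,u_k}(1)$, one controls $\Vert u_k\Vert$ and $\Vert w_k\Vert$ from above; here the monotonicity and continuity of $\lambda\mapsto J_\lambda(u_\lambda)$ and $\lambda\mapsto J_\lambda(w_\lambda)$ from Theorem~\ref{teorema 1}(d) keep the energies bounded as $\lambda_k\to\lambda^*$, while Theorem~\ref{teorema 1}(b) prevents $w_k$ from collapsing. Passing to subsequences, $u_k\rightharpoonup u_{\lambda^*}$ and $w_k\rightharpoonup w_{\lambda^*}$ weakly in $X$. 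The coercivity hypotheses $(V_1)$--$(V_2)$ yield compact embeddings of $X$ into the weighted Lebesgue spaces governing $A$ and $B$, so I would upgrade weak to strong convergence by testing the equations against $u_k-u_{\lambda^*}$ and $w_k-w_{\lambda^*}$, exploiting a Br\'ezis--Lieb splitting for the Stein--Weiss term $B$ together with the compactness coming from the potential. This lets $A$, $B$, $H$ and $D$ pass to the limit, so that $u_{\lambda^*}$ and $w_{\lambda^*}$ are weak solutions of \eqref{P1} at $\lambda=\lambda^*$ in the sense of Definition~\ref{sol fraca do probLema}; in particular each limit satisfies $\phi'_{\lambda^*,\,\cdot\,}(1)=0$ and hence lies in $\mathcal{N}_{\lambda^*}$.

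The heart of the argument, and the main obstacle, is to certify that the limits land in the correct components $\mathcal{N}_{\lambda^*}^+$ and $\mathcal{N}_{\lambda^*}^-$ rather than in the degenerate set $\mathcal{N}_{\lambda^*}^0$, which is nonempty precisely at $\lambda^*$. By strong convergence and the continuity of $(\lambda,u)\mapsto\phi''_{\lambda,u}(1)$, the strict inequalities $\phi''_{\lambda_k,u_k}(1)>0$ and $\phi''_{\lambda_k,w_k}(1)<0$ pass to the limit only as the weak inequalities $\phi''_{\lambda^*,u_{\lambda^*}}(1)\geq 0$ and $\phi''_{\lambda^*,w_{\lambda^*}}(1)\leq 0$. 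To rule out equality I would invoke the nonexistence statement announced in the introduction: there is no weak solution of \eqref{P1} at $\lambda=\lambda^*$ lying in $\mathcal{N}_{\lambda^*}^0$. Since each limit is already a weak solution belonging to $\mathcal{N}_{\lambda^*}$, this forces the strict inequalities, placing $u_{\lambda^*}\in\mathcal{N}_{\lambda^*}^+$ and $w_{\lambda^*}\in\mathcal{N}_{\lambda^*}^-$. The delicate part is the nonexistence itself, whose proof rests on fine estimates separating the singular exponent $q$ from the nonlocal exponent $p$ and uses $(H_1')$ and $(H_2')$ to show that a genuine solution cannot simultaneously satisfy $\phi'_{\lambda^*,u}(1)=0$ and $\phi''_{\lambda^*,u}(1)=0$.

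Finally, I would record that the two solutions are nonzero, distinct and positive. The lower bound inherited from Theorem~\ref{teorema 1}(b) passes to the limit and gives $\Vert w_{\lambda^*}\Vert\geq c>0$, while Theorem~\ref{teorema 1}(c) together with the monotonicity of the energy map yields $J_{\lambda^*}(u_{\lambda^*})<0=J_{\lambda^*}(0)$, so $u_{\lambda^*}\neq 0$. Distinctness is then immediate since $\mathcal{N}_{\lambda^*}^+$ and $\mathcal{N}_{\lambda^*}^-$ are disjoint. Positivity follows because each $u_k,w_k$ lies in $X_+$ and solves an equation whose right-hand side is strictly positive, so the strong limits remain nonnegative, and a standard argument exploiting the singular term $\lambda a(x)u^{q-1}$ upgrades this to $u_{\lambda^*},w_{\lambda^*}>0$ a.e. in $\mathbb{R}^N$, completing the proof.
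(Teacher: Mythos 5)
Your overall strategy is the same as the paper's: the paper's Theorem \ref{salva} also fixes $(\lambda_k)\subset(0,\lambda^*)$ with $\lambda_k\to\lambda^*$, takes the solutions $u_{\lambda_k}\in\mathcal{N}^+_{\lambda_k}$, $w_{\lambda_k}\in\mathcal{N}^-_{\lambda_k}$ from Theorem \ref{teorema 1}, proves boundedness and then \emph{strong} convergence (Proposition \ref{C_n decrescente}), concludes $\phi''_{\lambda^*,u_{\lambda^*}}(1)\geq 0$ and $\phi''_{\lambda^*,w_{\lambda^*}}(1)\leq 0$, and finally excludes the degenerate set via the nonexistence result (Proposition \ref{sem sol}, which rests exactly on $(H_1')$, i.e.\ $a\notin L^1(\mathbb{R}^N)$, combined with a Stein--Weiss estimate). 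So the skeleton, including the role of $(H_1')$ and $(H_2')$, matches the paper.

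There is, however, a genuine gap in your middle step: the assertion ``this lets $A$, $B$, $H$ and $D$ pass to the limit, so that $u_{\lambda^*}$ and $w_{\lambda^*}$ are weak solutions'' cannot be carried out as stated for the singular term. For a fixed sign-changing $\varphi\in X$, the term $H(w_{\lambda_k},\varphi)=\int_{\mathbb{R}^N}a(x)\,w_{\lambda_k}^{q-1}\varphi\,dx$ involves the negative power $q-1<0$: strong convergence in $X$ and the compact embeddings control $|w_{\lambda_k}|^q$ but give no dominating function for $w_{\lambda_k}^{q-1}|\varphi|$ near the zero set of the limit, and a Br\'ezis--Lieb splitting does not apply since this functional is not even continuous on $X$ (this is precisely why $J_\lambda$ is only $C^0$). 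The paper circumvents this in the proof of Theorem \ref{salva} by a different route: first apply Fatou's Lemma to the identities $\left<w_{\lambda_k},\varphi\right>-D(w_{\lambda_k},\varphi)=\lambda_k H(w_{\lambda_k},\varphi)$ for $\varphi\in X_+$ only, obtaining the one-sided inequality \eqref{w-posi}, which \emph{simultaneously} forces $w_{\lambda^*}>0$ a.e.\ and $a\,w_{\lambda^*}^{q-1}\varphi\in L^1(\mathbb{R}^N)$; then recover the equality for arbitrary $\varphi\in X$ via the truncation $\Psi=(w_{\lambda^*}+\varepsilon\varphi)^+$, the Nehari identity $\phi'_{\lambda^*,w_{\lambda^*}}(1)=0$, and dominated convergence on the sets $\{w_{\lambda^*}+\varepsilon\varphi\leq 0\}$ as $\varepsilon\to 0$. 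Note in particular that your final step (positivity via the singular term) is an \emph{input} to the weak-solution property here, not a consequence of it, so your order of steps must be rearranged along these lines. A minor further remark: for the strong convergence itself the paper does not need Br\'ezis--Lieb; it kills $D(u_{\lambda_k},u_{\lambda_k}-u_{\tilde{\lambda}})$ directly by Stein--Weiss plus H\"older, which is where $(H_2')$ enters, and handles the $H$-term by Fatou together with a dominating function coming from the strong $L^m$ convergence.
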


It is important to stress that  $\mathcal{N}_{\lambda^*}^0$ is nonempty. Therefore, some minimizer $u$ in the Nehari set can be stayed in $\mathcal{N}_\lambda^0$. It is not sufficient for our purposes due the fact that the Lagrange Multiplier Theorem  Problem does not work anymore. Hence, assuming that $u \in \mathcal{N}_\lambda^0$ is a minimizer in the Nehari set, we do not conclude in general that $u$ is a weak solution for the Problem \eqref{P1}. As was told before the main idea is to consider a sequence $(\lambda_k)$ such that $\lambda_k \to \lambda^*$ as $k \to \infty$ where $0 < \lambda_k < \lambda^*$. Therefore, we consider two sequences $(u_k) \in \mathcal{N}_{\lambda_k}^-$ and  $(v_k) \in \mathcal{N}_{\lambda_k}^+$. Under these conditions, we show that Problem \eqref{P1} has at least two solutions using fine estimates and the sequences defined just above.

\subsection{Notations}
In the present work, we shall use the following notations:
\begin{itemize}
    \item Let $f:\R^N\to \R$ be a measurable function, we will denote by $[f\lesseqgtr 0]$ the set $\{x\in\R:f(x)\lesseqgtr 0\}$;
    \item The norm in $L^r(\R^N)$ will denote by $\|.\|_r$ for each $~1\leq r\leq \infty$;
    \item Let us denote by $S_r$ the best constant for the embedding $H^1(\R^N)$;
    \item $r'=r/(r-1)$ denotes the dual number of $r$;
    \item $C_1,C_2,\dots$ denote cumulative constants;
\end{itemize}
\subsection{Outline} This work is organized as follows: In the Section \ref{preli} we present some preliminaries which are fundamental tools to develop our main results. In Section \ref{RQ-rev} we consider some results taking into account the nonlinear Rayleigh quotient proving some technical results. Section \ref{sec-teo1} is devoted to the proof of the Theorem \ref{teorema 1}. In the last section, we prove the Theorem \ref{teorema 2}.

\section{Preliminaries}\label{preli}

Throughout this work, we consider an essential tool in order to get some results. More specifically, we consider the Stein-Weiss inequality and the weighted Hardy-Littlewood-Sobolev inequality as was given in \cite{minbo, Carlos, Beckner, HLSPONDERADA}. Namely, we can state the following result:
\begin{prop}[Stein-Weiss Inequality]\label{Stein-Weiss}
Let $N\geq 3,~s_1,r_1>1,\;\alpha\geq 0,~\mu>0$ such that $0<2\alpha+\mu< N,\;\; f \in L^{r_1}(\mathbb R^N)\;\;\hbox{and}\;\; h \in L^{s_1}(\mathbb R^N),$ where $r_1'$ and $s_1'$ are the conjugate exponents of $r_1>1$ and $s_1>1,$ respectively.
Then, there exists a constant $C=C(\alpha,\mu,N,s_1,r_1)$, such that
\begin{equation}\label{SW}
\displaystyle\int_{\mathbb R^N}\int_
{\mathbb R^N}\frac{f(x)h(y)}{\vert x \vert^{\alpha}\vert x-y \vert^{\mu}\vert y \vert ^{\alpha}}dxdy\leq C(\alpha,\mu,N,s_1,r_1)\Vert f \Vert_{r_1} \Vert h \Vert_{s_1},
\end{equation}
\hbox{where}
\begin{equation}\label{des1}
0\leq\alpha<\min\left\{\displaystyle\frac{N}{r_1'},\displaystyle\frac{N}{s_1'}\right\}\;\;\hbox{and}\;\;\;\frac{1}{r_1}+\frac{1}{s_1}+\frac{2\alpha+\mu}{N}=2.
\end{equation}
\end{prop}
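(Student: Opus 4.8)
The plan is to deduce \eqref{SW} from the boundedness of an associated weighted convolution operator. First I would reduce to the case $f,h\ge 0$ (replacing $f,h$ by $\vert f\vert,\vert h\vert$ only increases the left-hand side), and then pass, by duality, to the linear operator
\[
(Th)(x)=\vert x\vert^{-\alpha}\int_{\mathbb R^N}\frac{\vert y\vert^{-\alpha}h(y)}{\vert x-y\vert^{\mu}}\,dy ,
\]
so that \eqref{SW} becomes equivalent to the bound $\Vert Th\Vert_{r_1'}\le C\Vert h\Vert_{s_1}$, since $\int_{\mathbb R^N}(Th)(x)f(x)\,dx\le \Vert Th\Vert_{r_1'}\Vert f\Vert_{r_1}$ by H\"older. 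Before estimating, I would record that the kernel $K(x,y)=\vert x\vert^{-\alpha}\vert x-y\vert^{-\mu}\vert y\vert^{-\alpha}$ is homogeneous of degree $-(2\alpha+\mu)$ and that, under the dilation $h(\cdot)\mapsto h(\cdot/\tau)$, both sides of the desired operator bound scale identically precisely because of the balance condition $\tfrac{1}{r_1}+\tfrac{1}{s_1}+\tfrac{2\alpha+\mu}{N}=2$ in \eqref{des1}. This shows the estimate is scale critical and explains why no constant beyond the listed parameters can appear.

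The core step is the weak-type estimate $\Vert Th\Vert_{L^{r_1',\infty}}\le C\Vert h\Vert_{s_1}$, which I would obtain by the distribution-function method. Fixing a level and a truncation radius $\rho>0$, I would split the Riesz kernel as $\vert x-y\vert^{-\mu}=\vert x-y\vert^{-\mu}\mathbf 1_{\{\vert x-y\vert\le\rho\}}+\vert x-y\vert^{-\mu}\mathbf 1_{\{\vert x-y\vert>\rho\}}$. The far part is controlled in $L^{\infty}$ (or via H\"older against $\Vert h\Vert_{s_1}$) and the near part controls the measure of the super-level set; optimizing over $\rho$ as a function of the level then yields the weak-type bound with the exponent dictated by \eqref{des1}. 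The two weights are incorporated by localizing $x$ and $y$ to dyadic annuli $\{2^{j}\le\vert\cdot\vert<2^{j+1}\}$: on each annulus $\vert x\vert^{-\alpha}$ and $\vert y\vert^{-\alpha}$ are essentially constant, and their integrability against the relevant $L^{p}$ factor is guaranteed exactly by the constraints $\alpha<N/r_1'$ and $\alpha<N/s_1'$, while $2\alpha+\mu<N$ ensures the kernel is locally integrable near the three singular sets $\{x=0\}$, $\{y=0\}$ and $\{x=y\}$.

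Finally, to upgrade weak type to the strong-type bound \eqref{SW}, I would invoke the Marcinkiewicz interpolation theorem: perturbing the pair $(r_1,s_1)$ slightly along the line $\tfrac1{r_1}+\tfrac1{s_1}+\tfrac{2\alpha+\mu}{N}=2$ produces, for the same fixed operator $T$, two nearby weak-type estimates, and interpolation between them returns the strong bound at the given exponents, which are interior to the admissible range because $r_1,s_1>1$ and the inequalities in \eqref{des1} are strict. I expect the main obstacle to be the scale criticality: each individual region integral is only marginally (logarithmically) convergent, so a direct H\"older estimate on a fixed decomposition of $\mathbb R^N\times\mathbb R^N$ does not close, and it is precisely the weak-type plus interpolation machinery (equivalently, an argument in Lorentz spaces) that captures this borderline. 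Keeping track of the simultaneous singularities of the three factors, and checking that each of $\alpha<N/r_1'$, $\alpha<N/s_1'$ and $2\alpha+\mu<N$ is used exactly where local integrability is needed, is the delicate bookkeeping that makes the summation over annuli converge.
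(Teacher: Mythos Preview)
The paper does not give a proof of this proposition: it is stated in the preliminaries as a known tool and attributed to the references \cite{minbo, Carlos, Beckner, HLSPONDERADA}. So there is no ``paper's own proof'' to compare against; the authors simply invoke the Stein--Weiss inequality as a black box.

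Your outline---reduce to $f,h\ge 0$, dualize to the operator $T$, obtain a weak-type bound by splitting the Riesz kernel and localizing the weights on dyadic annuli, then apply Marcinkiewicz interpolation along the scaling line---is indeed the classical route (this is essentially how Stein and Weiss proceed in \cite{Stein_weiss1958}, and how the weighted Hardy--Littlewood--Sobolev inequality is typically derived). The scaling check and the identification of which strict inequality in \eqref{des1} governs which local integrability are correct observations. As a sketch it is sound; if you were to turn it into a full proof you would still need to carry out the dyadic bookkeeping carefully, since the double sum over annuli in $x$ and $y$ must be shown to converge geometrically (the usual trick is to split according to whether $\vert x\vert\lesssim\vert y\vert$ or $\vert y\vert\lesssim\vert x\vert$ and reduce to a one-parameter sum). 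But none of this is required for the present paper, which only uses the inequality, not its proof.
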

\begin{prop}[Weighted Hardy-Littlewood-Sobolev Inequality]\label{DHLSP}
Let $s_1,r'_1>1,\;\alpha\geq 0,~\mu>0$ such that $0<2\alpha+\mu\leq N$ be fixed. Assume also that $ h \in L^{s_1}(\mathbb R^N)$. Then, there exists a constant $C=C(r_1',s_1,\alpha,N,\mu),$ such that
\begin{equation}\label{HLSP}
\left\Vert \displaystyle\int_{\mathbb R^N}\displaystyle\frac{h(y)}{\vert x\vert^{\alpha}\vert y-x\vert^{\mu}\vert y\vert^{\alpha}}dy\right\Vert_{r_1'}\leq C(r_1',s_1,\alpha,N,\mu)\Vert h\Vert_{s_1},
\end{equation}
with the following relations
\begin{equation}\label{des2}
1+\displaystyle\frac{1}{r_1'}=\displaystyle\frac{2\alpha+\mu}{N}+\displaystyle\frac{1}{s_1}\;\;\hbox{and}\;\;0\leq\alpha<\min\left\{\displaystyle\frac{N}{s_1'},\displaystyle\frac{N}{r_1'}\right\}.
\end{equation}
\end{prop}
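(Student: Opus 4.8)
The plan is to obtain this weighted Hardy--Littlewood--Sobolev inequality as the dual formulation of the Stein--Weiss inequality already recorded in Proposition~\ref{Stein-Weiss}. Introduce the linear operator $T$ acting on $h \in L^{s_1}(\mathbb{R}^N)$ by
\begin{equation*}
(Th)(x) = \int_{\mathbb{R}^N} \frac{h(y)}{|x|^\alpha |y-x|^\mu |y|^\alpha}\, dy,
\end{equation*}
so that the assertion \eqref{HLSP} is exactly the boundedness estimate $\|Th\|_{r_1'} \le C\|h\|_{s_1}$. Since the conjugate exponent of $r_1'$ is $r_1$, the $L^{r_1'}$ norm is recovered by the duality pairing against functions in $L^{r_1}(\mathbb{R}^N)$, and the whole argument consists in testing $Th$ against such functions and recognizing the Stein--Weiss bilinear form.

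First I would reduce to the case $h \ge 0$: the kernel is nonnegative, whence $|(Th)(x)| \le (T|h|)(x)$ pointwise, and replacing $h$ by $|h|$ changes neither side of \eqref{HLSP}. For nonnegative $g$ the identity
\begin{equation*}
\|g\|_{r_1'} = \sup\left\{ \int_{\mathbb{R}^N} g(x)\, f(x)\, dx : f \ge 0,\ \|f\|_{r_1} \le 1 \right\}
\end{equation*}
holds as an equality in $[0,\infty]$, so it suffices to bound the supremum. Fixing such an $f$ and applying Tonelli's theorem to the nonnegative integrand, the pairing becomes
\begin{equation*}
\int_{\mathbb{R}^N} (Th)(x)\, f(x)\, dx = \int_{\mathbb{R}^N}\int_{\mathbb{R}^N} \frac{f(x)\, h(y)}{|x|^\alpha |x-y|^\mu |y|^\alpha}\, dx\, dy,
\end{equation*}
which is precisely the quantity estimated in Proposition~\ref{Stein-Weiss} and is therefore bounded by $C(\alpha,\mu,N,s_1,r_1)\|f\|_{r_1}\|h\|_{s_1}$. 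Taking the supremum over $f$ yields \eqref{HLSP} and, as a by-product, the finiteness of $\|Th\|_{r_1'}$.

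The decisive verification is that the hypotheses of the two propositions are compatible, which is pure exponent bookkeeping. The test function $f$ lives in $L^{r_1}$ with $1/r_1 + 1/r_1' = 1$; substituting $1/r_1' = 1 - 1/r_1$ into the scaling relation \eqref{des2}, namely $1 + 1/r_1' = (2\alpha+\mu)/N + 1/s_1$, gives
\begin{equation*}
\frac{1}{r_1} + \frac{1}{s_1} + \frac{2\alpha+\mu}{N} = 2,
\end{equation*}
which is exactly the scaling condition of \eqref{des1} required by Stein--Weiss; moreover the admissibility constraint $0 \le \alpha < \min\{N/s_1', N/r_1'\}$ in \eqref{des2} coincides verbatim with the one in \eqref{des1}. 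Hence every hypothesis needed to invoke Proposition~\ref{Stein-Weiss} is supplied by the assumptions of the present statement.

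The step I expect to be the main obstacle is the endpoint $2\alpha + \mu = N$, which is admitted in the present proposition but excluded from Proposition~\ref{Stein-Weiss} (stated there for $0 < 2\alpha+\mu < N$). This borderline case is not reached by the duality argument above and would require a separate treatment, for instance a limiting argument as $2\alpha + \mu \uparrow N$ together with Fatou's lemma, or a direct appeal to the weighted Hardy--Littlewood--Sobolev theory. Away from this endpoint the proof is entirely mechanical once the reduction to $h \ge 0$ legitimizes the duality characterization, so all the real content is carried by the Stein--Weiss inequality itself.
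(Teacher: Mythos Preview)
The paper does not supply its own proof of this proposition; it is recorded in Section~\ref{preli} as a preliminary fact drawn from the literature (with references to \cite{minbo, Carlos, Beckner, HLSPONDERADA}) and is used without argument. So there is no in-paper proof to compare against.

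Your duality argument is correct and is in fact the standard way of passing between the bilinear Stein--Weiss inequality and the operator-norm formulation: the two are equivalent via $L^p$-duality, and your exponent bookkeeping showing that the scaling condition in \eqref{des2} rewrites as the one in \eqref{des1} is accurate. The reduction to $h \ge 0$ is the right device to make the duality identity $\|g\|_{r_1'} = \sup_{\|f\|_{r_1}\le 1}\int gf$ hold unconditionally in $[0,\infty]$, and the constraint $\alpha < N/r_1'$ forces $r_1' < \infty$, hence $r_1 > 1$, so Proposition~\ref{Stein-Weiss} is indeed applicable.

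Your flagging of the endpoint $2\alpha+\mu = N$ is also correct: Proposition~\ref{Stein-Weiss} as stated excludes it, so the duality route does not reach it. For the purposes of the paper this is immaterial, since the standing hypothesis $(H_0)$ imposes the strict inequality $0 < 2\alpha + \mu < N$, and the proposition is never invoked at the endpoint; as a self-contained proof of the statement exactly as written, however, that case would require a separate argument from the cited sources.
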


According to \cite{BARTSCH_WANG} we obtain the following result:
\begin{lemma}\label{imerso cont e comp}
The subspace $X$ is continuously embedded into $L^q(\mathbb R^N)$ for each $q\in[2,2^*]$. Furthermore, $X$ is compactly embedded into $L^q(\mathbb R^N)$ for each$q\in[2,2^*).$
 \end{lemma}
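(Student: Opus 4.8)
The plan is to reduce the continuous embedding to the classical Sobolev embedding by means of $(V_1)$, and then to extract the compactness from the coercivity condition $(V_2)$ combined with a local Rellich--Kondrachov argument and an interpolation.

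For the continuous embedding, I would exploit $(V_1)$. Since $V_0=\inf_{\mathbb R^N}V>0$, for every $u\in X$ one has
$$\int_{\mathbb R^N}\big(|\nabla u|^2+u^2\big)\,dx\leq \max\{1,1/V_0\}\int_{\mathbb R^N}\big(|\nabla u|^2+V(x)u^2\big)\,dx=\max\{1,1/V_0\}\,\|u\|^2,$$
so that $X\hookrightarrow H^1(\mathbb R^N)$ continuously. Composing with the standard Sobolev embedding $H^1(\mathbb R^N)\hookrightarrow L^q(\mathbb R^N)$, valid for $q\in[2,2^*]$, gives the first assertion.

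For the compact embedding, I would take a sequence $(u_n)$ bounded in $X$, so that up to a subsequence $u_n\rightharpoonup u$ in $X$ (hence in $H^1$ and weakly in $L^{2^*}$, with $\|u_n-u\|_{2^*}$ uniformly bounded), and prove that $u_n\to u$ strongly in $L^2(\mathbb R^N)$. The crux is a uniform tail estimate furnished by $(V_2)$. On the superlevel set $\{V\geq M\}$ I would use
$$\int_{\{V\geq M\}}|u_n-u|^2\,dx\leq \frac{1}{M}\int_{\{V\geq M\}}V(x)|u_n-u|^2\,dx\leq \frac{1}{M}\|u_n-u\|^2\leq \frac{C}{M},$$
which is smaller than any prescribed $\varepsilon>0$ once $M$ is large, uniformly in $n$. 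On the complementary sublevel set $\{V<M\}$, which has finite measure by $(V_2)$, I would split further into $\{V<M\}\cap B_R$ and $\{V<M\}\setminus B_R$, where $B_R$ is the ball of radius $R$ centered at the origin: on the first piece the local Rellich--Kondrachov theorem yields $u_n\to u$ in $L^2(B_R)$, while on the second, Hölder's inequality together with $|\{V<M\}\setminus B_R|\to 0$ as $R\to\infty$ and the uniform bound on $\|u_n-u\|_{2^*}$ makes the contribution arbitrarily small. Collecting the three pieces gives $\|u_n-u\|_2\to 0$.

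Finally, for a general $q\in(2,2^*)$, I would interpolate: writing $1/q=(1-\theta)/2+\theta/2^*$ with $\theta\in(0,1)$, the inequality $\|u_n-u\|_q\leq \|u_n-u\|_2^{1-\theta}\|u_n-u\|_{2^*}^{\theta}$, combined with the strong $L^2$ convergence just established and the uniform $L^{2^*}$ bound, forces $u_n\to u$ in $L^q(\mathbb R^N)$. The main obstacle is precisely the uniform control of the tails at infinity: this is exactly where $(V_2)$ is indispensable, since without the finiteness of the sublevel sets mass could escape to infinity and the compactness would fail (as it indeed does for the pure Sobolev embedding $H^1(\mathbb R^N)\hookrightarrow L^q(\mathbb R^N)$).
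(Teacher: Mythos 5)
Your proof is correct. Note, however, that the paper does not prove this lemma at all: it is stated as a known result with a citation to Bartsch--Wang (\cite{BARTSCH_WANG}), so there is no internal proof to match. What you have written is, in essence, the standard argument behind that citation: $(V_1)$ gives the norm comparison $X\hookrightarrow H^1(\mathbb R^N)$ and hence the continuous embeddings up to $2^*$, while $(V_2)$ supplies the uniform tail control via the splitting $\{V\geq M\}$ (where the potential term in the norm beats the measure) versus the finite-measure sublevel set $\{V<M\}$, handled by local Rellich--Kondrachov plus a H\"older estimate on $\{V<M\}\setminus B_R$, and interpolation then upgrades strong $L^2$ convergence to strong $L^q$ convergence for $2<q<2^*$. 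Two small points you may want to make explicit: first, $u\in X$ (needed to write $\Vert u_n-u\Vert$) because $X$ is a Hilbert space, hence weakly sequentially closed, and $\Vert u_n-u\Vert$ is then uniformly bounded; second, the local Rellich step yields convergence of the \emph{whole} extracted subsequence in $L^2(B_R)$, not merely of a further subsequence, because a compact embedding maps the weakly convergent sequence $u_n\rightharpoonup u$ in $H^1(B_R)$ to a strongly convergent one with the same limit. With those remarks your argument is complete and self-contained, which has the merit of showing exactly where each of $(V_1)$ and $(V_2)$ is used, whereas the paper's route simply outsources the lemma to the literature.
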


It is important to mention that the energy functional is well-defined. The key point here is to use the Lemma \ref{imerso cont e comp}. Furthermore, we infer also that  functional $J_\lambda$ is coercive in the Nehari set $\mathcal{N}_{\lambda}$, see for instance \cite{artigo1}. This type of result is crucial in ensuring that any minimizing sequences in the Nehari set are bounded.
It is not hard to prove the following topological properties of the energy functional $J_{\lambda}.$
\begin{lemma}\label{Jsemi contínua}
 Suppose ($H_0$), ($V_1$) and ($V_2$). Then $J_{\lambda}$ is weakly lower semicontinuous.
\end{lemma}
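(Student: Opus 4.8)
The plan is to exploit the sign structure of $J_\lambda$: the quadratic term $\tfrac12\Vert u\Vert^2$ is weakly lower semicontinuous because the norm of a Hilbert space is, while the two subtracted terms $\tfrac{\lambda}{q}A(u)$ and $\tfrac{1}{2p}B(u)$ I expect to be \emph{weakly continuous}. Since a sum of a weakly lower semicontinuous functional and finitely many weakly continuous functionals is again weakly lower semicontinuous, this will suffice. Concretely, I would fix a sequence $u_n\rightharpoonup u$ in $X$ and establish $\liminf_n\Vert u_n\Vert^2\geq \Vert u\Vert^2$, $A(u_n)\to A(u)$ and $B(u_n)\to B(u)$; then
\begin{equation*}
J_\lambda(u)\le \liminf_n \tfrac12\Vert u_n\Vert^2-\tfrac{\lambda}{q}\lim_n A(u_n)-\tfrac{1}{2p}\lim_n B(u_n)= \liminf_n J_\lambda(u_n),
\end{equation*}
which is the assertion.

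For the term $A$, I would first pass (via Lemma~\ref{imerso cont e comp}, with the exponent $2\in[2,2^*)$) to strong convergence $u_n\to u$ in $L^2(\Rn)$, hence along a subsequence to a.e.\ convergence together with an $L^2$-dominating function $g$. Using $(H_0)$, namely $a\in L^{2/(2-q)}(\Rn)$ whose conjugate exponent is $2/q$, H\"older's inequality gives $a\,|u_n|^{q}\le a\,g^{q}\in L^1(\Rn)$, so dominated convergence yields $A(u_n)\to A(u)$ along the subsequence; since this limit is independent of the subsequence, the full sequence converges.

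The delicate term is $B$, and here the Stein--Weiss inequality (Proposition~\ref{Stein-Weiss}) is the main tool. I would write $B(u)=\mathcal{B}(b|u|^p,b|u|^p)$ for the symmetric bilinear form $\mathcal{B}(f,h)=\int\int f(x)h(y)\,|x|^{-\alpha}|x-y|^{-\mu}|y|^{-\alpha}\,dx\,dy$ and apply \eqref{SW} with the symmetric choice $r_1=s_1=2N/(2N-2\alpha-\mu)=2/2_{\alpha,\mu}$, which satisfies both relations in \eqref{des1} precisely because $\mu>0$. The key arithmetic is that $p\,r_1=2p/2_{\alpha,\mu}$ lies in $(2,2^*)$: the lower bound follows from $p>2_{\alpha,\mu}$, while the upper bound follows from $p<2^*_{\alpha,\mu}$ together with $2^*_{\alpha,\mu}/2_{\alpha,\mu}=N/(N-2)$, so that $2p/2_{\alpha,\mu}<2N/(N-2)=2^*$. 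Consequently Lemma~\ref{imerso cont e comp} gives $u_n\to u$ strongly in $L^{p r_1}(\Rn)$, and since $b\in L^\infty(\Rn)$ the Nemytskii map $u\mapsto b|u|^p$ is continuous from $L^{p r_1}$ into $L^{r_1}$, so that $f_n:=b|u_n|^p\to f:=b|u|^p$ in $L^{r_1}(\Rn)$. Finally the bilinear decomposition $B(u_n)-B(u)=\mathcal{B}(f_n-f,f_n)+\mathcal{B}(f,f_n-f)$ together with the bound $|\mathcal{B}(\varphi,\psi)|\le C\Vert\varphi\Vert_{r_1}\Vert\psi\Vert_{r_1}$ from \eqref{SW} gives $|B(u_n)-B(u)|\le C\Vert f_n-f\Vert_{r_1}\bigl(\Vert f_n\Vert_{r_1}+\Vert f\Vert_{r_1}\bigr)\to 0$.

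The main obstacle is precisely the verification that the Stein--Weiss exponents can be chosen so that $p r_1$ falls into the \emph{compact} embedding range $(2,2^*)$; this is where the restriction $p\in(2_{\alpha,\mu},2^*_{\alpha,\mu})$ is used essentially. Without compactness one would only obtain weak lower semicontinuity of $B$, which is insufficient since $B$ enters $J_\lambda$ with a negative sign and must therefore be genuinely weakly continuous. Everything else reduces to standard Nemytskii-operator continuity and dominated convergence.
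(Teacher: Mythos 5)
Your proof is correct: the exponent bookkeeping checks out at every point --- the symmetric choice $r_1=s_1=2N/(2N-2\alpha-\mu)$ satisfies \eqref{des1} exactly because $\mu>0$, the window $2<pr_1<2^*$ follows from $p\in(2_{\alpha,\mu},2^*_{\alpha,\mu})$ as you compute, and the bilinear splitting of $B$ together with the dominated-convergence/subsequence treatment of $A$ (where $a\in L^{2/(2-q)}$ pairs with $|u|^q\in L^{2/q}$) is sound. The paper states this lemma without proof, but your argument assembles precisely the devices the authors use in the neighboring results (Lemma \ref{0 homogenea}, Lemma \ref{N^- longe do zero}, Proposition \ref{C_n decrescente}), namely the Stein--Weiss inequality with these symmetric exponents and the compact embedding of $X$ into $L^{pr_1}(\mathbb{R}^N)$, so it is essentially the intended proof.
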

\begin{prop}\label{J continuo1}
 Assume  ($H_0$), ($V_1$) and ($V_2$).Let $J_{\lambda}:X_+\to \mathbb R$ be defined as in \eqref{funcional energia}. Then $J_{\lambda}\in C^0(X_+, \mathbb R)$. More precisely, $A\in C^0(X_+, \mathbb R)$ and $B\in C^1(X_+,\mathbb R)$.
\end{prop}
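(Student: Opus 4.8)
The plan is to treat the three summands of $J_\lambda$ separately. The quadratic term $u\mapsto \frac12\|u\|^2$ is smooth on the Hilbert space $X$, so only $A$ and $B$ require work; once $A\in C^0(X_+,\R)$ and $B\in C^1(X_+,\R)$ are established, the conclusion $J_\lambda\in C^0(X_+,\R)$ is immediate since $C^1\subset C^0$. I will in fact argue on all of $X$ (both $A$ and $B$ depend on $u$ only through $|u|$, so they extend naturally) and then restrict to $X_+$.

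For $A$, I would first record finiteness: by H\"older with the conjugate exponents $2/(2-q)$ and $2/q$ and ($H_0$), $A(u)=\int_{\Rn}a(x)|u|^q\,dx\le \|a\|_{2/(2-q)}\,\|u\|_2^q$. To obtain continuity I would invoke the elementary inequality $\big||s|^q-|t|^q\big|\le |s-t|^q$, valid for $0<q<1$, which gives
$$
|A(u)-A(v)|\le \int_{\Rn}a(x)\,\big||u|^q-|v|^q\big|\,dx\le \|a\|_{2/(2-q)}\,\|u-v\|_2^q .
$$
Since $X\hookrightarrow L^2(\Rn)$ continuously by Lemma \ref{imerso cont e comp}, this yields $|A(u)-A(v)|\le C\|u-v\|^q$, hence $A\in C^0(X_+,\R)$ (it is even H\"older continuous). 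This step is routine.

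The substantial part is $B\in C^1$. I would fix the symmetric Stein--Weiss exponents $r_1=s_1=2N/(2N-2\alpha-\mu)$, which satisfy $1/r_1+1/s_1+(2\alpha+\mu)/N=2$ and the admissibility condition $\alpha<N/r_1'=\alpha+\mu/2$ of Proposition \ref{Stein-Weiss} (here $\mu>0$ is used). A direct computation shows $p\,r_1\in(2,2^*)$ for $p\in(2_{\alpha,\mu},2^*_{\alpha,\mu})$, so with $b\in L^\infty(\Rn)$ and $X\hookrightarrow L^{p r_1}(\Rn)$ one gets $\|b|u|^p\|_{r_1}\le\|b\|_\infty\|u\|_{p r_1}^p\le C\|u\|^p$; Proposition \ref{Stein-Weiss} then gives $B(u)\le C\|u\|^{2p}$, so $B$ is well defined and bounded on bounded sets. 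Writing $B(u)=\langle\langle g(u),g(u)\rangle\rangle$ with $g(u)=b|u|^p$ and $\langle\langle f,h\rangle\rangle$ the symmetric bilinear form with the doubly weighted kernel (continuous by Stein--Weiss), symmetry turns the difference quotient into $\langle\langle t^{-1}(g(u+t\varphi)-g(u)),\, g(u+t\varphi)+g(u)\rangle\rangle$. By the mean value theorem the first slot is dominated in $L^{r_1}$ by $C(|u|^{p-1}+|\varphi|^{p-1})|\varphi|$, which lies in $L^{r_1}$ by H\"older since $u,\varphi\in L^{p r_1}$; dominated convergence together with continuity of the bilinear form then yields the G\^ateaux derivative $B'(u)\varphi=2p\,D(u,\varphi)$.

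To upgrade this to $B\in C^1$ I would prove that $u\mapsto B'(u)$ is continuous into $X^*$. For $u_n\to u$ in $X$ and $\|\varphi\|\le 1$ I would estimate $D(u_n,\varphi)-D(u,\varphi)$ by adding and subtracting a middle term and applying Stein--Weiss to each piece, so that the convergence is controlled by $\big\||u_n|^{p-2}u_n-|u|^{p-2}u\big\|_{p r_1/(p-1)}$ and $\big\||u_n|^p-|u|^p\big\|_{r_1}$; both tend to $0$ by continuity of the Nemytskii operators $s\mapsto|s|^{p-2}s$ and $s\mapsto|s|^p$ between the relevant Lebesgue spaces, while the remaining factors stay uniformly bounded in $\varphi$, giving $\|B'(u_n)-B'(u)\|_{X^*}\to 0$. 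The main obstacle I anticipate is precisely this last step: obtaining the estimate \emph{uniformly} over $\|\varphi\|\le 1$, i.e. passing from G\^ateaux to continuous Fr\'echet differentiability through the nonlocal double integral. The decisive point is that the symmetric choice $r_1=s_1$ makes both arguments of the form land in $L^{r_1}$, reducing the matter to H\"older inequalities and Nemytskii continuity, after which Proposition \ref{Stein-Weiss} closes the argument.
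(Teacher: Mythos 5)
Your proposal is correct. The paper itself states this proposition without proof (prefacing it with ``It is not hard to prove\dots'' and implicitly deferring to the companion work \cite{artigo1}), so there is no in-paper argument to compare against; your proof supplies exactly the standard route, and with the same ingredients the paper deploys elsewhere: the $q$-H\"older bound $\bigl||s|^q-|t|^q\bigr|\le|s-t|^q$ with H\"older exponents $\tfrac{2}{2-q},\tfrac{2}{q}$ for $A$, and for $B$ the symmetric Stein--Weiss exponents $r_1=s_1=\tfrac{2N}{2N-2\alpha-\mu}$ with $2<pr_1<2^*$ and $\alpha<N/r_1'=\alpha+\mu/2$, which are precisely the choices made in Lemma \ref{0 homogenea}, Lemma \ref{N^- longe do zero} and Proposition \ref{sem sol}. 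All steps check out, including the symmetric-bilinear-form splitting of the difference quotient, the mean-value domination in $L^{r_1}$, and the G\^ateaux-to-Fr\'echet upgrade via continuity of $u\mapsto B'(u)$ in $X^*$; note only that since $2_{\alpha,\mu}<2$ the exponent $p$ may lie in $(1,2)$, in which case your bound $(|u|+|\varphi|)^{p-1}\le |u|^{p-1}+|\varphi|^{p-1}$ follows from subadditivity rather than convexity, and the Nemytskii continuity of $s\mapsto|s|^{p-2}s$ from $L^{pr_1}$ to $L^{pr_1/(p-1)}$ still holds for all $p>1$, so nothing breaks.
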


\section{Generalized nonlinear Rayleigh quotient method}\label{RQ-rev}
In this section, we shall study the nonlinear Rayleigh quotient for the Problem \eqref{P1}. To this end, we will apply some ideas discussed \cite{YAVDAT2017}. The main idea is to find extreme values of parameter $\lambda>0$ such that the Nehari method can be applied.

At this stage, we shall consider the functionals $R_e, R_n$ associated with the parameter $\lambda>0$ defined in \eqref{R_ne2}. Hence, by using \eqref{B5} and \eqref{B6}, we need to understand the behavior of the fibering maps for the functionals $R_n$ and $R_e$. These facts allow us to ensure the existence of a ground-state solution and a bound-state solution for our main problem.
Notice also that \eqref{B5} is crucial in our arguments which shows that there exist two projections in the Nehari sets. Namely, there exists a unique projection in the set $\mathcal{N_{\lambda}}^-$ and another one in the set $\mathcal{N_{\lambda}}^+$ for each $u\in X_+\backslash \{0\}$ where $\lambda \in (0,\lambda^*)$. The statement given in \eqref{B6} is used in order to determine the sign of the energy for solutions provided by the set $\mathcal{N_{\lambda}}^-$. It is not hard to see that $R_e, R_n$ belongs to $C^0(X_+;\mathbb R)$. Under these conditions, we consider the following assertions:
\begin{rmk}\label{relação entre R_n e J'1}
 Suppose ($H_0$), ($V_1$) and ($V_2$). Then, by using \eqref{B5}, we obtain the following statements:
\begin{itemize}
    \item[$i)$] $R_n(u)=\lambda$ if, and only if, $J_{\lambda}'(u)u=0$,
\item[$ii)$] $R_n(u)>\lambda$ if, and only if, $J'_{\lambda}(u)u>0$,
\item[$iii)$] $R_n(u)<\lambda$ if, and only if, $J_{\lambda}'(u)u<0$.
\end{itemize}
\end{rmk}
\begin{rmk}\label{relação entre R_e e J1}
  Suppose ($H_0$), ($V_1$) and ($V_2$). Then, by using \eqref{B6}, we obtain the following statements:
\begin{itemize}
\item[$i)$] $R_e(u)=\lambda$ if, and only if, $J_{\lambda}(u)=0$,
\item[$ii)$] $R_e(u)>\lambda$ if, and only if, $J_{\lambda}(u)>0$,
\item[$iii)$] $R_e(u)<\lambda$ if, and only if, $J_{\lambda}(u)<0$.
\end{itemize}
\end{rmk}
Consider $u\in X_+$, the fibering map related to the $R_n$ for each $t>0$ is given by
\begin{equation*}\label{Q_n2}
    Q_n(t)=R_n(tu)=\frac{t^{2-q}\Vert u\Vert^2-t^{2p-q}B(u)}{A(u)}.
\end{equation*}
Recall that
\begin{equation*}\label{Q'_n1}
    Q'_n(t)=\frac{(2-q)t^{1-q}\Vert u\Vert^2-(2p-q)t^{2p-q-1}B(u)}{A(u)}.
\end{equation*}
The critical point of $t \mapsto Q_n(t)$ allow us to define a function $t_\lambda:X_+\to \mathbb R$ which is given by
\begin{equation*}\label{t-lambda1}
t_n(u)=\left[\frac{(2-q)\Vert u\Vert^2}{(2p-q)B(u)}\right]^{\frac{1}{2p-2}}.
\end{equation*}
Hence, the correspondent critical value is provided by
\begin{equation*}\label{valor critico de Q_n}
Q_n(t_\lambda(u))=\frac{C_{p,q}\Vert u\Vert^{\frac{2p-q}{p-1}}}{A(u)\left[B(u)\right]^{\frac{2-q}{2p-2}}}\quad\mbox{where}\qquad C_{p,q}=\left(\frac{2-q}{2p-q}\right)^{\frac{2-q}{2p-2}}\left(\frac{2p-2}{2p-q}\right).
\end{equation*}
Now, by using $Q_n(t)$ and $Q'_n(t)$, we infer that
$$
\displaystyle\lim_{t\to 0}\frac{Q_n(t)}{t^{2-q}}>0,
\displaystyle\lim_{t\rightarrow \infty}\frac{Q_n(t)}{t^{2p-q}}<0,
\displaystyle\lim_{t\rightarrow 0}\frac{Q'_n(t)}{t^{1-q}}>0,
 \displaystyle\lim_{t\to\infty}\frac{Q'_n(t)}{t^{2p-q-1}}<0.
$$
Since $t_\lambda(u)$ is unique for each $u \in X_+$ we obtain that $Q'_n(t)>0$ for each $t\in (0,t_\lambda(u))$. Moreover, we see that $Q'_n(t)<0$ for each $t>t_\lambda(u)$ and $Q'_n(t)=0$ if, and only if, we have $t = t_\lambda(u)$.
Thus, the number $t_\lambda(u)>0$ is a global maximum point for the fibering map $ t \mapsto Q_n(t)$, i.e., we obtain that $ Q_n(t_\lambda(u))=\displaystyle\max_{t>0}Q_n(t)$.

It is important to stress that $t \mapsto Q_e(t)$ has a similar behavior. More precisely, we consider the function
\begin{equation}\label{Q_e2}
Q_e(t)=R_e(tu)=\displaystyle q\frac{\frac{1}{2}\Vert u\Vert^2t^{2-q}-\displaystyle\frac{t^{2p-q}}{2p}B(u)}{\displaystyle A(u)}.
\end{equation}
Hence, we infer that
\begin{equation*}\label{Q'_e1}
Q'_e(t)=\displaystyle q\frac{\displaystyle\frac{(2-q)}{2}t^{1-q}\Vert u\Vert^2-\displaystyle\frac{(2p-q)}{2p}t^{2p-q-1}B(u)}{\displaystyle A(u)}.
\end{equation*}
Now, the critical point for the function $t \mapsto Q_e(t)$ defines a function  $t_e:X_+\rightarrow \mathbb R$ for each $u \in X_+$. This critical point is given by
\begin{equation*}\label{t_e1}
t_e(u)=\left[\displaystyle\frac{p(2-q)\Vert u\Vert^2}{(2p-q)B(u)}\right]^{\frac{1}{2p-2}}.
\end{equation*}
Thus, the correspondent global maximum value is provide by
\begin{equation*}\label{valor critico de Q_e1}
Q_e(t_e(u))=\displaystyle\frac{\widetilde{C}_{p,q}\Vert u\Vert^{\frac{2p-q}{p-1}}}{A(u)\left[B(u)\right]^{\frac{2-q}{2p-2}}}\qquad\mbox{where}\qquad \widetilde{C}_{p,q}=qp^{\frac{2-q}{2p-2}}\left(\displaystyle\frac{p-1}{2p-q}\right)\left(\displaystyle\frac{2-q}{2p-q}\right)^{\frac{2-q}{2p-2}}.
\end{equation*}
Under these conditions, by using $Q_e(t)$ and $Q'_e(t)$, we deduce that
$$\lim_{t\to 0}\displaystyle\frac{Q_e(t)}{t^{2-q}}>0,~\lim_{t\to \infty}\displaystyle\frac{Q_e(t)}{t^{2p-q}}<0,~
\lim_{t\to 0}\displaystyle\frac{Q'_e(t)}{t^{1-q}}>0~\mbox{and}~\lim_{t\to \infty}\displaystyle\frac{Q'_e(t)}{t^{2p-q-1}}<0.$$
Now, we consider an important relation between the functions $Q_n$ and $Q_e$. More precisely, we obtain the following assertion:
\begin{equation}\label{Q_n-Q_e 2}
Q_n(t)-Q_e(t)=\displaystyle\frac{t}{q}Q'_e(t),~t>0
\end{equation}
As a consequence, by using \eqref{Q_n-Q_e 2}, we deduce the following result:
\begin{prop}\label{Q_n e Q_e 2}
Consider the fibering functions be $Q_n$ and $Q_e$ provided by \eqref{Q_n2} and \eqref{Q_e2}. Then we obtain that
\begin{itemize}
    \item[i)]$ Q'_e(t)>0\;\;\hbox{if, and only if,}\;\; Q_n(t)>Q_e(t)$, for all $t\in (0,t_e(u));$
\item[ii)]$ Q'_e(t)<0\;\;\hbox{if, and only if,}\;\; Q_n(t)<Q_e(t)$, for all $t\in (t_e(u),\infty);$
\item[iii)] $ Q'_e(t)=0\;\;\hbox{if, and only if,}\;\; Q_n(t)=Q_e(t)\;\;\hbox{if, and only if,}\;\; t=t_e(u).$
\end{itemize}
\end{prop}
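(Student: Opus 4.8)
The plan is to read off all three equivalences from the single identity \eqref{Q_n-Q_e 2} combined with the monotonicity of the fibering map $t\mapsto Q_e(t)$ that was already established before the statement. In effect, the proposition contains two separate assertions glued together: a sign correspondence between $Q_n-Q_e$ and $Q'_e$, and a location of the sign of $Q'_e$ along the half-line.

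First I would note that since $t>0$ and $q\in(0,1)$, the factor $t/q$ in \eqref{Q_n-Q_e 2} is strictly positive. Hence the identity $Q_n(t)-Q_e(t)=\frac{t}{q}Q'_e(t)$ forces $Q_n(t)-Q_e(t)$ and $Q'_e(t)$ to share the same sign for every $t>0$; that is,
\begin{equation*}
Q'_e(t)>0 \iff Q_n(t)>Q_e(t),\qquad Q'_e(t)<0 \iff Q_n(t)<Q_e(t),\qquad Q'_e(t)=0 \iff Q_n(t)=Q_e(t).
\end{equation*}
This already yields the equivalences between the sign of $Q'_e$ and the comparison of $Q_n$ and $Q_e$ in all three items, independently of where $t$ lies.

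It then remains to locate where each sign of $Q'_e$ occurs, and here I would invoke the behaviour of $t\mapsto Q_e(t)$ recorded just above the statement. From the explicit expression for $Q'_e(t)$ one sees that $Q'_e(t)=0$ has the unique positive root $t=t_e(u)$, while the limits $\lim_{t\to 0}Q'_e(t)/t^{1-q}>0$ and $\lim_{t\to\infty}Q'_e(t)/t^{2p-q-1}<0$ show that $Q'_e$ is positive near the origin and negative for large $t$. Since $t_e(u)$ is the only zero, it follows that $Q'_e(t)>0$ on $(0,t_e(u))$, $Q'_e(t)<0$ on $(t_e(u),\infty)$, and $Q'_e(t)=0$ exactly at $t=t_e(u)$, so that $t_e(u)$ is the global maximum of $Q_e$. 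Substituting these three sign determinations into the equivalences above produces items (i), (ii) and (iii) respectively.

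I expect no serious obstacle: the entire content is packaged into the algebraic identity \eqref{Q_n-Q_e 2} together with the single-crossing property of $Q'_e$. The only point that deserves a line of care is the uniqueness of the root $t_e(u)$ of $Q'_e$, which I would settle by writing $Q'_e(t)$ as $t^{1-q}$ times an affine, strictly decreasing function of $t^{2p-2}$ (the coefficient of $B(u)$ being positive because $A(u),B(u)>0$ for $u\in X_+$), whence $Q'_e(t)=0$ admits precisely one positive solution and the sign pattern is forced.
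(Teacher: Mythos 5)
Your proposal is correct and takes essentially the same route as the paper, which offers no separate proof but presents the proposition as an immediate consequence of the identity \eqref{Q_n-Q_e 2} (whose positive factor $t/q$ forces $Q_n(t)-Q_e(t)$ and $Q'_e(t)$ to share the same sign) combined with the already recorded fact that $t_e(u)$ is the unique critical point of $t\mapsto Q_e(t)$, with $Q'_e>0$ before it and $Q'_e<0$ after it. Your added verification that $Q'_e(t)$ equals $t^{1-q}$ times a strictly decreasing affine function of $t^{2p-2}$ merely makes explicit the uniqueness of the root that the paper leaves implicit.
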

Now, we consider another important relation between the function $Q'_n(t)$ and the second derivative of the energy functional. In the same way, there exists a relation between $Q'_e(t)$ and the first derivative of the energy functional, see \cite{artigo1}. These results can be summarized as follows:
\begin{equation}\label{d38}
   \displaystyle\frac{d}{dt}R_n(tu)=\displaystyle\frac{q}{t} \displaystyle\frac{J_{\lambda}''(tu)(tu,tu)}{A'(tu)tu}\qquad\mbox{and}\qquad
\frac{d}{dt}R_e(tu)=\displaystyle\frac{q}{t}\displaystyle\frac{J_{\lambda}'(tu)tu}{A(tu)},\qquad t>0.
\end{equation}
As a consequence, by using \eqref{d38}, we infer that
\begin{prop}\label{d/dtR_n(tu) e J''2}
Suppose ($H_0$), ($V_1$) and ($V_2$). Let $u \in X_+$ such that $R_n(tu)=\lambda$ hods for some $t>0$. Then, we obtain the following statements:
\begin{itemize}
\item[$i)$]
 $\displaystyle\frac{d}{dt}R_n(tu)>0\;\; \hbox{if, and only if,}\;\; J_{\lambda}''(tu)(tu,tu)>0$,
\item[$ii)$]$\displaystyle\frac{d}{dt}R_n(tu)<0 \;\; \hbox{if, and only if,}\;\; J_{\lambda}''(tu)(tu,tu)<0$,
\item[$iii)$]$\displaystyle\frac{d}{dt}R_n(tu)=0 \;\; \hbox{if, and only if,}\;\; J_{\lambda}''(tu)(tu,tu)=0$.
\end{itemize}
\end{prop}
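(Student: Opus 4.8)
The plan is to read off all three equivalences directly from the identity \eqref{d38}, once we verify that the multiplicative factor occurring there is strictly positive. Recall that the first relation in \eqref{d38} asserts
\[
\frac{d}{dt}R_n(tu)=\frac{q}{t}\,\frac{J_{\lambda}''(tu)(tu,tu)}{A'(tu)(tu)},\qquad t>0,
\]
so that $\frac{d}{dt}R_n(tu)$ and $J_{\lambda}''(tu)(tu,tu)$ carry the same sign as soon as the scalar $\frac{q}{t\,A'(tu)(tu)}$ is positive. By ($H_0$) we have $0<q<1$ and, by hypothesis, $t>0$; hence the factor $q/t$ is already positive and the whole matter reduces to proving $A'(tu)(tu)>0$.

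First I would compute $A'(tu)(tu)$ explicitly from the definition of $A$ in \eqref{AB(u)2}. Differentiating $A(u)=\int_{\R^N}a(x)|u|^{q}\,dx$ along rays gives $A'(u)\varphi=q\int_{\R^N}a(x)|u|^{q-2}u\,\varphi\,dx$, so that
\[
A'(tu)(tu)=q\,t^{q-1}\int_{\R^N}a(x)|u|^{q-2}u\,(tu)\,dx=q\,t^{q}\int_{\R^N}a(x)|u|^{q}\,dx=q\,t^{q}A(u).
\]
Since $a>0$ a.e. by ($H_0$) and $u\in X_+$ is nonnegative and nontrivial, it follows that $A(u)=\int_{\R^N}a|u|^q\,dx>0$; combined with $t>0$ this yields $A'(tu)(tu)=q\,t^{q}A(u)>0$ for every $t>0$.

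Combining the two observations, the prefactor simplifies to $\frac{q}{t\,A'(tu)(tu)}=\frac{1}{t^{1+q}A(u)}>0$, and substituting back into \eqref{d38} gives
\[
\frac{d}{dt}R_n(tu)=\frac{1}{t^{1+q}A(u)}\,J_{\lambda}''(tu)(tu,tu),\qquad t>0.
\]
Reading this identity in the positive, negative, and zero cases yields precisely the equivalences $i)$, $ii)$, and $iii)$; the hypothesis $R_n(tu)=\lambda$, i.e. $tu\in\mathcal{N}_{\lambda}$ by \eqref{B5}, only selects the point at which the conclusion is applied and plays no role in the sign comparison. I do not expect a genuine obstacle here: the only points demanding care are the strict positivity of $A(u)$, which rests on $a>0$ and the nontriviality of $u\in X_+$, and the fact that $A'(tu)(tu)$ is unambiguous even though $J_{\lambda}$ is merely $C^0$ on $X_+$ — along the ray $t\mapsto A(tu)=t^{q}A(u)$ is smooth, so the radial derivative is well defined. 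Granting \eqref{d38}, the statement is thus essentially a bookkeeping of signs.
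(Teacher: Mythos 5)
Your argument is correct and is essentially the paper's own: Proposition \ref{d/dtR_n(tu) e J''2} is obtained there precisely by reading the signs off the identity \eqref{d38}, and your explicit computation $A'(tu)(tu)=q\,t^{q}A(u)>0$ (valid since $a>0$, $u\in X_+\setminus\{0\}$, $t>0$) supplies the positivity of the prefactor that the paper leaves implicit. One substantive correction to your closing remark, however: the hypothesis $R_n(tu)=\lambda$ does \emph{not} merely select the point of application — the first identity in \eqref{d38} is itself valid only at such points. Indeed, a direct computation from \eqref{R_ne2} and \eqref{fibra de J''}, using $J''_{\lambda}(tu)(tu,tu)=t^{2}\Vert u\Vert^{2}-(q-1)\lambda t^{q}A(u)-(2p-1)t^{2p}B(u)$ and $A'(tu)(tu)=qt^{q}A(u)$, gives
\[
\frac{d}{dt}R_n(tu)-\frac{q}{t}\,\frac{J_{\lambda}''(tu)(tu,tu)}{A'(tu)(tu)}
=\frac{1-q}{t}\bigl(R_n(tu)-\lambda\bigr),
\]
so the two sides agree exactly on the set $\{t>0 : R_n(tu)=\lambda\}$ (recall $0<q<1$, so $1-q\neq 0$); by \eqref{B5} these are the $t$ with $tu\in\mathcal{N}_{\lambda}$. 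Your derivation is unaffected, since you invoke \eqref{d38} only under the stated hypothesis, but the sign equivalences $i)$–$iii)$ would genuinely fail at points where $R_n(tu)\neq\lambda$, which is why the proposition carries that assumption.
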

\begin{prop}\label{d/dtR_e(tu) e J'2}
Suppose ($H_0$), ($V_1$) and  ($V_2$). Let $u \in X_+,$ such that $R_n(tu)=\lambda$ holds for some $t>0$. Then, we deduce the following statements:
\begin{itemize}
\item[$i)$]
 $\displaystyle\frac{d}{dt}R_e(tu)>0\;\; \hbox{if, and only if,}\;\; J_{\lambda}'(tu)tu>0$,
\item[$ii)$]$\displaystyle\frac{d}{dt}R_e(tu)<0\;\; \hbox{if, and only if,}\;\; J_{\lambda}'(tu)tu<0$,
\item[$iii)$]$\displaystyle\frac{d}{dt}R_e(tu)=0 \;\; \hbox{if, and only if,}\;\; J_{\lambda}'(tu)tu=0$.
\end{itemize}
\end{prop}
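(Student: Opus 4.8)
The plan is to read off all three equivalences from the second identity recorded in \eqref{d38},
\[
\frac{d}{dt}R_e(tu)=\frac{q}{t}\,\frac{J_{\lambda}'(tu)(tu)}{A(tu)},\qquad t>0,
\]
so that the entire statement collapses to a sign comparison: once the scalar factor $q/(t\,A(tu))$ is shown to be strictly positive, $\frac{d}{dt}R_e(tu)$ and $J_{\lambda}'(tu)(tu)$ must carry the same sign, and the three items follow at once.

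First I would verify that this prefactor is well defined and positive. By ($H_0$) we have $q\in(0,1)$ and $a>0$ a.e. in $\mathbb{R}^N$, while $u\in X_+$ forces $u\geq 0$ with $u\not\equiv 0$; hence, using the $q$-homogeneity of $A$, one has $A(tu)=t^{q}A(u)$ with $A(u)=\int_{\mathbb{R}^N}a(x)|u|^{q}\,dx>0$ for every $t>0$. Combined with the standing requirements $q>0$ and $t>0$, this yields $q/(t\,A(tu))>0$, so the denominator in \eqref{d38} never degenerates along the fiber $t\mapsto tu$.

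With the positive prefactor in hand the conclusion is immediate from the displayed identity: $\frac{d}{dt}R_e(tu)>0$ holds if and only if $J_{\lambda}'(tu)(tu)>0$, which is item $i)$, and replacing ``$>$'' by ``$<$'' and by ``$=$'' gives items $ii)$ and $iii)$. This is exactly the pattern already used for Proposition \ref{d/dtR_n(tu) e J''2}, where the first identity in \eqref{d38} is combined with $A'(tu)(tu)=qA(tu)>0$. Consequently the genuine difficulty is not internal to this proposition at all, but upstream: it lies in the chain-rule identity \eqref{d38} itself and in the strict positivity $A(u)>0$. The latter is the one place where the sign condition $a>0$ from ($H_0$) together with the nondegeneracy $u\not\equiv 0$ are truly needed, since they are precisely what keep $A(tu)$ from vanishing and make the sign transfer legitimate; everything beyond that is a routine comparison of signs.
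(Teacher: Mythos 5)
Your reduction --- read the statement off the second identity in \eqref{d38} and check that the prefactor $q/(t\,A(tu))$ is strictly positive --- is exactly the deduction the paper intends: both Proposition~\ref{d/dtR_n(tu) e J''2} and the present statement appear there with no justification beyond the phrase ``by using \eqref{d38}''. Your positivity check is also fine: $A(tu)=t^{q}A(u)$ and $A(u)>0$ for $u\in X_+$ since $a>0$ a.e. Nevertheless there is a genuine gap, and it sits precisely at the step you declare routine: the second identity in \eqref{d38} is \emph{not} an identity valid for every $t>0$ and every $\lambda$. Its left-hand side does not depend on $\lambda$, while its right-hand side is strictly decreasing in $\lambda$ --- precisely because $A(tu)>0$ --- so the identity can hold only for the single value $\lambda=R_e(tu)$, that is, on the zero-energy level $J_{\lambda}(tu)=0$ (cf.\ \eqref{B6}). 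Your proof never invokes the stated hypothesis of the proposition, and in fact it could not: the printed hypothesis ``$R_n(tu)=\lambda$'' is a slip carried over from Proposition~\ref{d/dtR_n(tu) e J''2}, and the correct hypothesis, dual to that case, is $R_e(tu)=\lambda$.

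Taken with the literal hypothesis the statement is false, so any argument that never touches the hypothesis must be incomplete. Indeed, $J_{\lambda}'(tu)(tu)=t^{q}A(u)\left(R_n(tu)-\lambda\right)$, so $R_n(tu)=\lambda$ forces $J_{\lambda}'(tu)(tu)=0$; on the other hand, \eqref{Q_n-Q_e 2} gives $\frac{d}{dt}R_e(tu)=\frac{q}{t}\left(R_n(tu)-R_e(tu)\right)$, which vanishes only at $t=t_e(u)$ by Proposition~\ref{Q_n e Q_e 2}. Thus at $t=t_{\lambda}^{+}(u)$, which exists whenever $\lambda<\Lambda_n(u)$ by Proposition~\ref{analise dos lambidas}, item iii) is violated. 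The repair is short: assuming $\lambda=R_e(tu)$, the same two formulas combine into
\[
\frac{d}{dt}R_e(tu)=\frac{q}{t}\left(R_n(tu)-\lambda\right)=\frac{q}{t}\,\frac{J_{\lambda}'(tu)(tu)}{t^{q}A(u)}=\frac{q}{t}\,\frac{J_{\lambda}'(tu)(tu)}{A(tu)},
\]
which is exactly the second identity of \eqref{d38}, and from there your sign-transfer argument yields i)--iii) verbatim. So the substantive content is not the positivity of the prefactor, as you claim, but the verification of \eqref{d38} on the correct level set: that is where the hypothesis is consumed, and it is the one point your write-up (following the paper's own terseness) leaves unexamined.
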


From now on, we define the functionals $\Lambda_n,\Lambda_e: X_+\to \mathbb R$ of class $C^0$ given by
\begin{equation}\label{Lambda_n2}
\Lambda_n(u)=R_n(t_\lambda(u)u)=\frac{C_{p,q}\Vert u\Vert^{\frac{2p-q}{p-1}}}{A(u)\left[B(u)\right]^{\frac{2-q}{2p-2}}}\qquad\mbox{and}\qquad
    \Lambda_e(u)=R_e(t_e(u)u)=\frac{\widetilde{C}_{p,q}\Vert u\Vert^{\frac{2p-q}{p-1}}}{A(u)\left[B(u)\right]^{\frac{2-q}{2p-2}}}.
\end{equation}
Notice also that $$\Lambda_e(u)=\displaystyle\frac{\hbox{\~{C}}_{p,q}}{C_{p,q}}\Lambda_n(u)=C\Lambda_n(u).$$ It is not hard to verify that $C\in (0,1)$ which shows that $\hbox{\~{C}}_{p,q}<C_{p,q}$ and $\Lambda_e(u) < \Lambda_n(u)$ holds for each $u \in X_+$.

Now, we shall prove some useful properties for the functional $\Lambda_n$. Namely, we consider the following result:
\begin{lemma}\label{0 homogenea}
Suppose ($H_0$), ($V_1$) and ($V_2$). Then the function $\Lambda_n$ defined in the equation \eqref{Lambda_n2} is continuous, 0-homogeneous, i.e., $\Lambda_n(tu)=\Lambda_n(u)$ for each $u\in X_+$ and $t>0$, weakly lower semicontinuous, unbounded from above, and bounded from below. Furthermore, there exists $u\in X_+,$ such that
\begin{eqnarray}\label{lamb-star}
    \lambda^*=\inf_{v\in X_+}\Lambda_n(v)=\Lambda_n(u)>0.
\end{eqnarray}
\end{lemma}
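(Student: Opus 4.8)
The plan is to establish each property of $\Lambda_n$ separately, relying on the explicit formula
\begin{equation*}
\Lambda_n(u)=\frac{C_{p,q}\Vert u\Vert^{\frac{2p-q}{p-1}}}{A(u)\left[B(u)\right]^{\frac{2-q}{2p-2}}}
\end{equation*}
together with the embedding and integrability facts from Lemma \ref{imerso cont e comp} and the hypotheses ($H_0$), ($V_1$), ($V_2$). First I would verify \emph{continuity}: since $A$ is continuous by Proposition \ref{J continuo1} and $B$ is $C^1$, and since $A(u),B(u)>0$ for $u\in X_+$ (both integrands are strictly positive off a null set because $a,b>0$), the quotient is a continuous composition of continuous maps on $X_+$. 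The \emph{$0$-homogeneity} is a direct computation: replacing $u$ by $tu$ scales the numerator by $t^{(2p-q)/(p-1)}$, scales $A(u)$ by $t^q$ and $[B(u)]^{(2-q)/(2p-2)}$ by $t^{2p\cdot(2-q)/(2p-2)}$, and one checks that the exponents cancel, so $\Lambda_n(tu)=\Lambda_n(u)$. This is essentially the reason $\Lambda_n$ is well adapted to the fibering analysis.

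Next I would treat the two \emph{bounds}. For the \emph{lower bound}, I would estimate $A(u)$ and $B(u)$ from above by powers of $\Vert u\Vert$ using H\"older's inequality combined with the continuous embeddings $X\hookrightarrow L^r$ (for $A$, using $a\in L^{2/(2-q)}$; for $B$, using the Stein--Weiss inequality of Proposition \ref{Stein-Weiss} with $b\in L^\infty$ and the admissible exponents determined by $p\in(2_{\alpha,\mu},2^*_{\alpha,\mu})$). Substituting these upper bounds for $A(u)$ and $B(u)$ into the denominator and tracking the exponents of $\Vert u\Vert$, I expect the powers of $\Vert u\Vert$ to cancel exactly (as forced by $0$-homogeneity), leaving a strictly positive constant; this yields $\inf_{X_+}\Lambda_n>0$. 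For \emph{unboundedness from above}, I would exhibit a family along which the quotient blows up: for instance fix $u$ and note that $\Lambda_n$ is constant along rays, so I would instead pick a sequence of functions increasingly concentrated where $a$ is large but $B$ stays comparatively small, or more simply argue that since $a\notin L^1$ is \emph{not} assumed in ($H_0$) one uses a scaling/translation argument driving $A(u)$ toward zero while keeping $\Vert u\Vert$ and $B(u)$ controlled, forcing $\Lambda_n\to\infty$.

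For \emph{weak lower semicontinuity} I would argue on the level of the quotient. Take $u_n\rightharpoonup u$ weakly in $X_+$. By weak lower semicontinuity of the norm, $\Vert u\Vert^{(2p-q)/(p-1)}\le\liminf\Vert u_n\Vert^{(2p-q)/(p-1)}$, and by the compact embedding $X\hookrightarrow\hookrightarrow L^r$ for $r\in[2,2^*)$ (Lemma \ref{imerso cont e comp}) the lower-order terms converge: $A(u_n)\to A(u)$ and $B(u_n)\to B(u)$, the latter via the Stein--Weiss inequality applied to the difference together with strong convergence of the relevant $L^r$ norms. Combining these gives $\Lambda_n(u)\le\liminf\Lambda_n(u_n)$.

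The main obstacle, and the crux of the lemma, is the \emph{attainment} statement \eqref{lamb-star}: that the infimum is achieved at some $u\in X_+$. Here I would take a minimizing sequence $(u_n)$ and, exploiting $0$-homogeneity, normalize it (say $\Vert u_n\Vert=1$) so that it is bounded in $X$; passing to a subsequence, $u_n\rightharpoonup u$ weakly. The delicate point is ruling out $u=0$, i.e.\ loss of mass, since then $A(u)=B(u)=0$ and the quotient degenerates. To prevent this I would use the compact embedding to get $A(u_n)\to A(u)$ and $B(u_n)\to B(u)$ strongly, show that the minimizing value being finite and positive forces $A(u),B(u)>0$ (hence $u\neq0$), and then invoke the weak lower semicontinuity proved above to conclude $\Lambda_n(u)\le\liminf\Lambda_n(u_n)=\lambda^*$, while $u$ being admissible gives the reverse inequality. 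The positivity $\lambda^*>0$ then follows from the lower bound already established. The compactness afforded by ($V_2$) through Lemma \ref{imerso cont e comp} is exactly what makes this attainment argument work in the unbounded domain $\mathbb{R}^N$.
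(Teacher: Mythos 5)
Your proposal coincides with the paper's proof in every step but one: the exponent computation for $0$-homogeneity, the lower bound via H\"older's inequality (with $a\in L^{2/(2-q)}$) combined with the Stein--Weiss inequality at $s_1=r_1=\tfrac{2N}{2N-2\alpha-\mu}$ and $b\in L^\infty$ (noting $2<pr<2^*$ so Lemma \ref{imerso cont e comp} applies), the weak lower semicontinuity via the wlsc norm in the numerator and compact-embedding convergence of $A$ and $B$ in the denominator, and the attainment argument (normalized minimizing sequence, compactness ruling out the zero weak limit because $\lambda^*<\infty$, then wlsc to close the chain $\lambda^*\leq\Lambda_n(u)\leq\liminf\Lambda_n(u_k)=\lambda^*$) are exactly the arguments in the paper.

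The one place you wobble is unboundedness from above. Your first suggested construction --- functions concentrated where $a$ is large with $B$ comparatively small --- pushes the quotient the wrong way: enlarging $A(u)$ enlarges the denominator of $\Lambda_n$ and hence \emph{decreases} the quotient, and the competing effects of $A$ large versus $B$ small are not resolved by concentration alone. Your fallback (translations/scalings driving $A(u)\to 0$ while keeping $\Vert u\Vert$ and $B(u)$ controlled) has the right mechanism but needs care: translations do not preserve the $X$-norm, since $V$ is not translation invariant (e.g. $V(x)=1+\vert x\vert^2$ under $(V_2)$), so ``keeping $\Vert u\Vert$ controlled'' forces a renormalization, after which the argument collapses to the paper's cleaner one: take any sequence $(u_k)$ on the unit sphere of $X_+$ with $u_k\rightharpoonup 0$ and $u_k\not\to 0$ in $X$; the compact embedding gives $u_k\to 0$ in $L^t(\mathbb R^N)$ for $t\in[2,2^*)$, hence $A(u_k)\to 0$ \emph{and} $B(u_k)\to 0$, and since both sit in the denominator while $\Vert u_k\Vert=1$, $\Lambda_n(u_k)\to\infty$. (There is no need to keep $B$ bounded away from zero; $B\to 0$ only helps.) Your parenthetical about $a\notin L^1$ is a red herring: that is hypothesis $(H_1')$, and neither it nor its negation plays any role in this lemma. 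With the unboundedness step replaced by this weakly-null-sequence argument, your proof is complete and structurally identical to the paper's.
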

\begin{proof}
    It is not hard to prove that $\Lambda_n$ is weakly lower semicontinuous and 0-homogeneous. To prove that $\Lambda_n$ is unbounded from above, we consider a sequence $(u_k)\subseteq S:=\{u\in X_+: \|u\|=1\}$ such that $u_k\rightharpoonup 0$ and $u_k\not\rightarrow 0$ in $X$. It follows from Lemma \ref{imerso cont e comp} that $u_k\rightarrow 0$ in $L^t(\mathbb R^N)$ for all $t\in [2,2^*)$. The last assertion implies that $A(u_k) \to 0$ and $B(u_k)\to 0$. As a consequence, we obtain that
    $$ \lim_{k\to \infty}\Lambda_n(u_k)\nonumber=\lim_{k\to \infty}\frac{C_{p,q}\Vert u_k\Vert^{\frac{2p-q}{p-1}}}{A(u_k) B(u_k)^{\frac{2-q}{2p-2}}}=\lim_{k\to \infty}\frac{C_{p,q}}{A(u_k) B(u_k)^{\frac{2-q}{2p-2}}}=\infty.$$
Now, we shall prove that $\Lambda_n$ is bounded from below. Let $u\in S$ be fixed. It follows from Hölder's inequality and Proposition \ref{Stein-Weiss} with  $s=r=\frac{2N}{2N-2\alpha -\mu}$ that
\begin{eqnarray}\label{LI2}
\lambda^*=\inf_{u\in X_+\cap S}\displaystyle\frac{C_{p,q}}{A(u)\left[B(u)\right]^{\frac{2-q}{2p-2}}}
\geq \inf_{u \in X_+\cap S}\frac{C_{p,q}}{\displaystyle\|a\|_{\frac{2}{2-q}}\|u\|^{q}_2\left[B(u)\right]^{\frac{2-q}{2p-2}}}
\geq \inf_{u\in X_+\cap S}\frac{C_1}{\Vert a\Vert_{\frac{2}{2-q}}\Vert u\Vert^q_2\Vert b\vert u\vert^p\Vert_r^{\frac{2-q}{p-1}}}.
\end{eqnarray}
It is important to point out that the condition \eqref{des1} holds. More specifically, we infer that $\frac{\alpha}{N}<1-\frac{1}{r}=\frac{2\alpha+\mu}{2N}$.

Recall also that $b\in L^{\infty}(\mathbb R^N)$. Therefore, by using \eqref{LI2} and Proposition \ref{imerso cont e comp} together with the continuous embedding $H^1(\R^N)\hookrightarrow L^t(\R^N),~t\in [2,2^*)$, we deduce that
\begin{eqnarray*}
    \lambda^*\geq \inf_{u \in X_+\cap S} \displaystyle\frac{C_1}{\Vert a\Vert_{\frac{2}{2-q}}\Vert b\Vert_{\infty}^{\frac{p(2-q)}{p-1}}\Vert u\Vert^q_2\Vert u\Vert_{pr}^{\frac{p(2-q)}{p-1}}}
   \geq  \inf_{u \in X_+\cap S}\frac{C_2}{\Vert a\Vert^{\frac{2}{2-q}}\Vert b\Vert_{\infty}^{\frac{p(2-q)}{p-1}}\Vert u\Vert^q\Vert u\Vert^{\frac{p(2-q)}{p-1}}}
   =\frac{C_2}{\Vert a\Vert^{\frac{2}{2-q}}\Vert b\Vert_{\infty}^{\frac{p(2-q)}{p-1}}}>0.
\end{eqnarray*}
Here, was used the fact that $2<pr<2^*$. Hence, we obtain that $\lambda^*>0$.

From now on, we shall prove that $\lambda^*$ is attained. Let $(u_k)\subset X_+$ be a minimizing sequence for the functional $\Lambda_n$ in $S$. Since $(u_k)$ is bounded there exists $u \in X$ such that $u_k\rightharpoonup u$ in $X$ and $u_k\to u$ in $L^t(\mathbb R^N)$ holds for each $t\in[2,2^*)$. It follows from \eqref{LI2} that $u\neq 0$. Therefore, by using the fact that $\Lambda_n$ is weakly lower semicontinuous, we deduce that
$$\lambda^*\leq \Lambda_n(u)\leq \displaystyle\lim_{k\to \infty}\inf \Lambda_n(u_k)=\lambda^*.$$
Thus, $\Lambda_n(u)=\lambda^*$ for some $u\in X_+.$ This ends the proof.
\end{proof}

\begin{lemma}\label{N0lambda*}
 Suppose ($H_0$), ($V_1$) and ($V_2$).
   Then for each $\lambda>0$ we obtain that $\mathcal N^+_{\lambda},\;\mathcal N^-_{\lambda}\neq \emptyset.$ Furthermore, we deduce that
    \begin{itemize}
        \item [a)]$\mathcal{N}_{\lambda}^0=\emptyset$ for $\lambda\in(0,\lambda^*);$
        \item [b)] $\mathcal{N}_{\lambda}^0\neq\emptyset$ for $\lambda=\lambda^*.$
    \end{itemize}
\end{lemma}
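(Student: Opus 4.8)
The plan is to reduce every assertion to the shape of the fibering map $t \mapsto Q_n(t) = R_n(tu)$ analysed in Section \ref{RQ-rev} together with the dictionary between its monotonicity and the sign of $\phi''_{\lambda,u}(1)$. Recall from that section that, for each fixed $u \in X_+$, the map $Q_n$ satisfies $Q_n(t) \to 0$ as $t \to 0^+$, is strictly increasing on $(0,t_n(u))$, attains its unique global maximum $Q_n(t_n(u)) = \Lambda_n(u)$ at $t = t_n(u)$, and is strictly decreasing on $(t_n(u),\infty)$ with $Q_n(t)\to -\infty$. Combining \eqref{B5} (so that $tu \in \mathcal{N}_\lambda$ iff $R_n(tu)=\lambda$) with Proposition \ref{d/dtR_n(tu) e J''2} and the identity $\phi''_{\lambda,tu}(1) = J''_\lambda(tu)(tu,tu)$, one sees that a point $tu \in \mathcal{N}_\lambda$ lies in $\mathcal{N}^+_\lambda$, $\mathcal{N}^-_\lambda$, or $\mathcal{N}^0_\lambda$ precisely when $Q'_n(t)$ is positive, negative, or zero, respectively. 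This correspondence is the engine for all three claims.

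Next I would establish $\mathcal{N}^+_\lambda,\mathcal{N}^-_\lambda \neq \emptyset$ for an arbitrary $\lambda>0$. By Lemma \ref{0 homogenea} the functional $\Lambda_n$ is unbounded from above, so there exists $u \in X_+$ with $\Lambda_n(u) > \lambda$. Since $Q_n(t)\to 0$ as $t\to 0^+$, $\max_{t>0}Q_n(t) = \Lambda_n(u) > \lambda > 0$, and $Q_n(t)\to -\infty$, strict monotonicity on each branch together with the intermediate value theorem yields exactly two crossings $t_1 < t_n(u) < t_2$ of the level $\lambda$, with $Q'_n(t_1)>0$ and $Q'_n(t_2)<0$. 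By the dictionary above, $t_1 u \in \mathcal{N}^+_\lambda$ and $t_2 u \in \mathcal{N}^-_\lambda$, so both sets are nonempty.

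For part (a), suppose $\lambda \in (0,\lambda^*)$ and, arguing by contradiction, that some $u \in \mathcal{N}^0_\lambda$ exists. Then $R_n(u)=\lambda$ and $\phi''_{\lambda,u}(1)=0$, whence $Q'_n(1)=0$ by the dictionary. Since $t_n(u)$ is the unique critical point of $Q_n$, this forces $t_n(u)=1$, so that $\lambda = R_n(u) = Q_n(1) = Q_n(t_n(u)) = \Lambda_n(u) \geq \inf_{v\in X_+}\Lambda_n(v) = \lambda^*$, contradicting $\lambda < \lambda^*$. Hence $\mathcal{N}^0_\lambda = \emptyset$.

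For part (b), I would invoke the attainment part of Lemma \ref{0 homogenea}: there exists $u \in X_+$ with $\Lambda_n(u) = \lambda^*$. Setting $v = t_n(u)u$, one has $R_n(v) = Q_n(t_n(u)) = \Lambda_n(u) = \lambda^*$, so $v \in \mathcal{N}_{\lambda^*}$, while $Q'_n(t_n(u)) = 0$ because $t_n(u)$ is the maximum point of $Q_n$. Applying Proposition \ref{d/dtR_n(tu) e J''2} at the level $\lambda^*$ then gives $\phi''_{\lambda^*,v}(1) = J''_{\lambda^*}(v)(v,v) = 0$, so $v \in \mathcal{N}^0_{\lambda^*}$ and the set is nonempty. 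The main obstacle is precisely part (b): it rests on the infimum defining $\lambda^*$ being genuinely attained, since an honest minimizer is what produces a degenerate point lying on $\mathcal{N}^0_{\lambda^*}$; this is exactly where the compact embedding of Lemma \ref{imerso cont e comp} and the weak lower semicontinuity of $\Lambda_n$, already packaged into Lemma \ref{0 homogenea}, do the decisive work.
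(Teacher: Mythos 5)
Your proof is correct and follows essentially the same route as the paper: the dictionary between the sign of $Q_n'$ and the sign of $\phi''_{\lambda,u}(1)$ via \eqref{B5} and Proposition \ref{d/dtR_n(tu) e J''2}, unboundedness of $\Lambda_n$ (Lemma \ref{0 homogenea}) for the nonemptiness of $\mathcal{N}^{\pm}_{\lambda}$, and the attained minimizer $\Lambda_n(u)=\lambda^*$ for part (b). The only differences are that you spell out steps the paper delegates --- item (a) is cited there to \cite{artigo1}, and the two crossings of level $\lambda$ are obtained by invoking Proposition \ref{analise dos lambidas} rather than your direct intermediate-value argument on $Q_n$ --- which if anything is an improvement, since Proposition \ref{analise dos lambidas} is stated only for $\lambda\in(0,\lambda^*)$ while the lemma asserts nonemptiness for every $\lambda>0$.
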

\begin{proof}
Firstly, we prove that $\mathcal{N^+_{\lambda}}\neq\emptyset$ and $\mathcal{N^-_{\lambda}} \neq \emptyset$. According to Lemma \ref{0 homogenea} we infer that $\Lambda_n$ is unbounded from above. Thus, for every $\lambda>0$, there exists $u \in X_+$ such that $\lambda<\Lambda_n(u)$ whenever $0<\lambda<\Lambda_n(u)$. In light of  Proposition \ref{analise dos lambidas} there exists $t_\lambda^+(u)<t_\lambda^-(u)$ such that $t_\lambda^+(u)u\in \mathcal{N^+_{\lambda}}$ and $t_\lambda^-(u)u\in \mathcal{N^-_{\lambda}}$. Therefore, $\mathcal{N^+_{\lambda}}\neq\emptyset$ and $ \mathcal{N^-_{\lambda}} \neq \emptyset$.

It is not hard to prove item $a)$, see for instance \cite{artigo1}. In order to prove the item $b)$ we assume that $\lambda=\lambda^*$. In view of Lemma \ref{0 homogenea} there exists $u\neq 0$ such that $\lambda=\lambda^*=\Lambda_n(u)=Q_n(t_\lambda(u))=R_n(t_\lambda(u)u)$. Then, we obtain that
 $$ R_n(t_\lambda(u)u)=\lambda\qquad\mbox{and}\qquad \frac{d}{dt}R_n(tu)\mid_{t=t_\lambda(u)}=0.$$
Now, by applying Proposition \ref{d/dtR_n(tu) e J''2}, we deduce that $J_{\lambda}''(t_\lambda(u)u)(t_\lambda(u)u,t_\lambda(u)u)=0.$
As a consequence, $t_\lambda(u)u \in \mathcal N_{\lambda}^0$, i.e., the set $\mathcal N_{\lambda}^0$ is nonempty. This finishes the proof.
\end{proof}
The next result proves that any nonzero function in $X_+$ has exactly two projections in the Nehari set. More precisely, one of them in $\mathcal{N}_\lambda^+$ and another projection in $\mathcal{N}_\lambda^-$. In fact, we prove the following result:
\begin{prop}\label{analise dos lambidas}
Suppose ($H_0$), ($V_1$) and ($V_2$). Consider $u\in X_+$ and $\lambda\in (0,\lambda^*)$. Then the fibering map $\phi_{\lambda,u}(t)=J_{\lambda}(tu)$ has exactly two distinct critical points $0<t_\lambda^+(u)<t_\lambda(u)<t_\lambda^-(u)$. Furthermore, the number $t_\lambda^+(u)$ is a local minimum point for the fibering map $\phi_{\lambda,u}$ which satisfies $t_\lambda^+(u)u \in \mathcal {N}_{\lambda}^+$ and the number $t_\lambda^-(u)$ is a local maximum for $\phi_{\lambda,u}$ which satisfies $t_\lambda^-(u)u \in \mathcal {N}_{\lambda}^-$.
\end{prop}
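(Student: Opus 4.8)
The plan is to reduce everything to the one-variable analysis of the fibering map $Q_n(t)=R_n(tu)$ carried out earlier in this section, exploiting the fact that $\lambda<\lambda^*$ forces the global maximum of $Q_n$ to lie strictly above the level $\lambda$.

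First I would record the correspondence between critical points of $\phi_{\lambda,u}$ and the level set $\{\,t>0:R_n(tu)=\lambda\,\}$. Since $\phi'_{\lambda,u}(t)=J'_{\lambda}(tu)u=\tfrac{1}{t}J'_{\lambda}(tu)(tu)$, Remark \ref{relação entre R_n e J'1} (equivalently \eqref{B5}) shows that $t>0$ is a critical point of $\phi_{\lambda,u}$ if and only if $R_n(tu)=\lambda$, i.e. $Q_n(t)=\lambda$. Hence counting and classifying the critical points of $\phi_{\lambda,u}$ amounts to counting and locating the solutions of $Q_n(t)=\lambda$.

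Next I would invoke the behavior of $Q_n$ already established: from the limits $\lim_{t\to0}Q_n(t)/t^{2-q}>0$ and $\lim_{t\to\infty}Q_n(t)/t^{2p-q}<0$ together with the sign analysis of $Q'_n$, the map $Q_n$ is strictly increasing on $(0,t_\lambda(u))$, strictly decreasing on $(t_\lambda(u),\infty)$, tends to $0$ as $t\to0^+$ and to $-\infty$ as $t\to\infty$, and attains its global maximum value $\Lambda_n(u)=Q_n(t_\lambda(u))$. The decisive point is that $\lambda\in(0,\lambda^*)$ with $\lambda^*=\inf_{v\in X_+}\Lambda_n(v)$ (Lemma \ref{0 homogenea}) yields $\lambda<\lambda^*\le\Lambda_n(u)=\max_{t>0}Q_n(t)$. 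Thus the horizontal level $\lambda$ lies strictly below the maximum and strictly above the limiting values, so by strict monotonicity on each side of $t_\lambda(u)$ and the intermediate value theorem the equation $Q_n(t)=\lambda$ has exactly two solutions $t_\lambda^+(u)\in(0,t_\lambda(u))$ and $t_\lambda^-(u)\in(t_\lambda(u),\infty)$, i.e. precisely the two announced critical points $0<t_\lambda^+(u)<t_\lambda(u)<t_\lambda^-(u)$.

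Finally I would identify the type of each critical point. On $(0,t_\lambda(u))$ the map $Q_n$ is increasing, so $\tfrac{d}{dt}R_n(tu)\big|_{t=t_\lambda^+(u)}>0$; Proposition \ref{d/dtR_n(tu) e J''2} then gives $J''_{\lambda}(t_\lambda^+(u)u)(t_\lambda^+(u)u,t_\lambda^+(u)u)>0$, and since $J''_{\lambda}(tu)(tu,tu)=t^2\phi''_{\lambda,u}(t)$ this means $\phi''_{\lambda,u}(t_\lambda^+(u))>0$, so $t_\lambda^+(u)$ is a local minimum; writing $v=t_\lambda^+(u)u$, we have $v\in\mathcal N_\lambda$ and $\phi''_{\lambda,v}(1)=J''_{\lambda}(v)(v,v)>0$, hence $v\in\mathcal N_\lambda^+$. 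The symmetric argument on $(t_\lambda(u),\infty)$, where $Q_n$ is decreasing, gives $J''_{\lambda}(t_\lambda^-(u)u)(\cdot,\cdot)<0$, so $t_\lambda^-(u)$ is a local maximum and $t_\lambda^-(u)u\in\mathcal N_\lambda^-$. The step I expect to be the crux is the uniform strict inequality $\lambda<\Lambda_n(u)$ valid for every $u\in X_+$: it is exactly the definition of $\lambda^*$ as $\inf_v\Lambda_n(v)$ that rules out both the tangency case $\lambda=\Lambda_n(u)$ (which would produce a single degenerate critical point in $\mathcal N_\lambda^0$) and the empty case, thereby forcing precisely two transversal intersections. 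Once this inequality is secured, the remaining ingredients — the endpoint limits, the monotonicity of $Q_n$, the translation of derivative signs through Proposition \ref{d/dtR_n(tu) e J''2}, and the homogeneity identity $J''_{\lambda}(tu)(tu,tu)=t^2\phi''_{\lambda,u}(t)$ — are routine.
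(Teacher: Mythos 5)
Your proof is correct and is exactly the argument the paper intends: the paper itself omits the details, deferring to \cite[Proposition 3.10]{artigo1}, but the machinery of this section — the fibering map $Q_n$ with unique maximizer $t_\lambda(u)$ and maximum value $\Lambda_n(u)$, the definition $\lambda^*=\inf_{v\in X_+}\Lambda_n(v)$ from Lemma \ref{0 homogenea}, and the sign correspondence of Proposition \ref{d/dtR_n(tu) e J''2} — is precisely what you deploy. You correctly identify the crux, namely the strict inequality $\lambda<\lambda^*\leq\Lambda_n(u)$ which rules out both tangency and emptiness so that the level $\lambda$ cuts the graph of $Q_n$ transversally exactly twice, and the classification via the scaling identity $J''_{\lambda}(tu)(tu,tu)=t^2\phi''_{\lambda,u}(t)$ placing $t_\lambda^{\pm}(u)u$ in $\mathcal{N}_\lambda^{\pm}$ is sound.
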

\begin{proof}
The proof follows the same ideas employed in the proof of \cite[Prposition 3.10]{artigo1}. We omit the details.
\end{proof}
Now, by using Lemma \ref{N0lambda*}, we consider the following result:
\begin{prop}
 Suppose ($H_0$), ($V_1$) and ($V_2$). Assume that $\lambda=\lambda^*$ and $u\in X_+$ such that $u$ is a minimizer for the functional $\Lambda_n.$ Then the fibering map $\phi_{\lambda,u}(t)=J_{\lambda}(tu)$ has a unique critical point and $\phi_{\lambda,u}(t)$ is decreasing for all $t>0$.
\end{prop}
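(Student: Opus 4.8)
The plan is to read off both conclusions directly from the behaviour of the fibering map $Q_n(t)=R_n(tu)$, exploiting that $u$ minimizes $\Lambda_n$ precisely at the critical level $\lambda=\lambda^*$. First I would record that, by the discussion preceding the statement, the map $t\mapsto Q_n(t)$ attains its unique global maximum at $t_\lambda(u)$, with maximal value $Q_n(t_\lambda(u))=\Lambda_n(u)$. Since $u$ is a minimizer for $\Lambda_n$, Lemma~\ref{0 homogenea} gives $\Lambda_n(u)=\inf_{v\in X_+}\Lambda_n(v)=\lambda^*$, and the hypothesis $\lambda=\lambda^*$ then yields
\[
\max_{t>0}R_n(tu)=Q_n(t_\lambda(u))=\Lambda_n(u)=\lambda^*=\lambda .
\]
Consequently $R_n(tu)\le\lambda$ for every $t>0$, with equality if and only if $t=t_\lambda(u)$, because $t_\lambda(u)$ is the unique maximizer of $Q_n$.

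Next I would transfer this inequality to the derivative of the fibering map. Recalling that $J'_{\lambda}(v)v=\phi'_{\lambda,v}(1)$ and applying this to $v=tu$ (together with $\phi_{\lambda,tu}(s)=\phi_{\lambda,u}(st)$) gives the identity $J'_{\lambda}(tu)(tu)=t\,\phi'_{\lambda,u}(t)$ for all $t>0$. Hence Remark~\ref{relação entre R_n e J'1}, applied at the point $tu\in X_+$, furnishes the equivalences
\[
R_n(tu)<\lambda\iff J'_{\lambda}(tu)(tu)<0\iff\phi'_{\lambda,u}(t)<0,\qquad R_n(tu)=\lambda\iff\phi'_{\lambda,u}(t)=0 .
\]
Combining these with the first step, I obtain $\phi'_{\lambda,u}(t)<0$ for every $t\ne t_\lambda(u)$ and $\phi'_{\lambda,u}(t_\lambda(u))=0$.

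From here both assertions follow at once. The equivalence $\phi'_{\lambda,u}(t)=0\iff t=t_\lambda(u)$ shows that $\phi_{\lambda,u}$ possesses exactly one critical point, namely $t_\lambda(u)$; and since $\phi'_{\lambda,u}\le 0$ on $(0,\infty)$ with the derivative vanishing only at the isolated point $t_\lambda(u)$, the continuous map $t\mapsto\phi_{\lambda,u}(t)$ is strictly decreasing on $(0,\infty)$. I do not expect any genuine obstacle here: the entire argument is merely a translation, via the dictionary \eqref{d38} and Remark~\ref{relação entre R_n e J'1}, of the already established fact that $Q_n$ has a unique maximum whose value coincides with $\lambda^*$ exactly when $u$ realizes the infimum defining $\lambda^*$. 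The only point deserving care is the justification that $R_n(tu)=\lambda$ holds at a single value of $t$; this is a consequence of the strict unimodality of $Q_n$ (increasing on $(0,t_\lambda(u))$ and decreasing on $(t_\lambda(u),\infty)$), which was proved when $t_\lambda(u)$ was identified as the global maximum of the fibering map.
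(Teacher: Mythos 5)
Your proof is correct and takes essentially the same route as the paper's: both observe that the minimizer property forces $\max_{t>0}R_n(tu)=\Lambda_n(u)=\lambda^*=\lambda$ with equality only at $t=t_\lambda(u)$, and then invoke the remark relating the sign of $R_n(tu)-\lambda$ to the sign of $J_{\lambda}'(tu)(tu)$ to conclude $\phi_{\lambda,u}'(t)\leq 0$ for all $t>0$ with a single zero. Your write-up merely makes explicit what the paper's terse proof leaves implicit, namely the identity $J_{\lambda}'(tu)(tu)=t\,\phi_{\lambda,u}'(t)$ and the strict unimodality of $Q_n$, so there is nothing to correct.
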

\begin{proof}
Recall that $R_n(tu)\leq \lambda^*,~t>0$ holds where the equality holds if, and only if $t=t_\lambda(u)$. It follows from Remark \ref{relação entre R_n e J'1} $i)$ and $iii)$ that $\phi_{\lambda,u}'(t)=J'_{\lambda}(tu)u\leq 0,~t>0.$ Consequently, we infer that $\phi_{\lambda,u}(t)=J_{\lambda}(tu)$ has a unique critical point $t_\lambda(u)$. Moreover, $\phi_{\lambda,u}(t)$ is decreasing for all $t>0$. This ends the proof.
\end{proof}
Now, we shall consider a result whose the proof is based on in \cite[Lemma 2.5]{kaye}. Namely, we prove the following result:
\begin{lemma}\label{3 passos}
 Suppose ($H_0$), ($V_1$) and ($V_2$). Consider the function $v\in X\backslash \{0\},$ such that $\Lambda_n(v)=\lambda^*$ and define the function $u=t_\lambda(v)v$. Then $u$ is a weak solution to the following nonlocal elliptic problem:
    \begin{equation}\label{P3}
\left\{\begin{array}{ll}
   -2\Delta u+2V(x)u=q\lambda^*a(x)\vert u\vert^{q-2}u+2p\displaystyle\int_{\mathbb R^N}\displaystyle\frac{b(y)\vert u(y)\vert^p b(x)\vert u\vert^{p-2}u}{\vert x\vert^{\alpha}\vert x-y\vert^{\mu}\vert y\vert^{\alpha}}dy,~\hbox{in}\;\; \mathbb R^N,\\
   u \in H^1(\mathbb R^N).
    \end{array}
\right.  \tag{$Q_{\lambda^*}$}
\end{equation}
\begin{proof}
It follows from Proposition \ref{0 homogenea} that $v$ is a global minimizer for the fibering map $\phi_{\lambda,v}$. Recall also that $\Lambda_n$ is 0-homogeneous. Thus, $\Lambda_n(v)=\Lambda_n(t_\lambda(v)v)=\lambda^*$. Now, we shall prove that $u:=t_\lambda(v)v\in X\backslash \{0\}$ satisfies $\Lambda^{'}_n(u) \varphi =0$ for any $\varphi \in X$, i.e., $u$ is a weak solution to the Problem \eqref{P3}.
Indeed, we mention that $\Lambda_n$ can be written as
\begin{equation}\label{lambda=fg}
   \Lambda_n(u)=C(p,q)f(u)g(u),\qquad\mbox{where}\qquad f(u)=\displaystyle\frac{1}{A(u)}\qquad\mbox{and}\qquad g(u)=\displaystyle\frac{\Vert u\Vert^{\frac{2p-q}{p-1}}}{[B(u)]^{\frac{2-q}{2(p-1)}}}.
\end{equation}
Under these conditions, we shall split the proof into three steps. Initially, we consider the following steps:

\noindent Step $1)$. In this step we shall prove that $\left<f'(u),\varphi\right>$ exists for all $\varphi>0$. In order to prove that, for any $\varphi>0, \varphi \in X$, it follows by continuity of $B$ that $g(u+t\varphi)$ is well-defined for each $t>0$. Furthermore, we deduce that $$\left<g'(u),\varphi\right>=\displaystyle\lim_{t\to 0}\displaystyle\frac{g(u+t\varphi)-g(u)}{t}.$$
Notice also that $u$ is a minimum point for the functional $\Lambda_n(u)$. Therefore, we obtain that
    \begin{equation}\label{lambida-lambida*}
    \Lambda_n(u+t\varphi)-\Lambda_n(u)=\Lambda_n(u+t\varphi)-\lambda^*\geq 0,
    \end{equation}
holds for all $t>0$ small enough. Now, by using \eqref{lambda=fg} and \eqref{lambida-lambida*}, we infer that
\begin{eqnarray*}
  0\nonumber&\leq&\Lambda_n(u+t\varphi)-\Lambda_n(u)=f(u+t\varphi)g(u+t\varphi)-f(u)g(u) =\left[g(u+t\varphi)-g(u)\right]f(u+t\varphi)+\left[f(u+t\varphi)-f(u)\right]g(u).
    \end{eqnarray*}
Hence, we deduce that
\begin{equation}\label{g(u+tfi)-g(u)}
        \left[g(u+t\varphi)-g(u)\right]f(u+t\varphi)\geq -g(u)\left[f(u+t\varphi)-f(u)\right].
\end{equation}
Now, we  define the function $M(t)=f(u+t\varphi), t > 0$. In light of the Mean Value Theorem there exists $0<\theta=\theta(t)< t<1$ such that
    $$
   \displaystyle\frac{M(t)-M(0)}{t}=M'(\theta)=-[A(u+\theta\varphi)]^{-2}\;\displaystyle\frac{d}{dt}A(u+\theta\varphi)=q\int_\Omega a(x)(u+\theta \varphi)^{q-1}\varphi dx.
   $$
It is important to stress that the last integral is well-defined due to the fact that $0\leq a(u+\theta \varphi)^{q-1}\varphi\leq a\varphi^q$ a.e. in $\R^N$.
As a consequence, by using \eqref{g(u+tfi)-g(u)}, we have that
\begin{eqnarray}\label{g1}
    \infty \nonumber& > \nonumber& \displaystyle\liminf_{t\to 0}\displaystyle\frac{g(u+t\varphi)-g(u)}{t}f(u+t\varphi)~\geq~  \displaystyle\liminf_{t\to 0}-g(u)M'(\theta)\\
    &=& g(u)\displaystyle\liminf_{t\to 0}[A(u+\theta\varphi)]^{-2}q\int_\Omega a(x) (u+\theta \varphi)^{q-1}\varphi dx.
  \end{eqnarray}
According to Fatou's Lemma and \eqref{g1} we obtain that
\begin{eqnarray}\label{positive}
\infty> g'(u)\varphi f(u) \geq g(u)\left[\displaystyle\int_{\mathbb R^N}a(x)\vert u\vert^qdx\right]^{-2}q\displaystyle\int_{\mathbb R^N}a(x)G(x)\varphi dx,
\end{eqnarray}
where
\begin{eqnarray*}
G(x)=\left\{\begin{array}{ll}
   \vert u(x)\vert^{q-2}u(x), \;\;\hbox{if}\;\; u(x)\neq
0,\\
\infty,\;\;\hbox{if}\;\;u(x)=0.
\end{array}
\right.
\end{eqnarray*}
Now, by applying \eqref{positive}, we deduce that $u>0$ a.e. in $\R^N$ and $a u^{q-1}\varphi\in L^1(\R^N)$ holds for each $\varphi>0$. Hence, we obtain that
$$\left<A'(u),\varphi\right>=q\displaystyle\int_{\mathbb R^N}a(x)\vert u\vert^{q-2}u\varphi dx, \,\, \varphi\in X_+.$$
Recall also that $f(u)=\left[A(u)\right]^{-1}$. Under these conditions, we obtain that
$$\left<f'(u),\varphi\right>=-\left[\displaystyle\int_{\mathbb R^N}a(x)\vert u\vert^q dx\right]^{-2}q\displaystyle\int_{\mathbb R^N}a(x)\vert u\vert^{q-2}u\varphi dx,\;\;\varphi\in X_+.$$
\noindent Step $2)$. In this step we shall prove that
\begin{eqnarray}
   \nonumber 2\left<u, \varphi\right>-q\lambda^*\displaystyle\int_{\mathbb R^N}a(x)\vert u\vert^{q-2}u\varphi dx-2p\displaystyle\int_{\mathbb R^N}\displaystyle\int_{\mathbb R^N}\displaystyle\frac{b(y)\vert u\vert^pb(x)\vert u\vert^{p-2}u\varphi dx}{\vert x\vert^{\alpha}\vert x-y\vert^{\mu}\vert y\vert^{\alpha}}\geq 0, ~\varphi\in X_+.
\end{eqnarray}
Once again, by using the fact that $u\in X$ is a minimum point for the functional$\Lambda_n,$ it follows from the previous step that
$$
\Lambda'_n(u)\varphi=\displaystyle\lim_{t\to 0}\displaystyle\frac{\Lambda_n(u+t\varphi)-\Lambda_n(u)}{t}\geq 0.
$$
Under these conditions, we mention that
\begin{eqnarray}\label{d10}
   0&\leq &\Lambda'_n(u)\varphi \nonumber=        C_{p,q}\displaystyle\frac{\displaystyle\frac{2p-q}{p-1}\Vert u\Vert^{\frac{p-q+2}{p-1}}\left<u,\varphi\right>A(u)\left[B(u)\right]^{\frac{2-q}{2p-2}}}{\left[A(u)\right]^2\left[B(u)\right]^{\frac{2-q}{p-1}}}-\displaystyle\frac{ C_{p,q}\Vert u\Vert^{\frac{2p-q}{p-1}}}{\left[A(u)\right]^2\left[B(u)\right]^{\frac{2-q}{p-1}}}\left[q\left[B(u)\right]^{\frac{2-q}{2p-2}}\displaystyle H(u,\varphi)\right]\\
    &-&\displaystyle\frac{ C_{p,q}\Vert u\Vert^{\frac{2p-q}{p-1}}}{\left[A(u)\right]^2\left[B(u)\right]^{\frac{2-q}{p-1}}}\left[A(u)\left(\displaystyle\frac{2-q}{2p-2}\right)\left[B(u)\right]^{\frac{4-q-2p}{2p-2}}2p D(u,\varphi)\right].
\end{eqnarray}
where $H(u,\varphi)$ and $D(u,\varphi)$ were defined in \eqref{HD(u,phi)}. Without loss of generality, we assume that $\Vert u\Vert=1$. It follows from \eqref{d10} that
\begin{eqnarray}\label{d11}
    0&\leq & \displaystyle\frac{\left(\displaystyle\frac{2p-q}{p-1}\right)\left[B(u)\right]^{\frac{2-q}{2p-2}}\left<u, \varphi\right>-\left(\displaystyle\frac{2-q}{2p-2}\right)\left[B(u)\right]^{\frac{4-q-2p}{2p-2}}2p D(u,\varphi)}{A(u)\left[B(u)\right]^{\frac{2-q}{p-1}}}- \displaystyle\frac{q\left[B(u)\right]^{\frac{2-q}{2p-2}}\displaystyle H(u,\varphi)}{\left[A(u)\right]^2\left[B(u)\right]^{\frac{2-q}{p-1}}},
\end{eqnarray}
holds for all $\varphi\in X_+$. Therefore, we obtain that $u\in \mathcal{N}_{\lambda^*}^0$. In virtue of \eqref{fibra de J'} and \eqref{fibra de J''} we see that
\begin{equation}\label{d12}
    0=\phi_{\lambda,u}'(1)=1-\lambda^*A(u)-B(u), \,\,
0=\phi_{\lambda,u}''(1)=1-(q-1)\lambda^*A(u)-(2p-1)B(u).
\end{equation}
As a consequence, by using \eqref{d12}, we deduce that
\begin{equation}\label{d14}
    A(u)=\displaystyle\frac{2p-2}{\lambda^*(2p-q)}.
\end{equation}
Furthermore, by using \eqref{d14} and \eqref{d12}, we verify that
\begin{equation}\label{d15a}
    B(u)=\displaystyle\frac{2-q}{2p-q}.
\end{equation}
According to \eqref{d14}, \eqref{d15a} and \eqref{d11} we deduce that
$$
0\leq \displaystyle\frac{(2p-q)^{\frac{4p-2-q}{2p-2}}\lambda^*}{4(p-1)^2(2-q)^{\frac{2-q}{2p-2}}}\left[2\left<u,\varphi\right>-2p D(u,\varphi)-\lambda^*q\displaystyle\int_{\mathbb R^N}a(x)\vert u\vert^{q-2}u\varphi dx\right].
$$
Therefore, we obtain
\begin{equation}\label{d15}
    2\left<u,\varphi\right>-2p D(u,\varphi)-\lambda^*qH(u,\varphi)\geq 0,\,\, \varphi\in X_+.
\end{equation}
\noindent Step $3)$. In this step we shall prove that
$$
2\left<u,\varphi\right>-2pD(u,\varphi)-\lambda^*qH(u,\varphi)=0,\, \,  \varphi\in X.
$$
Let us consider $\Theta=(u+\varepsilon\varphi)^+\in X_+$ for any $\varepsilon>0.$
Since $\Theta\in X_+$ it follows from that \eqref{d15} is satisfied. In particular, we have that
$$
2\left<u,\Theta\right>-\lambda^*q\displaystyle\int_{\mathbb R^N}a(x)\vert u\vert^{q-2}u\Theta dx-2p\displaystyle\int_{\mathbb R^N}\displaystyle\int_{\mathbb R^N}\displaystyle\frac{b(y)\vert u\vert^pb(x)\vert u\vert^{p-2}u\Theta}{\vert x\vert^{\alpha}\vert x-y\vert^{\mu}\vert y\vert^{\alpha}}dxdy\geq 0.
$$
Recall also that $(u+\varepsilon\varphi)^+=(u+\varepsilon\varphi)-(u+\varepsilon\varphi)^-$. Hence, we obtain
\begin{eqnarray}\label{d16}
    0 \nonumber&\leq & 2\left<u,u+\varepsilon\varphi\right>-\lambda^*q\displaystyle\int_{\mathbb R^N} a(x)\vert u\vert^{q-2}u(u+\varepsilon\varphi)dx-  2p\displaystyle\int_{\mathbb R^N}\displaystyle\int_{\mathbb R^N}\displaystyle\frac{b(y)\vert u\vert^pb(x)\vert u\vert^{p-2}u(u+\varepsilon\varphi)}{\vert x\vert^{\alpha}\vert x-y\vert^{\mu}\vert y\vert^{\alpha}}dxdy\\
   \nonumber &-& 2\left<u,(u+\varepsilon\varphi)^-\right>+\lambda^*q\displaystyle\int_{[u+\varepsilon\varphi\leq 0]}a(x)\vert u\vert^{q-2}u(u+\varepsilon\varphi)dx+ 2p\displaystyle\int_{[u+\varepsilon\varphi\leq 0]}\displaystyle\int_{\mathbb R^N}\displaystyle\frac{b(y)\vert u\vert^pb(x)\vert u\vert^{p-2}u(u+\varepsilon\varphi)}{\vert x\vert^{\alpha}\vert x-y\vert^{\mu}\vert y\vert^{\alpha}}dxdy\\
   \nonumber&\leq & 2 \Vert u\Vert^2-\lambda^*q\displaystyle\int_{\mathbb R^N}a(x)\vert u\vert^q dx-2p\displaystyle\int_{\mathbb R^N}\displaystyle\int_{\mathbb R^N}\displaystyle\frac{b(y)\vert u\vert^p b(x)\vert u\vert^p}{\vert x\vert^{\alpha}\vert x-y\vert^{\mu}\vert y\vert^{\alpha}}dxdy+ 2\varepsilon\left<u, \varphi\right>-\lambda^*q\varepsilon\displaystyle\int_{\mathbb R^N}a(x)\vert u\vert^{q-2}u\varphi dx\\
   &-& 2\varepsilon\displaystyle\int\limits_{\{u+\varepsilon\varphi\leq 0\}}\nabla u\nabla\varphi+V(x)u\varphi dx- 2p\varepsilon\displaystyle\int_{\mathbb R^N}\displaystyle\int_{\mathbb R^N}\displaystyle\frac{b(y)\vert u\vert^p b(x)\vert u\vert^{p-2}u\varphi}{\vert x\vert^{\alpha}\vert x-y\vert^{\mu}\vert y\vert^{\alpha}}dxdy.
\end{eqnarray}
Notice also that $\Lambda_n(u)=\lambda^*$. Under these conditions, we mention that
\begin{equation}\label{d17}
    0=\phi_{\lambda^*,\mu}'(1)=\Vert u\Vert^2-\lambda^*A(u)-B(u), \,\,
    0=\phi_{\lambda^*,\mu}''(1)=\Vert u\Vert^2-(q-1)\lambda^*A(u)-(2p-1)B(u).
\end{equation}
Therefore, by using \eqref{d17}, we deduce that
\begin{equation}\label{d19}
0=2\Vert u\Vert^2-q\lambda^*A(u)-2pB(u).
\end{equation}
Now, by using \eqref{d19} and \eqref{d16}, we infer that
\begin{eqnarray}\label{d21}
    0 &\leq & 2\left<u,\varphi\right>-\lambda^*q\displaystyle\int_{\mathbb R^N}a(x)\vert u\vert^{q-2}u\varphi dx- 2p D(u,\varphi)-2\displaystyle\int_{[u+\varepsilon\varphi\leq 0]}\nabla u\nabla\varphi+V(x) u\varphi dx.
\end{eqnarray}
It is important to emphasize that
\begin{eqnarray*}\label{TCD-grad}
\left \vert\vert\nabla u\nabla\varphi+V(x)u\varphi\vert\chi_{\{u+\varepsilon\varphi\leq 0\}}(x)\right\vert\leq \vert \nabla u\nabla\varphi+V(x)u\varphi\vert\vert\chi_{\{u+\varepsilon\varphi\leq 0\}}(x)\vert\leq\vert \nabla u\nabla\varphi+V(x)u\varphi\vert\in L^1(\mathbb R^N).
\end{eqnarray*}
Thus, by using the Dominated Convergence Theorem, we conclude that
\begin{equation*}\label{d22}
 \displaystyle\lim_{\varepsilon\to 0}\displaystyle\int_{\mathbb R^N}\vert\nabla u\nabla\varphi+V(x) u\varphi\vert \chi_{\{u+\varepsilon\varphi\leq 0\}}(x)dx= 0.
\end{equation*}
Therefore, by using \eqref{d21} and doing $\varepsilon \to 0$, we infer that
\begin{equation*}\label{sol.fraca>=0}
0\leq 2\left<u,\varphi\right>-\lambda^*q\displaystyle\int_{\mathbb R^N}a(x)\vert u\vert^{q-2}u\varphi dx -2pD(u,\varphi),\, \, \varphi\in X.
\end{equation*}
As a consequence, $u$ is a weak solution for the Problem \eqref{P3}. This ends the proof.
\end{proof}
\end{lemma}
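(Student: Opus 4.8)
The plan is to regard $u=t_\lambda(v)v$ as a global minimizer of the functional $\Lambda_n$ and to extract from this minimality the weak formulation of $(Q_{\lambda^*})$. Since $\Lambda_n$ is $0$-homogeneous by Lemma \ref{0 homogenea}, we have $\Lambda_n(u)=\Lambda_n(v)=\lambda^*=\inf_{X_+}\Lambda_n$, so $u$ itself minimizes $\Lambda_n$; moreover, because $t_\lambda(v)$ is the maximum point of $t\mapsto R_n(tv)$ and $\lambda=\lambda^*$, the point $u$ lies in the degenerate set $\mathcal{N}_{\lambda^*}^0$. The central obstacle is that $A(u)=\int_{\mathbb{R}^N}a|u|^q\,dx$ is only of class $C^0$: since $0<q<1$, the formal derivative $|u|^{q-2}u$ blows up where $u$ vanishes, so $\Lambda_n$ is not Fréchet differentiable and I cannot write down its Euler--Lagrange equation directly. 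I would therefore work with one-sided directional derivatives and test only against nonnegative directions first.

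First I would factor $\Lambda_n=C_{p,q}\,f\,g$ with $f=1/A$ and $g=\|\cdot\|^{(2p-q)/(p-1)}/B^{(2-q)/(2(p-1))}$, noting that $g$ is of class $C^1$ because $B\in C^1$ by Proposition \ref{J continuo1}. Fixing $\varphi\in X$ with $\varphi>0$, minimality gives $\Lambda_n(u+t\varphi)-\Lambda_n(u)\ge 0$ for small $t>0$. Applying the Mean Value Theorem to $t\mapsto f(u+t\varphi)$ and then Fatou's lemma to the resulting difference quotients, I expect to conclude simultaneously that $u>0$ a.e. in $\mathbb{R}^N$, that $a\,u^{q-1}\varphi\in L^1(\mathbb{R}^N)$, and that the one-sided derivative $\langle A'(u),\varphi\rangle=q\int_{\mathbb{R}^N}a|u|^{q-2}u\varphi\,dx$ exists and is finite for every $\varphi\ge 0$. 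This is the technical heart where the singularity is tamed, and the positivity $u>0$ a.e. produced here is essential for the final step.

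With the directional derivative in hand, minimality yields $\Lambda_n'(u)\varphi\ge 0$ for all $\varphi\ge 0$. I would compute this derivative explicitly from the factored form and then substitute the constraint values $A(u)=\tfrac{2p-2}{\lambda^*(2p-q)}$ and $B(u)=\tfrac{2-q}{2p-q}$, which follow from $u\in\mathcal{N}_{\lambda^*}^0$, i.e. from $\phi_{\lambda^*,u}'(1)=\phi_{\lambda^*,u}''(1)=0$ via \eqref{fibra de J'} and \eqref{fibra de J''}. After cancelling a strictly positive factor this collapses to the variational inequality
\begin{equation*}
2\langle u,\varphi\rangle-2p\,D(u,\varphi)-\lambda^* q\,H(u,\varphi)\ge 0,\qquad \varphi\in X_+.
\end{equation*}

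The final and, I expect, most delicate step is to upgrade this inequality to an equality valid for all (not merely nonnegative) $\varphi\in X$. Here I would use the standard singular-problem device of testing with the truncation $\Theta=(u+\varepsilon\varphi)^+\in X_+$ for $\varepsilon>0$ and arbitrary $\varphi\in X$. Writing $(u+\varepsilon\varphi)^+=(u+\varepsilon\varphi)-(u+\varepsilon\varphi)^-$ and inserting $\Theta$, the bulk terms assemble into the Nehari relation $2\|u\|^2-\lambda^* q\,A(u)-2p\,B(u)$, which vanishes on $\mathcal{N}_{\lambda^*}^0$, so what remains is a correction integrated over the set $\{u+\varepsilon\varphi\le 0\}$. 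Since $u>0$ a.e., this set has measure tending to $0$, while its integrand is dominated by $|\nabla u\nabla\varphi+V(x)u\varphi|\in L^1(\mathbb{R}^N)$; thus the Dominated Convergence Theorem kills the correction as $\varepsilon\to 0$, producing the reverse inequality and hence the equality $2\langle u,\varphi\rangle-2p\,D(u,\varphi)-\lambda^* q\,H(u,\varphi)=0$ for every $\varphi\in X$. This is precisely the weak formulation of $(Q_{\lambda^*})$, which completes the argument.
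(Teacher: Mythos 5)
Your proposal is correct and follows essentially the same route as the paper's proof: the same factorization $\Lambda_n = C_{p,q}\,f\,g$ with the Mean Value Theorem and Fatou's Lemma taming the singular term $A$ (yielding $u>0$ a.e.\ and $a u^{q-1}\varphi\in L^1(\mathbb{R}^N)$), the same substitution of the constraint values $A(u)=\frac{2p-2}{\lambda^*(2p-q)}$ and $B(u)=\frac{2-q}{2p-q}$ coming from $u\in\mathcal{N}_{\lambda^*}^0$ to obtain the variational inequality on $X_+$, and the same truncation $(u+\varepsilon\varphi)^+$ with the Dominated Convergence Theorem to upgrade it to an equality for all $\varphi\in X$. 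No gaps beyond the level of detail you flagged yourself.
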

As a fundamental ingredient in order to show the existence and multiplicity of solutions for the Problem \eqref{P1} we need to ensure the continuity and monotonicity of the energy functional restricted to Nehari sets $\mathcal{N}_{\lambda}^+$ and $\mathcal{N}_{\lambda}^-$. Let us define
$J^+_{\lambda}:X_+ \setminus \{0\} \to \mathbb R$ and $J^-_{\lambda}: X_+ \setminus \{0\}\to \mathbb R$ given by
$
    J^+_{\lambda}(u)=J_{\lambda}(t_{\lambda}^+(u)u)\;\;\hbox{and}\;\;J^-_{\lambda}(u)=J_{\lambda}(t_{\lambda}^-(u)u).
$
\begin{lemma}\label{t cont. em relação a lambda}
 Suppose ($H_0$), ($V_1$) and ($V_2$). Let $u\in X_+$ and $I\subset \mathbb R$ an open interval such that $t_{\lambda}^{+}(u)$ and $t^-_{\lambda}(u)$ are well-defined for all $\lambda\in I.$ Therefore, we obtain the following assertions:
\begin{itemize}
    \item [$a)$] The functions $I\ni\lambda\mapsto t^{\pm}_{\lambda}(u)$ are $C^1.$ Furthermore, $I\ni\lambda\mapsto t^+_{\lambda}(u)$ is increasing while $I\ni\lambda\mapsto t^-_{\lambda}(u)$ is decreasing.
    \item[$b)$] It holds that $I\ni\lambda\mapsto J^{\pm}_{\lambda}(u)$ define functions of $C^1$ class which are also decreasing.
\end{itemize}
\begin{proof}
   $a)$ In order to prove that $I\ni\lambda\mapsto t^{\pm}_{\lambda}(u)$ are in $C^1$ class we define an auxiliary function $F: (0, \infty) \times (0, \infty) \times X_+ \to \mathbb{R}$ of class $C^1$ given by
     $$F(\lambda,t,u)=\phi_{\lambda}'(t)=t\Vert u\Vert^2-\lambda t^{q-1}A(u)-t^{2p-1}B(u),\;\;t>0.$$
   For each $\lambda_i\in I,$ we have that $t^+_{\lambda_i}(u)$ is well-defined and $t^+_{\lambda_i}(u)u\in\mathcal{N}^+_{\lambda_i}.$ We observe that
    $$F(\lambda_i,t_{\lambda_i}^+(u),u)=\phi_{\lambda_i,u}'(t^+_{\lambda_i}(u))=0\qquad\mbox{and}\qquad\displaystyle\frac{\partial F}{\partial t}(\lambda_i,t,u){\big|}_{t=t_{\lambda_i}^+(u)}=\phi_{\lambda_i,u}''(t_{\lambda_i}^+(u))>0.$$
Now, by using the Implicit Function Theorem \cite[Remark 4.2.3, Theorem 4.2.1]{DRABEK}, we deduce that $t_{\lambda}^+(u)\in C^1\left((\lambda_i-\varepsilon,\lambda_i+\varepsilon),\mathbb R\right)$ holds for any $\varepsilon>0$ small enough.  Notice that $I$ is an open interval and $\lambda_i$ is arbitrary. The last assertion yields $t_{\lambda}^+(u)\in C^1(I, \mathbb R).$
Moreover, we mention that
$$F(\lambda,t^+_{\lambda}(u),u)=0, \qquad \frac{\partial F}{\partial t}(\lambda,t,u){\big|}_{t=t_{\lambda}^+(u)}>0\qquad\mbox{and}\qquad\displaystyle\frac{\partial F}{\partial \lambda}(\lambda, t_{\lambda}^+(u),u)+\displaystyle\frac{\partial F}{\partial t}(\lambda,t,u){\big|}_{t=t_{\lambda}^+(u)}\frac{dt^+_{\lambda}(u)}{d\lambda}=0.$$
In particular, we see that
$$\displaystyle\frac{dt^+_{\lambda}(u)}{d\lambda}=-\displaystyle\frac{\displaystyle\frac{\partial F}{\partial\lambda}(\lambda,t^+_{\lambda}(u),u)}{\frac{\partial F}{\partial t}(\lambda,t,u){\big|}_{t=t_{\lambda}^+(u)}}=\displaystyle\frac{t^+_{\lambda}(u)^{q-1}A(u)}{\phi''_{\lambda,u}(t^+_{\lambda}(u))}>0.$$
Thus, the function $\lambda \mapsto t^+_{\lambda}(u)$ is increasing. Similarly, we prove that $\lambda \mapsto t^-_{\lambda}(u)$ is decreasing which defines also a function in $C^1(I, \mathbb{R})$.\\
$b)$ Notice that $t^+_{\lambda}(u) > 0$ and $J^+_{\lambda}(u) = J_{\lambda}(t^+_{\lambda}(u)u)$. It follows from part $a)$ that $\lambda \mapsto J^+_{\lambda}(u)$ is $C^1$. It is important to mention that
$
    J^+_{\lambda}(u)=\phi_{\lambda,u}(t^+_{\lambda}(u)).
$
Under these conditions, we have that
\begin{eqnarray*}
&&\displaystyle\frac{dJ^+_{\lambda}(u)}{d\lambda}=\displaystyle\frac{\partial \phi_{\lambda,u}(t^+_{\lambda}(u))}{\partial t}\displaystyle\frac{dt^+_{\lambda}(u)}{d\lambda}+\displaystyle\frac{\partial \phi_{\lambda,u}(t_{\lambda}^+(u))}{\partial\lambda}=\displaystyle\frac{\partial \phi_{\lambda,u}(t_{\lambda}^+(u))}{\partial\lambda}
= -\displaystyle\frac{t^+_{\lambda}(u)^q A(u)}{q}<0.
\end{eqnarray*}
Here was used the fact that $t^+_{\lambda}(u)u\in \mathcal{N}^+_{\lambda}.$ Now, using the last estimate, we deduce that $I\ni\lambda\mapsto J^+_{\lambda}(u)$ is decreasing. Similarly, we prove that $I\ni\lambda\mapsto J^-_{\lambda}(u)$ is a continuous and decreasing function. This finishes the proof.
\end{proof}
\end{lemma}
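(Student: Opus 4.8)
The plan is to realize $t^{\pm}_{\lambda}(u)$ as implicitly defined roots of the fibering derivative and invoke the Implicit Function Theorem, the crucial input being the strict sign of $\phi''_{\lambda,u}$ on $\mathcal{N}^{\pm}_{\lambda}$. First I would fix $u\in X_+$ and introduce $F:(0,\infty)\times(0,\infty)\to\mathbb{R}$, $F(\lambda,t)=\phi'_{\lambda,u}(t)=t\|u\|^2-\lambda t^{q-1}A(u)-t^{2p-1}B(u)$, which is $C^1$ in $(\lambda,t)$ because $A(u)$ and $B(u)$ are fixed positive constants. By Proposition \ref{analise dos lambidas} the equation $F(\lambda,\cdot)=0$ has, for each $\lambda\in I$, exactly the two roots $t^+_{\lambda}(u)<t^-_{\lambda}(u)$, and by the very definition of the sets $\mathcal{N}^{\pm}_{\lambda}$ these satisfy $\partial_t F(\lambda,t^+_{\lambda}(u))=\phi''_{\lambda,u}(t^+_{\lambda}(u))>0$ and $\partial_t F(\lambda,t^-_{\lambda}(u))=\phi''_{\lambda,u}(t^-_{\lambda}(u))<0$. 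Since $\partial_t F\neq 0$ at either root, the Implicit Function Theorem produces a $C^1$ local branch; because $I$ is open and the base point is arbitrary, I conclude $\lambda\mapsto t^{\pm}_{\lambda}(u)\in C^1(I)$.

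The monotonicity in part $a)$ I would then read straight off the implicit differentiation of $F(\lambda,t^{\pm}_{\lambda}(u))\equiv 0$. Using $\partial_\lambda F=-t^{q-1}A(u)$ gives
\[
\frac{dt^{\pm}_{\lambda}(u)}{d\lambda}=-\frac{\partial_\lambda F}{\partial_t F}=\frac{\big(t^{\pm}_{\lambda}(u)\big)^{q-1}A(u)}{\phi''_{\lambda,u}(t^{\pm}_{\lambda}(u))}.
\]
Since $a>0$ together with $u\geq 0$, $u\neq 0$ forces $A(u)>0$, the sign of the right-hand side is dictated entirely by $\phi''_{\lambda,u}$. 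Thus $\lambda\mapsto t^+_{\lambda}(u)$ is increasing (denominator positive on $\mathcal{N}^+_{\lambda}$) while $\lambda\mapsto t^-_{\lambda}(u)$ is decreasing (denominator negative on $\mathcal{N}^-_{\lambda}$).

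For part $b)$ the plan is to exploit an envelope cancellation. Writing $J^{\pm}_{\lambda}(u)=\phi_{\lambda,u}(t^{\pm}_{\lambda}(u))$ with $\phi_{\lambda,u}(t)=\tfrac{t^2}{2}\|u\|^2-\tfrac{t^q\lambda}{q}A(u)-\tfrac{t^{2p}}{2p}B(u)$, the $C^1$ regularity in $\lambda$ follows from part $a)$ and smoothness of $\phi_{\lambda,u}$. Differentiating by the chain rule,
\[
\frac{dJ^{\pm}_{\lambda}(u)}{d\lambda}=\phi'_{\lambda,u}(t^{\pm}_{\lambda}(u))\,\frac{dt^{\pm}_{\lambda}(u)}{d\lambda}+\frac{\partial\phi_{\lambda,u}}{\partial\lambda}(t^{\pm}_{\lambda}(u))=-\frac{\big(t^{\pm}_{\lambda}(u)\big)^q A(u)}{q},
\]
where the first term drops out because $t^{\pm}_{\lambda}(u)$ is a critical point of $\phi_{\lambda,u}$, and $\partial_\lambda\phi_{\lambda,u}(t)=-t^qA(u)/q$. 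As $A(u)>0$, $q>0$ and $t^{\pm}_{\lambda}(u)>0$, this derivative is strictly negative, so both value functions are decreasing.

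I do not expect a genuine obstacle here: the argument is routine once the non-degeneracy $\phi''_{\lambda,u}(t^{\pm}_{\lambda}(u))\neq 0$ is in hand, and the only point requiring care is precisely that this strict sign holds throughout $I$, which is guaranteed by $\mathcal{N}^0_{\lambda}=\emptyset$ for $\lambda\in(0,\lambda^*)$ (Lemma \ref{N0lambda*}). The conceptual shortcut that makes $b)$ effortless is the envelope identity: along the critical branch the derivative of the value sees only the explicit $\lambda$-dependence of $\phi_{\lambda,u}$, so the a priori awkward term $dt^{\pm}_{\lambda}/d\lambda$ never enters the sign computation.
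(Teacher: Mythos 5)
Your proposal is correct and follows essentially the same route as the paper: the same auxiliary function $F(\lambda,t)=\phi'_{\lambda,u}(t)$, the Implicit Function Theorem driven by the strict sign of $\phi''_{\lambda,u}(t^{\pm}_{\lambda}(u))$ on $\mathcal{N}^{\pm}_{\lambda}$, the identical implicit-differentiation formula for $dt^{\pm}_{\lambda}(u)/d\lambda$, and for part $b)$ the same envelope cancellation reducing $dJ^{\pm}_{\lambda}(u)/d\lambda$ to $-\bigl(t^{\pm}_{\lambda}(u)\bigr)^{q}A(u)/q<0$. The only cosmetic difference is that you fix $u$ and define $F$ on $(0,\infty)\times(0,\infty)$ rather than carrying the $X_+$ variable as the paper does, which changes nothing.
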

Now, we are able to prove that the functions $\lambda \mapsto J_{\lambda}^{\pm}$ are continuous for each $\lambda \in (0,\lambda^*]$. Namely, we shall prove the following result:
\begin{cor}
Suppose ($H_0$), ($H'_1$), ($H'_2$), ($V_1$), ($V_2$). Consider a fixed function $u\in X_+$. Then we obtain the following identities:
$$\displaystyle\lim_{\lambda\to \lambda^*}t^-_{\lambda}(u)=t_{\lambda^*}^-(u),\;\;\displaystyle\lim_{\lambda\to \lambda^*}t^+_{\lambda}(u)=t_{\lambda^*}^+(u), \,\, \mbox{and} \,\, \displaystyle\lim_{\lambda\to \lambda^*}J^-_{\lambda}(u)=J_{\lambda^*}(t^-_{\lambda^*}(u)u),\;\;\displaystyle\lim_{\lambda\to \lambda^*}J^+_{\lambda}(u)=J_{\lambda^*}(t^+_{\lambda^*}(u)u).$$
\end{cor}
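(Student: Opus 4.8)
The plan is to reduce the statement to the monotonicity of the projection maps already established in Lemma \ref{t cont. em relação a lambda}, combined with the strict monotonicity and uniqueness of the maximum of the scalar fibering map $t\mapsto R_n(tu)=Q_n(t)$. First I would record the shape of $Q_n$ for the fixed $u\in X_+$. Since $A(u),B(u),\Vert u\Vert$ are positive constants, $Q_n$ is smooth in $t$, with $Q_n(t)\to 0^+$ as $t\to 0^+$ and $Q_n(t)\to -\infty$ as $t\to \infty$; as shown in Section \ref{RQ-rev}, it is strictly increasing on $(0,t_\lambda(u))$ and strictly decreasing on $(t_\lambda(u),\infty)$, attaining its unique global maximum $\Lambda_n(u)$ at $t_\lambda(u)$ (which, as its explicit formula shows, does not depend on $\lambda$). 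Recall also $\Lambda_n(u)\geq \lambda^*$. Consequently, for each $\lambda\in(0,\Lambda_n(u))$ the equation $Q_n(t)=\lambda$ has exactly two roots, $t^+_\lambda(u)\in(0,t_\lambda(u))$ and $t^-_\lambda(u)\in(t_\lambda(u),\infty)$, while for $\lambda=\Lambda_n(u)$ it has the single root $t_\lambda(u)$.

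Second, by Lemma \ref{t cont. em relação a lambda} the map $\lambda\mapsto t^+_\lambda(u)$ is increasing and $\lambda\mapsto t^-_\lambda(u)$ is decreasing on the interval where they are defined. Combined with the uniform bounds $0<t^+_\lambda(u)<t_\lambda(u)<t^-_\lambda(u)$, these monotone and bounded families admit one-sided limits as $\lambda\uparrow \lambda^*$; set $L^+:=\lim_{\lambda\uparrow \lambda^*}t^+_\lambda(u)\leq t_\lambda(u)$ and $L^-:=\lim_{\lambda\uparrow \lambda^*}t^-_\lambda(u)\geq t_\lambda(u)$. Since $Q_n$ is continuous and $Q_n(t^\pm_\lambda(u))=\lambda$, letting $\lambda\uparrow\lambda^*$ yields $Q_n(L^\pm)=\lambda^*$, that is, $R_n(L^\pm u)=\lambda^*$.

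Third, I would identify $L^\pm$ with $t^\pm_{\lambda^*}(u)$, distinguishing two situations. If $\Lambda_n(u)>\lambda^*$, then $Q_n(L^+)=\lambda^*<\Lambda_n(u)=Q_n(t_\lambda(u))$ forces $L^+<t_\lambda(u)$, so $L^+$ is the unique root of $Q_n(t)=\lambda^*$ in $(0,t_\lambda(u))$, namely $t^+_{\lambda^*}(u)$; symmetrically $L^-=t^-_{\lambda^*}(u)$. In this case both projections persist on an open neighborhood of $\lambda^*$, so the limit is in fact two-sided and is immediate from the $C^1$ assertion of Lemma \ref{t cont. em relação a lambda}. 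If instead $\Lambda_n(u)=\lambda^*$ (the degenerate case where $u$ minimizes $\Lambda_n$, so $u\in \mathcal N_{\lambda^*}^0$ after projection), then $Q_n(L^\pm)=\lambda^*=\Lambda_n(u)$ and the uniqueness of the maximizer of $t\mapsto R_n(tu)$ forces $L^+=L^-=t_\lambda(u)=t^\pm_{\lambda^*}(u)$. Here the two projections collapse onto the tangency point and Lemma \ref{t cont. em relação a lambda} no longer applies, which is why the argument must pass through the scalar map $Q_n$; this degenerate identification is the main obstacle of the proof.

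Finally, for the energy limits I would invoke the joint continuity of $(\lambda,t)\mapsto \phi_{\lambda,u}(t)=J_{\lambda}(tu)=\tfrac{t^2}{2}\Vert u\Vert^2-\tfrac{t^q\lambda}{q}A(u)-\tfrac{t^{2p}}{2p}B(u)$, which is a continuous (indeed smooth) function of the two real variables since the coefficients $\Vert u\Vert^2,A(u),B(u)$ are fixed. Because $J^\pm_\lambda(u)=\phi_{\lambda,u}(t^\pm_\lambda(u))$ and $t^\pm_\lambda(u)\to t^\pm_{\lambda^*}(u)$ by the previous steps, I conclude $J^\pm_\lambda(u)\to \phi_{\lambda^*,u}(t^\pm_{\lambda^*}(u))=J_{\lambda^*}(t^\pm_{\lambda^*}(u)u)$, which is precisely the claimed identity.
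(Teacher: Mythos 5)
Your proof is correct and follows essentially the route the paper indicates: it combines Lemma \ref{t cont. em relação a lambda} (monotonicity and $C^1$-dependence of $\lambda\mapsto t^\pm_\lambda(u)$) with the elementary continuity and strict unimodality of the scalar map $t\mapsto Q_n(t)$, which is precisely the ``standard argument'' the paper invokes while omitting the details. Your explicit treatment of the degenerate case $\Lambda_n(u)=\lambda^*$, where the two projections collapse onto the unique maximizer of $Q_n$ and the implicit-function argument no longer applies, is a welcome sharpening of a point the paper leaves implicit.
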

\begin{proof}
The proof follows using a standard argument together with Proposition \ref{t cont. em relação a lambda}. The details are omitted.
\end{proof}
Now, we shall prove the following useful result:
\begin{lemma}\label{C_n+<01}
Suppose ($H_0$), ($V_1$), ($V_2$) and $\lambda\in (0,\lambda^*)$. Then $C_{\mathcal{N}_{\lambda}^+}=J_{\lambda}(u)<0.$
\end{lemma}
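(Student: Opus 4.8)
The plan is to establish simultaneously that every element of $\mathcal{N}_\lambda^+$ has strictly negative energy (which already gives $C_{\mathcal{N}_\lambda^+}<0$) and that the infimum is attained. First I would record the two algebraic identities available on the Nehari set. For $u\in\mathcal{N}_\lambda$ the relation $\phi_{\lambda,u}'(1)=0$ reads $\|u\|^2=\lambda A(u)+B(u)$; substituting this into \eqref{funcional energia} eliminates the norm and yields
\[
J_\lambda(u)=\frac{q-2}{2q}\,\lambda A(u)+\frac{p-1}{2p}\,B(u).
\]
On the other hand, combining $\phi_{\lambda,u}'(1)=0$ with \eqref{fibra de J''} shows that $u\in\mathcal{N}_\lambda^+$ is equivalent to $(2-q)\lambda A(u)>(2p-2)B(u)$. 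Inserting the resulting bound $B(u)<\tfrac{(2-q)\lambda A(u)}{2p-2}$ into the identity above gives $J_\lambda(u)<-\tfrac{(2-q)(2p-q)}{4pq}\,\lambda A(u)<0$ for every $u\in\mathcal{N}_\lambda^+$. Since $\mathcal{N}_\lambda^+\neq\emptyset$ by Lemma \ref{N0lambda*}, this forces $C_{\mathcal{N}_\lambda^+}<0$.

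For the attainment I would run the direct method. Take a minimizing sequence $(u_k)\subset\mathcal{N}_\lambda^+$. The coercivity of $J_\lambda$ on $\mathcal{N}_\lambda$ bounds $(u_k)$, so up to a subsequence $u_k\rightharpoonup u$ in $X$, with $u_k\to u$ in $L^t(\mathbb{R}^N)$ for $t\in[2,2^*)$ and a.e., by Lemma \ref{imerso cont e comp}. Hölder's inequality together with Proposition \ref{Stein-Weiss} (exactly as in the proof of Lemma \ref{0 homogenea}) then give $A(u_k)\to A(u)$ and $B(u_k)\to B(u)$. To see that $u\neq0$, note that if $u=0$ then $A(u_k)\to0$ and $B(u_k)\to0$, whence $\liminf_k J_\lambda(u_k)\geq0$, contradicting $C_{\mathcal{N}_\lambda^+}<0$.

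The heart of the argument is the strong convergence $u_k\to u$ in $X$. Passing the Nehari identity to the limit gives $\|u_k\|^2=\lambda A(u_k)+B(u_k)\to\lambda A(u)+B(u)=:\ell$, while weak lower semicontinuity of the norm gives $\|u\|^2\leq\ell$. Suppose the inequality were strict. Writing $\phi_{\lambda,u}'(t)=t\big(\|u\|^2-g_u(t)\big)$ with $g_u(t)=\lambda t^{q-2}A(u)+t^{2p-2}B(u)$, the limit of the sign condition defining $\mathcal{N}_\lambda^+$ yields $(2-q)\lambda A(u)\geq(2p-2)B(u)$, i.e. $g_u'(1)\leq0$; since $\lambda<\lambda^*$ guarantees via Proposition \ref{analise dos lambidas} that $\phi_{\lambda,u}$ has exactly the two critical points $t_\lambda^+(u)<t_\lambda^-(u)$, the monotonicity of $g_u$ forces $1<t_\lambda^+(u)$. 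As $\phi_{\lambda,u}$ is decreasing on $(0,t_\lambda^+(u))$ we obtain $J_\lambda(t_\lambda^+(u)u)=\phi_{\lambda,u}(t_\lambda^+(u))<\phi_{\lambda,u}(1)=J_\lambda(u)$; but the strict norm inequality also gives $J_\lambda(u)<\lim_k J_\lambda(u_k)=C_{\mathcal{N}_\lambda^+}$, while $t_\lambda^+(u)u\in\mathcal{N}_\lambda^+$ forces $J_\lambda(t_\lambda^+(u)u)\geq C_{\mathcal{N}_\lambda^+}$, a contradiction. Hence $\|u\|^2=\ell$ and $u_k\to u$ strongly.

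Strong convergence gives $u\in\mathcal{N}_\lambda$ with $J_\lambda(u)=C_{\mathcal{N}_\lambda^+}$; passing the sign condition to the limit places $u$ in $\mathcal{N}_\lambda^+\cup\mathcal{N}_\lambda^0$, and since $\mathcal{N}_\lambda^0=\emptyset$ for $\lambda\in(0,\lambda^*)$ by Lemma \ref{N0lambda*} we conclude $u\in\mathcal{N}_\lambda^+$, so $C_{\mathcal{N}_\lambda^+}=J_\lambda(u)<0$. I expect the main obstacle to be precisely this strong convergence step: the danger is that the weak limit relaxes onto the descending branch of its own fibering map (the ``$\mathcal{N}_\lambda^-$ side''), and the whole point is that the sign condition $(2-q)\lambda A(u)\geq(2p-2)B(u)$ inherited from $\mathcal{N}_\lambda^+$ pins the relevant scaling factor so that $1\leq t_\lambda^+(u)$, ruling this out and closing the contradiction.
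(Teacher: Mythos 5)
Your proposal is correct, but it takes a genuinely different route from the paper on both halves of the statement. For the negativity, the paper's own proof is a one-line fibering argument: since $0<q<1$, the singular term $-\frac{\lambda}{q}t^{q}A(v)$ dominates as $t\to 0^+$, so $J_{\lambda}(tv)<0$ for $t>0$ small, and since $\phi_{\lambda,v}$ is decreasing on $(0,t_{\lambda}^+(v))$ this yields $C_{\mathcal{N}_{\lambda}^+}\leq J_{\lambda}(t_{\lambda}^+(v)v)\leq J_{\lambda}(tv)<0$. You instead combine the Nehari identity $\Vert u\Vert^2=\lambda A(u)+B(u)$ with the sign condition $(2-q)\lambda A(u)>(2p-2)B(u)$ characterizing $\mathcal{N}_{\lambda}^+$ to get the pointwise bound
\begin{equation*}
J_{\lambda}(u)<-\frac{(2-q)(2p-q)}{4pq}\,\lambda A(u)<0\quad\text{for every }u\in\mathcal{N}_{\lambda}^+,
\end{equation*}
which I have checked and which is stronger than what the paper proves here: it is quantitative and uniform over the set, not just a statement about the infimum (the paper's version, on the other hand, needs no second-order information at all). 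For the attainment, note that the paper does not prove it inside this lemma — despite the wording ``$C_{\mathcal{N}_{\lambda}^+}=J_{\lambda}(u)$'', the paper's proof only shows negativity and defers attainment to the subsequent Lemma \ref{J(u)=C_ N+1}; your direct-method argument covers that lemma as well, so your proposal is in fact more faithful to the literal statement. Even there your mechanics differ: the paper rules out $u_k\not\to u$ by invoking weak lower semicontinuity of $v\mapsto J_{\lambda}'(v)v$ to get $J_{\lambda}'(t_{\lambda}^+(u)u_k)t_{\lambda}^+(u)u_k>0$ for large $k$, hence $1=t_{\lambda}^+(u_k)<t_{\lambda}^+(u)$, whereas you pass the Nehari identity and the sign condition to the limit (using $A(u_k)\to A(u)$, $B(u_k)\to B(u)$) and exploit the monotonicity of $g_u(t)=\lambda t^{q-2}A(u)+t^{2p-2}B(u)$ on $(0,t_{\min}]$ to force $1<t_{\lambda}^+(u)$; both close the same contradiction chain $C_{\mathcal{N}_{\lambda}^+}\leq J_{\lambda}(t_{\lambda}^+(u)u)<J_{\lambda}(u)<\lim_k J_{\lambda}(u_k)=C_{\mathcal{N}_{\lambda}^+}$, and your variant has the mild advantage of needing only continuity of $A$ and $B$ along the compactly embedded $L^t$-convergence rather than the semicontinuity property of $J_{\lambda}'$. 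All steps check out, including the geometry $t_{\lambda}^+(u)<t_{\min}$ from $\phi_{\lambda,u}''(t_{\lambda}^+(u))>0$ and the final use of $\mathcal{N}_{\lambda}^0=\emptyset$ from Lemma \ref{N0lambda*}.
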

\begin{proof}
It is easy to see that $C_{\mathcal{N}_{\lambda}^+}=\displaystyle\inf_{v\in\mathcal{N}_{\lambda}^+}J_{\lambda}(v)\leq J_{\lambda}(tv)<0$ holds for all $v\in X_+$ and $t>0$ small enough. This ends the proof.
\end{proof}

From now on, we shall prove that minimizers sequences strongly converge for some point $u \in X_+$ where $u \neq 0$. Firstly, we prove the following result:
\begin{lemma}\label{J(u)=C_ N+1}
  Suppose ($H_0$), ($V_1$), ($V_2$) and $\lambda\in (0,\lambda^*)$. Consider  $(u_k)\subseteq\mathcal{N}_{\lambda}^+$ a minimizing sequence for the functional $ J_{\lambda}$ in $\mathcal{N}_{\lambda}^+.$ Then there exists $ u_\lambda \in \mathcal{N}_\lambda^+$ such that, up to a subsequence, $u_k \to u_\lambda$ in  $X$. Moreover, we obtain that $C_{\mathcal{N}_{\lambda}^+}=J_{\lambda}(u_\lambda)$.
\end{lemma}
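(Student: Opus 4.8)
The plan is to run the standard Nehari-manifold compactness argument: coercivity yields boundedness, the compact embeddings give convergence of the lower-order terms, and the strict negativity $C_{\mathcal{N}_\lambda^+}<0$ prevents vanishing; the only genuinely delicate point is to locate the weak limit inside $\mathcal{N}_\lambda^+$ rather than on $\mathcal{N}_\lambda^-$ or off the Nehari set. First I would record boundedness: since $J_\lambda$ is coercive on $\mathcal{N}_\lambda$, the minimizing sequence $(u_k)\subseteq\mathcal{N}_\lambda^+$ is bounded in $X$, so up to a subsequence $u_k\rightharpoonup u_\lambda$ in $X$, and by Lemma \ref{imerso cont e comp} $u_k\to u_\lambda$ strongly in $L^t(\mathbb R^N)$ for every $t\in[2,2^*)$ and a.e.\ in $\mathbb R^N$; in particular $u_\lambda\ge0$. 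Using $a\in L^{2/(2-q)}(\mathbb R^N)$ and Hölder's inequality gives $A(u_k)\to A(u_\lambda)$, while $b\in L^\infty(\mathbb R^N)$ together with the Stein--Weiss inequality (Proposition \ref{Stein-Weiss}) and the strong $L^{pr}$-convergence gives $B(u_k)\to B(u_\lambda)$. I write $A_\infty=A(u_\lambda)$, $B_\infty=B(u_\lambda)$ and, along a further subsequence, $L=\lim_k\|u_k\|^2\ge\|u_\lambda\|^2$ by weak lower semicontinuity of the norm.

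Next I would exclude $u_\lambda=0$. If $u_\lambda=0$ then $A_\infty=B_\infty=0$, and the Nehari identity $\|u_k\|^2=\lambda A(u_k)+B(u_k)$ valid on $\mathcal{N}_\lambda$ forces $\|u_k\|\to0$; continuity of $J_\lambda$ then gives $J_\lambda(u_k)\to J_\lambda(0)=0$, contradicting $J_\lambda(u_k)\to C_{\mathcal{N}_\lambda^+}<0$ (Lemma \ref{C_n+<01}). Hence $u_\lambda\in X_+$ and $A_\infty>0$.

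The crux is to prove $u_\lambda\in\mathcal{N}_\lambda^+$. Passing to the limit in $\phi'_{\lambda,u_k}(1)=\|u_k\|^2-\lambda A(u_k)-B(u_k)=0$ and using $\|u_\lambda\|^2\le L$ yields $\phi'_{\lambda,u_\lambda}(1)=\|u_\lambda\|^2-\lambda A_\infty-B_\infty\le0$, i.e.\ $R_n(u_\lambda)\le\lambda$ (Remark \ref{relação entre R_n e J'1}). By Proposition \ref{analise dos lambidas} the map $\phi_{\lambda,u_\lambda}$ has exactly the two critical points $t_\lambda^+(u_\lambda)<t_\lambda^-(u_\lambda)$ with $\phi'_{\lambda,u_\lambda}<0$ off $[t_\lambda^+(u_\lambda),t_\lambda^-(u_\lambda)]$, so $R_n(u_\lambda)\le\lambda$ places $t=1$ in $(0,t_\lambda^+(u_\lambda)]$ or in $[t_\lambda^-(u_\lambda),\infty)$, and the plan is to exclude everything except $1=t_\lambda^+(u_\lambda)$. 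If $1<t_\lambda^+(u_\lambda)$, then $\phi_{\lambda,u_\lambda}$ is strictly decreasing past $t=1$ up to its first well, so $J_\lambda(t_\lambda^+(u_\lambda)u_\lambda)<\phi_{\lambda,u_\lambda}(1)=J_\lambda(u_\lambda)\le C_{\mathcal{N}_\lambda^+}$, contradicting $t_\lambda^+(u_\lambda)u_\lambda\in\mathcal{N}_\lambda^+$. If $1=t_\lambda^-(u_\lambda)$, then $J_\lambda(u_\lambda)$ is the local maximum value while the first-well minimum $J_\lambda(t_\lambda^+(u_\lambda)u_\lambda)$ is strictly smaller, again contradicting the definition of $C_{\mathcal{N}_\lambda^+}$. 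The remaining and hardest case $1>t_\lambda^-(u_\lambda)$ I would kill by comparing $u_\lambda$ with the limiting profile: setting $\psi_v(t)=t^{2-q}\|v\|^2-t^{2p-q}B(v)-\lambda A(v)$, so that the sign of $\phi'_{\lambda,v}(t)$ equals that of $\psi_v(t)$, the pointwise limit $\bar\psi(t)=t^{2-q}L-t^{2p-q}B_\infty-\lambda A_\infty$ satisfies $\bar\psi(1)=0$ and $\bar\psi'(1)\ge0$, so $t=1$ is the smallest zero of $\bar\psi$ and $\bar\psi<0$ on $(0,1)$; since $L\ge\|u_\lambda\|^2$ gives $\bar\psi\ge\psi_{u_\lambda}$ pointwise, the identity $\psi_{u_\lambda}(t_\lambda^+(u_\lambda))=0$ forces $\bar\psi(t_\lambda^+(u_\lambda))\ge0$ and hence $t_\lambda^+(u_\lambda)\ge1$, which is incompatible with $t_\lambda^+(u_\lambda)<t_\lambda^-(u_\lambda)<1$. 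Therefore $1=t_\lambda^+(u_\lambda)$ and $u_\lambda\in\mathcal{N}_\lambda^+$.

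Finally I would close the argument. Membership $u_\lambda\in\mathcal{N}_\lambda^+$ gives $J_\lambda(u_\lambda)\ge C_{\mathcal{N}_\lambda^+}$, while weak lower semicontinuity (Lemma \ref{Jsemi contínua}) gives $J_\lambda(u_\lambda)\le\liminf_k J_\lambda(u_k)=C_{\mathcal{N}_\lambda^+}$, whence $J_\lambda(u_\lambda)=C_{\mathcal{N}_\lambda^+}$. Comparing $J_\lambda(u_\lambda)=\tfrac12\|u_\lambda\|^2-\tfrac\lambda qA_\infty-\tfrac1{2p}B_\infty$ with $C_{\mathcal{N}_\lambda^+}=\tfrac12 L-\tfrac\lambda qA_\infty-\tfrac1{2p}B_\infty$ forces $\|u_\lambda\|^2=L=\lim_k\|u_k\|^2$; together with $u_k\rightharpoonup u_\lambda$ in the Hilbert space $X$ this yields $u_k\to u_\lambda$ strongly, completing the proof. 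I expect the main obstacle to be precisely the exclusion of the case $1>t_\lambda^-(u_\lambda)$, which is why the comparison between the fibering functions of $u_\lambda$ and of the limiting profile $\bar\psi$ is the decisive ingredient.
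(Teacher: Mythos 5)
Your proof is correct, but it handles the decisive step by a genuinely different route from the paper. The paper argues by contradiction: assuming $u_k\not\to u_\lambda$, so that $\|u_\lambda\|<\liminf_k\|u_k\|$, it uses the strict weak lower semicontinuity of $v\mapsto J_\lambda'(v)v$ at the projection point $t_\lambda^+(u_\lambda)$ to get $\phi'_{\lambda,u_k}(t_\lambda^+(u_\lambda))>0$ for large $k$, hence $1=t_\lambda^+(u_k)<t_\lambda^+(u_\lambda)<t_\lambda^-(u_k)$, and then monotonicity of $\phi_{\lambda,u_\lambda}$ on $(0,t_\lambda^+(u_\lambda))$ yields $C_{\mathcal{N}_\lambda^+}\le J_\lambda(t_\lambda^+(u_\lambda)u_\lambda)\le J_\lambda(u_\lambda)<\lim_k J_\lambda(u_k)=C_{\mathcal{N}_\lambda^+}$, a contradiction that forces strong convergence directly, with membership $u_\lambda\in\mathcal{N}_\lambda^+$ left implicit (it follows from $\mathcal{N}_\lambda^0=\emptyset$, Lemma \ref{N0lambda*}). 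You instead first locate the weak limit on the manifold by classifying the position of $t=1$ relative to $t_\lambda^\pm(u_\lambda)$ via the comparison function $\bar\psi$ built from $L=\lim_k\|u_k\|^2$ --- whose properties $\bar\psi(1)=0$ and $\bar\psi'(1)\ge 0$ correctly encode the limits of the Nehari identities $\phi'_{\lambda,u_k}(1)=0$ and $\phi''_{\lambda,u_k}(1)>0$ --- and only afterwards recover strong convergence from the energy identity $\|u_\lambda\|^2=L$ together with weak convergence in the Hilbert space $X$. Both routes rest on the same pillars: coercivity on $\mathcal{N}_\lambda$, the compact embedding of Lemma \ref{imerso cont e comp} giving $A(u_k)\to A(u_\lambda)$ and $B(u_k)\to B(u_\lambda)$, the strict negativity $C_{\mathcal{N}_\lambda^+}<0$ of Lemma \ref{C_n+<01}, and the two-projection structure of Proposition \ref{analise dos lambidas}. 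What your version buys: it avoids invoking the strict weak lower semicontinuity of the fibering derivative as a black box, and it proves $u_\lambda\in\mathcal{N}_\lambda^+$ explicitly, a point the paper's statement asserts but whose verification it leaves to the reader. What the paper's version buys is brevity: a single contradiction disposes of everything at once. One small remark: your comparison argument actually gives $t_\lambda^+(u_\lambda)\ge 1$ unconditionally (it nowhere uses the assumption $1>t_\lambda^-(u_\lambda)$), which already rules out your cases $1=t_\lambda^-(u_\lambda)$ and $1>t_\lambda^-(u_\lambda)$ simultaneously, so the three-case discussion is slightly redundant --- harmless, but worth streamlining.
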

\begin{proof}
Initially, we consider a minimizing sequence $(u_k)\subset\mathcal{N}_{\lambda}^+$, i.e., $$J_{\lambda}(u_k) \to C_{\mathcal{N}_{\lambda}^+}=\displaystyle\inf_{w\in \mathcal{N}_{\lambda}^+}J_{\lambda}(w).$$
Since $J_{\lambda}$ is coercive in the Nehari set $\mathcal{N}_{\lambda}$ we know that $(u_k)$ is bounded in $X$. Thus, up to a subsequence, there exists $u\in X_+$ such that $u_k \rightharpoonup u$ in $X$. Now, we claim that $u\neq 0$. Otherwise, we obtain that $u=0$ and $u_k\rightharpoonup 0$ in $X$. Hence, $A(u_k)\to 0$ and $B(u_k)\to 0$. Therefore, by using \eqref{fibra de J'}, we obtain that
$0=J'_{\lambda}(u_k)u_k=\Vert u_k\Vert^2-\lambda A(u_k)-B(u_k).$
As a consequence, $\Vert u_k\Vert\to 0$ and $J_{\lambda}(u_k)\to 0$ as $k \to \infty$.
This is a contradiction due to the fact that $$J_{\lambda}(u_k)\to C_{\mathcal{N}_{\lambda}^+}<0.$$
Thus, we obtain that $u\neq 0$. Now, the proof of the strong convergence follows arguing by contradiction. Let us suppose that $u_k \rightharpoonup u$ and $u_k \not\rightarrow u$ in $X$. These facts implies that
$$\Vert u\Vert<\displaystyle\lim_{k\to \infty}\inf\Vert u_k\Vert.$$
Consider also the fibering map $\phi_{\lambda,u}:[0,\infty) \to \mathbb R$ given by $\phi_{\lambda,u}(t)=J_{\lambda}(tu),~t\geq 0,~u\in X$.
According to Proposition \ref{analise dos lambidas} there exists a unique $t_\lambda^+(u)>0$ such that $t_\lambda^+(u)u \in \mathcal{N}_{\lambda}^+$. Moreover, we know that $\phi'_{\lambda,u}(t_\lambda^+(u))=J_{\lambda}'(t_\lambda^+(u)u)u=0.$ In particular, by using the fact that $u \mapsto J'_\lambda(u) u$ is weakly lower semicontinous, we obtain that
\begin{eqnarray*}
    0=t_\lambda^+(u)\displaystyle\frac{d}{dt}J_{\lambda}(tu)\big|_{t=t_\lambda^+(u)}=J'(t_\lambda^+(u)u)t_\lambda^+(u)u<\displaystyle\liminf_{k\to\infty} J_{\lambda}'(t_\lambda^+(u)u_k)t_\lambda^+(u)u_k.
\end{eqnarray*}
As a consequence, we obtain that
$
J_{\lambda}'(t_\lambda^+(u)u_k)t_\lambda^+(u)u_k>0$
hold for all $k$ large enough. Therefore, $t_\lambda^+(u_k)<t_\lambda^+(u)<t_\lambda^-(u_k)$. Notice also that $t_\lambda^+(u_k)=1$ which implies that $1\leq t_\lambda^+(u)$. Now, by using the fact that $t_\lambda^+(u)u\in \mathcal{N}_{\lambda}^+$ and $t \mapsto J_\lambda(t u_k)$ is decreasing in the interval $[0, t_n^+(u_k)]$, we deduce that
\begin{equation*}
    C_{\mathcal{N}^+_{\lambda}}\leq J_{\lambda}(t_\lambda^+(u)u) \leq J_{\lambda}(u)<\displaystyle\lim_{k\to \infty}J_{\lambda}(u_k)=C_{\mathcal{N}_{\lambda}^+}.
\end{equation*}
This is a contradiction proving that $u_k\to u$ in $X$. Since $J_{\lambda}$ is in $ C^0$ class we have that $J_{\lambda}(u_k)\to J_{\lambda}(u)=C_{\mathcal{N}_{\lambda}^+}$. This concludes the proof.
\end{proof}

\begin{lemma}\label{N^- longe do zero}
Suppose ($H_0$), ($V_1$), ($V_2$) and $\lambda\in (0,\lambda^*)$. Then, for each $w\in \mathcal{N}^-_{\lambda}\cup \mathcal{N}^0_{\lambda}$, there exists a constant $0<C=C(N,p,q)$ that does not depend on $\lambda$ such that $\Vert w\Vert\geq C$. In particular, the sets $\mathcal{N}_\lambda^-$ and $\mathcal{N}_\lambda^0$ are closed.
\end{lemma}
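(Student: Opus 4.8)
The plan is to extract from the defining conditions of $\mathcal{N}_{\lambda}^-\cup\mathcal{N}_{\lambda}^0$ a lower bound that is free of both $\lambda$ and the singular functional $A$. Fix $w\in \mathcal{N}_{\lambda}^-\cup\mathcal{N}_{\lambda}^0$. Since $w\in\mathcal{N}_{\lambda}$, the identity \eqref{fibra de J'} gives $\phi'_{\lambda,w}(1)=\Vert w\Vert^2-\lambda A(w)-B(w)=0$, whence $\lambda A(w)=\Vert w\Vert^2-B(w)$. On the other hand, membership in $\mathcal{N}_{\lambda}^-\cup\mathcal{N}_{\lambda}^0$ means $\phi''_{\lambda,w}(1)\le 0$; substituting the value of $\lambda A(w)$ into \eqref{fibra de J''} and simplifying, I would obtain the purely geometric inequality $(2-q)\Vert w\Vert^2\le (2p-q)B(w)$, in which neither $\lambda$ nor $A$ appears. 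This elimination is the conceptual heart of the argument, since it is exactly what will make the resulting constant independent of $\lambda$.

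Next I would bound $B(w)$ from above by a power of $\Vert w\Vert$ with a constant depending only on the fixed data. Applying the Stein--Weiss inequality (Proposition \ref{Stein-Weiss}) with $r_1=s_1=\tfrac{2N}{2N-2\alpha-\mu}=:r$, exactly as in \eqref{LI2}, yields $B(w)\le C\Vert b\vert w\vert^p\Vert_r^2$. Using $b\in L^\infty(\mathbb{R}^N)$ from $(H_0)$ and then the continuous embedding $H^1(\mathbb{R}^N)\hookrightarrow L^{pr}(\mathbb{R}^N)$ from Lemma \ref{imerso cont e comp} — legitimate because $2<pr<2^*$ — I would arrive at $B(w)\le C_*\Vert w\Vert^{2p}$, where $C_*$ depends only on $N,p,q,\alpha,\mu$, on $\Vert b\Vert_\infty$, and on the Sobolev constant, hence not on $\lambda$. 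Combining this with the inequality of the previous step and dividing by $\Vert w\Vert^2>0$ (recall $w\in X_+$ forces $w\ne 0$) gives $\Vert w\Vert^{2p-2}\ge (2-q)\big/\big((2p-q)C_*\big)$, that is, $\Vert w\Vert\ge C:=\big((2-q)/((2p-q)C_*)\big)^{1/(2p-2)}>0$, uniform in $\lambda$.

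For the closedness claim I would argue by sequences. Let $(w_k)\subset \mathcal{N}_{\lambda}^-$ (resp. $\mathcal{N}_{\lambda}^0$) with $w_k\to w$ in $X$. The uniform bound just proved gives $\Vert w\Vert\ge C>0$, so $w\ne 0$, and since the nonnegative cone is closed, $w\in X_+$; this is precisely where the lower bound is needed, to prevent the limit from degenerating to $0$ (where the Nehari condition is vacuous). By Proposition \ref{J continuo1} the functionals $A$ and $B$ are continuous, so $w\mapsto\phi'_{\lambda,w}(1)$ and $w\mapsto\phi''_{\lambda,w}(1)$ are continuous on $X_+$; passing to the limit in $\phi'_{\lambda,w_k}(1)=0$ and $\phi''_{\lambda,w_k}(1)\le 0$ yields $\phi'_{\lambda,w}(1)=0$ and $\phi''_{\lambda,w}(1)\le 0$, i.e. $w\in\mathcal{N}_{\lambda}^-\cup\mathcal{N}_{\lambda}^0$. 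Finally, for $\lambda\in(0,\lambda^*)$ Lemma \ref{N0lambda*}$(a)$ gives $\mathcal{N}_{\lambda}^0=\emptyset$, so $\mathcal{N}_{\lambda}^-\cup\mathcal{N}_{\lambda}^0=\mathcal{N}_{\lambda}^-$ is closed, while $\mathcal{N}_{\lambda}^0=\emptyset$ is trivially closed.

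The step I expect to require the most care is the second one: producing the estimate $B(w)\le C_*\Vert w\Vert^{2p}$ with a constant genuinely independent of $\lambda$. The delicate points are verifying that $r=\tfrac{2N}{2N-2\alpha-\mu}$ is admissible for the Stein--Weiss inequality, i.e. that \eqref{des1} holds — here $N/r'=(2\alpha+\mu)/2$, so the requirement $\alpha<N/r'$ reduces to $\mu>0$, as already observed after \eqref{LI2} — and that $pr$ lies strictly between $2$ and $2^*$ so that the embedding of Lemma \ref{imerso cont e comp} applies. Once these are in place the constant collects only fixed quantities, and the desired $\lambda$-uniformity follows automatically from having eliminated $\lambda A(w)$ in the first step.
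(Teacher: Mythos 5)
Your proposal is correct and follows essentially the same route as the paper: you eliminate $\lambda A(w)$ by combining $\phi'_{\lambda,w}(1)=0$ with $\phi''_{\lambda,w}(1)\le 0$ to get $(2-q)\Vert w\Vert^2\le(2p-q)B(w)$, then bound $B(w)\le C_*\Vert w\Vert^{2p}$ via the Stein--Weiss inequality with $r_1=s_1=\tfrac{2N}{2N-2\alpha-\mu}$, $b\in L^\infty$, and the embedding into $L^{pr}$ with $2<pr<2^*$, exactly as in the paper's \eqref{d27}--\eqref{d30} (your exponent $1/(2p-2)$ is in fact the correct one, where the paper writes $1/(pr-2)$). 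Your sequential argument for closedness, using continuity of $A$ and $B$, the uniform lower bound to keep the limit away from $0$, and $\mathcal{N}_\lambda^0=\emptyset$ for $\lambda\in(0,\lambda^*)$, supplies details the paper merely asserts with ``in particular,'' and is the natural completion of the same argument.
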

\begin{proof}
 Let us consider $w\in \mathcal{N}_{\lambda}^-$ fixed. According to \eqref{N^-2} we deduce that $\phi_{\lambda,w}''(1)\leq 0$. Moreover, using \eqref{fibra de J''}, we ensure that
\begin{equation}\label{d27}
\Vert w\Vert^2-(q-1)\lambda A(w)-(2p-1)B(w)\leq 0.
 \end{equation}
In light of \eqref{fibra de J'} we infer that
$
\lambda A(w)=\Vert u\Vert^2-B(w).
$
Under these conditions, by using \eqref{d27}, we see that
\begin{equation}\label{d29}
\Vert w\Vert^2\leq \displaystyle\frac{B(w)(2p-q)}{2-q}.
\end{equation}
Now, by using the Stein-Weiss inequality with $s=r=\frac{2N}{2N-2\alpha-\mu}$, we verify that $\frac{\alpha}{N}<1-\frac{1}{r}$ and
$$
B(w)\leq c\Vert b\vert u\vert^p\Vert_r^2.
$$
Recall also that $2< pr< 2^*$. In virtue of Hölder's inequality and Proposition \ref{imerso cont e comp} we deduce that
\begin{equation}\label{d30}
\Vert b\vert w\vert^p\Vert_r^r\leq \displaystyle\int_{\mathbb R^N}b^r\vert w\vert^{pr}dx\leq\Vert b\Vert_{\infty}^r\Vert w\Vert_{pr}^{pr}
\leq \Vert b\Vert_{\infty}^rS^{pr}_{pr}\Vert w\Vert^{pr}.
\end{equation}
It follows from \eqref{d29} and \eqref{d30} that
$$\Vert w\Vert\geq\left(\displaystyle\frac{1}{c_1}\right)^{\frac{1}{pr-2}}>0,\, \, w \in \mathcal{N}_{\lambda}^-$$
where $c_1=c_1(p,q,r) > 0.$ This completes the proof.
\end{proof}

\begin{lemma}\label{J(u)=C_ N-1}
 Suppose ($H_0$), ($V_1$), ($V_2$) and $\lambda\in (0,\lambda^*)$. Consider $(w_k)\subseteq\mathcal{N}_{\lambda}^-$ a minimizer sequence for the functional $ J_{\lambda}$ in $\mathcal{N}_{\lambda}^-.$ Then there exists $ w_\lambda \in \mathcal{N}_{\lambda}^- $such that, up to a subsequence, $ w_k \to w_\lambda$ in $X$. Furthermore, we obtain that $C_{\mathcal{N}_{\lambda}^-}=J_{\lambda}(w_\lambda)$.
\end{lemma}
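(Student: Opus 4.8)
The plan is to follow the scheme already used for $\mathcal{N}_\lambda^+$ in Lemma \ref{J(u)=C_ N+1}, the essential new feature being that $C_{\mathcal{N}_\lambda^-}$ need not be negative, so the nontriviality of the weak limit cannot be read off from the sign of the energy and must instead be extracted from the uniform lower bound furnished by Lemma \ref{N^- longe do zero}. First I would recall that $J_\lambda$ is coercive on $\mathcal{N}_\lambda$, so the minimizing sequence $(w_k)\subset\mathcal{N}_\lambda^-$ is bounded in $X$; passing to a subsequence, $w_k\rightharpoonup w$ in $X$ and, by Lemma \ref{imerso cont e comp}, $w_k\to w$ in $L^t(\mathbb{R}^N)$ for every $t\in[2,2^*)$. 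Since $b\in L^\infty(\mathbb{R}^N)$ and $2<pr<2^*$, this strong convergence yields $A(w_k)\to A(w)$ and $B(w_k)\to B(w)$.

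Next I would establish that $w\neq 0$. Because each $w_k\in\mathcal{N}_\lambda^-$, the estimate \eqref{d29} gives $\Vert w_k\Vert^2\le \frac{2p-q}{2-q}B(w_k)$, while Lemma \ref{N^- longe do zero} provides a constant $C>0$, independent of $\lambda$, with $\Vert w_k\Vert\ge C$. Combining these, $B(w_k)\ge \frac{2-q}{2p-q}C^2>0$ for all $k$, and letting $k\to\infty$ forces $B(w)>0$; in particular $w\neq 0$.

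The heart of the argument is the strong convergence, which I would prove by contradiction. Suppose $w_k\not\to w$; then, along a subsequence, $\Vert w\Vert^2<\lim_k\Vert w_k\Vert^2$. Since $\lambda<\lambda^*\le\Lambda_n(w)$, Proposition \ref{analise dos lambidas} furnishes the projection $w_\lambda:=t_\lambda^-(w)w\in\mathcal{N}_\lambda^-$; write $t^-=t_\lambda^-(w)$. Evaluating $v\mapsto J_\lambda'(v)v=\Vert v\Vert^2-\lambda A(v)-B(v)$ at $t^-w_k$ and using the weak lower semicontinuity of the norm together with the weak continuity of $A,B$ and the \emph{strict} drop of the norm, I get $0=J_\lambda'(t^-w)(t^-w)<\liminf_k J_\lambda'(t^-w_k)(t^-w_k)$, so $\phi_{\lambda,w_k}'(t^-)>0$ for $k$ large; by \eqref{fibra de J'} and the sign analysis in Proposition \ref{analise dos lambidas} this localizes $t_\lambda^+(w_k)<t^-<t_\lambda^-(w_k)=1$. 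Hence $t^-$ lies in the interval $[t_\lambda^+(w_k),1]$ on which $t\mapsto\phi_{\lambda,w_k}(t)$ is increasing, whence $J_\lambda(t^-w_k)=\phi_{\lambda,w_k}(t^-)\le\phi_{\lambda,w_k}(1)=J_\lambda(w_k)$. On the other hand, the strict norm drop and Lemma \ref{Jsemi contínua} give
\[
J_\lambda(w_\lambda)=J_\lambda(t^-w)<\liminf_k J_\lambda(t^-w_k)\le\liminf_k J_\lambda(w_k)=C_{\mathcal{N}_\lambda^-}.
\]
Since $w_\lambda\in\mathcal{N}_\lambda^-$ forces $J_\lambda(w_\lambda)\ge C_{\mathcal{N}_\lambda^-}$, this is a contradiction, so $w_k\to w$ strongly in $X$. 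To finish, the closedness of $\mathcal{N}_\lambda^-$ (Lemma \ref{N^- longe do zero}) gives $w\in\mathcal{N}_\lambda^-$, and since $J_\lambda\in C^0(X_+,\mathbb{R})$ (Proposition \ref{J continuo1}) we obtain $J_\lambda(w)=\lim_k J_\lambda(w_k)=C_{\mathcal{N}_\lambda^-}$; taking $w_\lambda=w$ concludes the proof.

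I expect the strong convergence step to be the main obstacle. The delicate point is the correct localization $t_\lambda^+(w_k)<t^-<1$: one must exploit the weak lower semicontinuity of $v\mapsto J_\lambda'(v)v$ to compare $t^-$ with the projection parameters of the $w_k$, and it is crucial to use only the monotonicity of $\phi_{\lambda,w_k}$ on $[t_\lambda^+(w_k),1]$ — not global maximality of $\phi_{\lambda,w_k}$, which may fail precisely when $J_\lambda(w_k)<0$ — in order to deduce $J_\lambda(t^-w_k)\le J_\lambda(w_k)$.
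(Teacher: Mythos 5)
Your proof is correct and follows essentially the same route as the paper: coercivity and Lemma \ref{N^- longe do zero} to get a bounded sequence with nontrivial weak limit, then the contradiction argument for strong convergence via the weak lower semicontinuity of $v\mapsto J_\lambda'(v)v$, the localization $t_\lambda^+(w_k)<t_\lambda^-(w)<t_\lambda^-(w_k)=1$, and the monotonicity of $\phi_{\lambda,w_k}$ on that interval. Your variant for $w\neq 0$ (bounding $B(w_k)$ from below via \eqref{d29} rather than deriving $\Vert w_k\Vert\to 0$ by contradiction) is a cosmetic difference, as both hinge on the same uniform lower bound from Lemma \ref{N^- longe do zero}.
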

\begin{proof}
   Firstly, we shall prove that there exists $w\in\mathcal{N}^-_{\lambda}$ such that $ J_{\lambda}(w)= C_{\mathcal{N}_{\lambda}^-}$. Let us consider $(w_k)\subseteq \mathcal{N}_{\lambda}^-$ a sequence such that $J_{\lambda}(w_k)\to C_{\mathcal{N}_{\lambda}^-}$. Since $J_{\lambda}$ is coercive in the Nehari set we obtain that $(w_k)$ is bounded. Hence, up to a subsequence, $w_k\rightharpoonup w$ in $X$.
  Now, we claim that $w\neq 0$. Otherwise, $w=0$ and
  $w_k\rightharpoonup 0$ in $X$. Consequently, $A(u_k)\to 0$ and $B(u_k)\to 0$. Under these conditions, by using \eqref{fibra de J'}, we have that $\Vert w_k\Vert\to 0$
which is a contradiction with Lemma \ref{N^- longe do zero}. Therefore, we deduce that $w\neq 0$.

 At this stage, we shall prove $w_k\to w$ in $X$. Once again the proof follows by arguing by contradiction. Let us assume that $w_k\rightharpoonup w$ in $X$ and $w_k\not\rightarrow w$ in $X$. As a consequence,  $$\Vert w\Vert<\displaystyle\liminf_{k\to \infty}\Vert w_k\Vert.$$
Since $w\neq 0$ there exists $t_\lambda^-(w)>0$ such that $t_\lambda^-(w)w \in \mathcal{N}_{\lambda}^-$. Now, due to the fact that the map $w\mapsto J_{\lambda}'(w)w$ is weakly lower semicontinuous, we infer also that
$$ 0=J_{\lambda}'(t_\lambda^-(w)w)w<\displaystyle\liminf_{k\to\infty} J_{\lambda}'(t_\lambda^-(w)w_k)w_k. $$
Therefore, $J_{\lambda}'(t_\lambda^-(w)w_k)w_k>0$ holds for each $k$ large enough. Recall also that the map $t \mapsto J_\lambda(t w_k)$ is increasing in the interval $ (t_\lambda^+(w_k),t_\lambda^-(w_k))$. In particular, by using the fact that $t_\lambda^-(w)\in (t_\lambda^+(w_k),t_\lambda^-(w_k))$ and taking into account that $w_k \in \mathcal{N}_{\lambda}^-$, we deduce that
$$C_{\mathcal{N}_{\lambda}^-}\leq J_{\lambda}(t_\lambda^-(w)w)<\displaystyle \lim_{k \to \infty}\inf J_{\lambda}(t_\lambda^-(w)w_k)\leq \displaystyle \lim_{k \to \infty}\inf J_{\lambda}(w_k)=C_{\mathcal{N}_{\lambda}^-}.$$
This is a contradiction proving that $w_k \to w$ in $X.$ This ends the proof.
\end{proof}

\begin{lemma}\label{L1}
Suppose ($H_0$), ($V_1$), ($V_2$) and $\lambda\in (0,\lambda^*)$. Let $u_{\lambda}\in\mathcal{N_{\lambda}^+}$ and $w_{\lambda}\in \mathcal{N}_{\lambda}^-$ such that $J_{\lambda} (u_{\lambda})=C_{\mathcal{N}_{\lambda}^+}$ and $J_{\lambda} (w_{\lambda})=C_{\mathcal{N}_{\lambda}^-}.$ Then, for each $\psi\in X_+$, there exists $\varepsilon_0>0$ such that
    \begin{itemize}
        \item [$a)$] $J_{\lambda}(u_{\lambda}+\varepsilon\psi)\geq J_{\lambda}(u_{\lambda}), 0\leq\varepsilon\leq\varepsilon_0$;
        \item[$b)$] $t_{\lambda}^-(w_{\lambda}+\varepsilon\psi)\to 1$ as $\varepsilon\to 0$ where $t^-_{\lambda}(w_{\lambda}+\varepsilon\psi)$ is the unique positive number such that $t^-_{\lambda}(w_{\lambda}+\varepsilon\psi)(w_{\lambda}+\varepsilon\psi) \in \mathcal{N}_\lambda^-$ holds for each $\psi\in X_+, 0\leq\varepsilon\leq\varepsilon_0$.
    \end{itemize}
    \begin{proof}

$a)$ Let us consider any fixed function $\psi\in X_+$. It follows from \eqref{fibra de J''} that
$$\phi''_{\lambda,u_{\lambda}+\varepsilon\psi}(1)=\Vert u_{\lambda}+\varepsilon\psi\Vert^2-(q-1)\lambda A(u_{\lambda}+\varepsilon\psi)-(2p-1)B(u_{\lambda}+\varepsilon\psi).$$
Notice also that the function $\varepsilon\mapsto\phi_{\lambda,u_{\lambda}+\varepsilon\psi}$ is continuous. On the other hand, we know that $\phi''_{\lambda,u_{\lambda}}(1)>0$ and $u_{\lambda}\in \mathcal{N}_{\lambda}^+$. Hence, there exists $\varepsilon_0>0$ such that $\phi''_{\lambda,u_{\lambda}+\varepsilon\psi}(1)> 0$ for any $0\leq\varepsilon\leq \varepsilon_0$. Under these conditions, taking into account Proposition \ref{analise dos lambidas} and Lemma \ref{J(u)=C_ N+1}, we deduce that
 \begin{eqnarray*}
J_{\lambda}( u_{\lambda}+\varepsilon\psi) \nonumber &=& \phi_{\lambda, u_{\lambda}+\varepsilon\psi}(1)\geq \phi_{\lambda, u_{\lambda}+\varepsilon\psi}(t_{\lambda}^+(u_{\lambda}+\varepsilon\psi)) = J_{\lambda}(t_{\lambda}^+(u_{\lambda}+\varepsilon\psi)(u_{\lambda}+\varepsilon\psi))\geq J_{\lambda}(u_{\lambda}).
\end{eqnarray*}
$b)$ Define the auxiliary function $F:(0,\infty)\times\mathbb R^3\to \mathbb R$ given by $F(t,e,f,g)= et-\lambda t^{q-1}f-t^{2p-1}g$.
Recall also that
$$F(1,e_1,f_1,g_1)=e_1-\lambda f_1-g_1=\phi'_{\lambda, w_{\lambda}}(1)=0.$$
Here we emphasize that $e_1=\Vert w_{\lambda}\Vert^2$, $f_1=A(w_{\lambda})$ and $g_1=B(w_{\lambda})$. Now, by using the fact that $w_{\lambda}\in \mathcal{N}_{\lambda}^-$, we mention that
$$\displaystyle\frac{dF}{dt}(1,e_1,f_1,g_1)=e_1-(q-1)\lambda f_1-(2p-1)g_1=\phi''_{\lambda, w_{\lambda}}(1)<0.$$
It follows from the Implicit Function Theorem (see \cite[Remark 4.2.3]{DRABEK}) that there exists $\epsilon_1>0$ and
$t_\epsilon=t(\Vert w_{\lambda}+\varepsilon\psi\Vert^2,A(w_{\lambda}+\varepsilon\psi),B(w_{\lambda}+\varepsilon\psi)),~0<\epsilon<\epsilon_1$ satisfying
$$F(t_\epsilon,\Vert w_{\lambda}+\varepsilon\psi\Vert^2, A(w_{\lambda}+\varepsilon\psi), B(w_{\lambda}+\varepsilon\psi))=0\qquad\mbox{and}\qquad\displaystyle\frac{dF}{dt}(t_\epsilon,\Vert w_{\lambda}+\varepsilon\psi\Vert^2,A(w_{\lambda}+\varepsilon\psi),B(w_{\lambda}+\varepsilon\psi))<0.$$
It follows also from Proposition \ref{analise dos lambidas} that $t_\epsilon=t^-_{\lambda}(w_{\lambda}+\varepsilon\psi)$ holds
for each $\varepsilon>0$ small enough. Moreover, we deduce that $t: \mathcal{B} \to \mathcal{A}$ is  in $C^{1}$ class where  $\mathcal{A}$ and $\mathcal{B}$ are open neighborhoods of $1$ and $(e_1, f_1, g_1)$, respectively. Furthermore, by using the continuity of $\varepsilon \to t_{\varepsilon}$ we deduce that
$t_{\lambda}^-(w_{\lambda}+\varepsilon\psi)\to 1=t^-_{\lambda}(w_{\lambda})$ as $\varepsilon\to 0.$
This finishes the proof.
\end{proof}
\end{lemma}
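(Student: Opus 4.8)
The plan is to handle the two items by different mechanisms: item $a)$ is a local comparison that exploits the convexity of the fibering map near its first critical point, while item $b)$ is a direct application of the Implicit Function Theorem to the equation $\phi'_{\lambda,\cdot}(t)=0$. In both cases the starting observation is that, by Proposition \ref{J continuo1}, the maps $\varepsilon \mapsto A(w_\lambda+\varepsilon\psi)$ and $\varepsilon\mapsto B(w_\lambda+\varepsilon\psi)$ are continuous, and $\varepsilon\mapsto \|w_\lambda+\varepsilon\psi\|^2$ is smooth, so every fibering quantity depends continuously on $\varepsilon$.

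For item $a)$, fix $\psi\in X_+$ and set $v_\varepsilon=u_\lambda+\varepsilon\psi\in X_+$. By the continuity just noted together with \eqref{fibra de J''}, the map $\varepsilon\mapsto\phi''_{\lambda,v_\varepsilon}(1)$ is continuous, and since $u_\lambda\in\mathcal{N}_{\lambda}^+$ gives $\phi''_{\lambda,u_\lambda}(1)>0$, I may choose $\varepsilon_0>0$ with $\phi''_{\lambda,v_\varepsilon}(1)>0$ for all $0\le\varepsilon\le\varepsilon_0$. The structural fact I would establish first is that $t\mapsto\phi''_{\lambda,v}(t)$ is \emph{strictly decreasing}: differentiating \eqref{fibra de J''} and using $0<q<1<p$ shows $\phi'''_{\lambda,v}<0$, so $\phi''_{\lambda,v}$ vanishes at a single $t_0$ with $t^+_\lambda(v)<t_0<t^-_\lambda(v)$ by Proposition \ref{analise dos lambidas}. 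Hence $\phi''_{\lambda,v_\varepsilon}(1)>0$ forces $1<t^-_\lambda(v_\varepsilon)$. On $[0,t^-_\lambda(v_\varepsilon)]$ the fibering map decreases to its local minimum at $t^+_\lambda(v_\varepsilon)$ and then increases, whence $\phi_{\lambda,v_\varepsilon}(1)\ge\phi_{\lambda,v_\varepsilon}(t^+_\lambda(v_\varepsilon))$. Since $t^+_\lambda(v_\varepsilon)v_\varepsilon\in\mathcal{N}_{\lambda}^+$ and $u_\lambda$ realizes $C_{\mathcal{N}_{\lambda}^+}$, I conclude
$$J_\lambda(v_\varepsilon)=\phi_{\lambda,v_\varepsilon}(1)\ge J_\lambda\bigl(t^+_\lambda(v_\varepsilon)v_\varepsilon\bigr)\ge C_{\mathcal{N}_{\lambda}^+}=J_\lambda(u_\lambda).$$

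For item $b)$, I would introduce the $C^1$ function $F(t,e,f,g)=et-\lambda t^{q-1}f-t^{2p-1}g$ on $(0,\infty)\times\mathbb R^3$, so that $t^-_\lambda(v)$ is characterized as the value at which $F(t,\|v\|^2,A(v),B(v))=0$ with $\partial_t F<0$. At $\varepsilon=0$ one has $F(1,\|w_\lambda\|^2,A(w_\lambda),B(w_\lambda))=\phi'_{\lambda,w_\lambda}(1)=0$, while $\partial_t F$ there equals $\phi''_{\lambda,w_\lambda}(1)<0$ because $w_\lambda\in\mathcal{N}_{\lambda}^-$. The Implicit Function Theorem then yields a $C^1$ branch $t=t(e,f,g)$ near $(\|w_\lambda\|^2,A(w_\lambda),B(w_\lambda))$ with value $1$ there and $\partial_t F<0$ along it; composing with the continuous maps $\varepsilon\mapsto(\|w_\lambda+\varepsilon\psi\|^2,A(w_\lambda+\varepsilon\psi),B(w_\lambda+\varepsilon\psi))$ produces, for small $\varepsilon$, a root of $\phi'_{\lambda,w_\lambda+\varepsilon\psi}$ at which $\phi''<0$. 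By the uniqueness in Proposition \ref{analise dos lambidas} this root is exactly $t^-_\lambda(w_\lambda+\varepsilon\psi)$, and continuity gives $t^-_\lambda(w_\lambda+\varepsilon\psi)\to1$ as $\varepsilon\to0$.

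The step I expect to be the genuine obstacle is the comparison $\phi_{\lambda,v_\varepsilon}(1)\ge\phi_{\lambda,v_\varepsilon}(t^+_\lambda(v_\varepsilon))$ in item $a)$: knowing $\phi''_{\lambda,v_\varepsilon}(1)>0$ is not enough by itself, and one must locate $1$ correctly relative to the two critical points $t^+_\lambda(v_\varepsilon)<t^-_\lambda(v_\varepsilon)$, for which the monotonicity of $t\mapsto\phi''_{\lambda,v}(t)$ (hence the single sign change of $\phi''$) is the essential ingredient. A secondary delicate point is that $A$ is only of class $C^0$; this is harmless here, since in item $b)$ the quantity $A(v)$ enters $F$ merely as a continuous coefficient and the differentiability demanded by the Implicit Function Theorem is only with respect to $t$, so no derivative of $A$ is ever required.
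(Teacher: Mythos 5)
Your proposal is correct and follows essentially the same route as the paper: part $a)$ rests on the continuity of $\varepsilon\mapsto\phi''_{\lambda,u_{\lambda}+\varepsilon\psi}(1)$ together with the comparison $\phi_{\lambda,v_\varepsilon}(1)\geq\phi_{\lambda,v_\varepsilon}(t^+_{\lambda}(v_\varepsilon))$ and the minimality of $u_{\lambda}$, and part $b)$ applies the Implicit Function Theorem to exactly the same auxiliary function $F(t,e,f,g)=et-\lambda t^{q-1}f-t^{2p-1}g$, differentiating only in $t$ so that the mere continuity of $A$ suffices. Your explicit check that $\phi'''_{\lambda,v}<0$ (hence $\phi''_{\lambda,v}$ changes sign exactly once, which locates $1$ strictly below $t^-_{\lambda}(v_\varepsilon)$) is a sound filling-in of a step the paper leaves implicit in its appeal to Proposition \ref{analise dos lambidas}.
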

\begin{lemma}\label{L2}
 Suppose ($H_0$), ($V_1$), ($V_2$) and $\lambda\in (0,\lambda^*)$. Let $u_{\lambda}\in\mathcal{N_{\lambda}^+}$ and $w_{\lambda}\in \mathcal{N}_{\lambda}^-$ such that $J_{\lambda} (u_{\lambda})=C_{\mathcal{N}_{\lambda}^+}$ and $J_{\lambda} (w_{\lambda})=C_{\mathcal{N}_{\lambda}^-}.$ Then, for each $\varphi \in X_+$, we obtain that $a\vert u_{\lambda}\vert^{q-2}u\varphi$ and $a\vert w_{\lambda}\vert^{q-2}w\varphi \in L^1(\mathbb R^N).$ Moreover, we obtain that
\begin{equation}\label{E1}
\displaystyle\int\limits_{\mathbb R^N}(\nabla u_{\lambda}\nabla \varphi+V(x)u_{\lambda}\varphi) dx- \lambda H(u_{\lambda},\varphi)- D(u_{\lambda},\varphi)\geq 0,
\end{equation}
and
\begin{equation}\label{E2}
\displaystyle\int\limits_{\mathbb R^N}(\nabla w_{\lambda}\nabla \varphi+V(x)w_{\lambda}\varphi) dx- \lambda H(w_{\lambda},\varphi)- D(w_{\lambda},\varphi)\geq 0,
\end{equation}
where $D$ and $H$ were defined in \eqref{HD(u,phi)}. Furthermore, $u_{\lambda},w_{\lambda}>0$ a.e in $\mathbb R^N$.
\end{lemma}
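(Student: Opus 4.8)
The plan is to exploit that $u_\lambda$ and $w_\lambda$ are minimizers, combined with the one-sided perturbation estimates of Lemma \ref{L1}, following the scheme already used in the proof of Lemma \ref{3 passos}. The only genuine difficulty is the singular term $A$, which is not differentiable; I would handle it throughout by a forward difference quotient together with the monotone convergence coming from the concavity of $s\mapsto s^q$ on $[0,\infty)$ (equivalently, by the Mean Value Theorem and Fatou's Lemma exactly as in Lemma \ref{3 passos}). The remaining terms cause no trouble, since $B\in C^1(X_+,\mathbb{R})$ by Proposition \ref{J continuo1} and $\tfrac{1}{2p}B'(u)\psi=D(u,\psi)$, while $\tfrac12\|\cdot\|^2$ is smooth.

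For $u_\lambda$, fix $\psi\in X_+$. By Lemma \ref{L1}(a) there is $\varepsilon_0>0$ with $J_\lambda(u_\lambda+\varepsilon\psi)\ge J_\lambda(u_\lambda)$ for $0\le\varepsilon\le\varepsilon_0$, so the forward difference quotient of $J_\lambda$ is nonnegative. Rearranging this yields
\[
\frac{\lambda}{q}\,\frac{A(u_\lambda+\varepsilon\psi)-A(u_\lambda)}{\varepsilon}
\le
\frac{1}{2}\,\frac{\|u_\lambda+\varepsilon\psi\|^2-\|u_\lambda\|^2}{\varepsilon}
-\frac{1}{2p}\,\frac{B(u_\lambda+\varepsilon\psi)-B(u_\lambda)}{\varepsilon}.
\]
Since $u_\lambda,\psi\ge 0$, the integrand $\big[(u_\lambda+\varepsilon\psi)^q-u_\lambda^q\big]/\varepsilon$ is nonnegative and increases as $\varepsilon\downarrow 0$ to $q\,u_\lambda^{q-1}\psi$ (with value $+\infty$ where $u_\lambda=0$), so by monotone convergence the left-hand side tends to $\lambda\int a\,u_\lambda^{q-1}\psi\,dx=\lambda H(u_\lambda,\psi)$, whereas the right-hand side tends to the finite limit $\langle u_\lambda,\psi\rangle-D(u_\lambda,\psi)$. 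This forces $a\,u_\lambda^{q-1}\psi\in L^1(\mathbb{R}^N)$ and gives $\langle u_\lambda,\psi\rangle-\lambda H(u_\lambda,\psi)-D(u_\lambda,\psi)\ge 0$, which is \eqref{E1}. Choosing $\psi$ strictly positive on an arbitrary set of positive measure then shows $u_\lambda>0$ a.e., since otherwise $H(u_\lambda,\psi)=+\infty$.

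For $w_\lambda$ the extra obstacle is that the minimization is constrained to $\mathcal N_\lambda^-$, so $J_\lambda(w_\lambda+\varepsilon\psi)\ge J_\lambda(w_\lambda)$ need not hold. I would circumvent this with the projection $t_\varepsilon:=t_\lambda^-(w_\lambda+\varepsilon\psi)$, which satisfies $t_\varepsilon\to 1$ by Lemma \ref{L1}(b). Writing $\phi_{\lambda,w_\lambda+\varepsilon\psi}(t)=J_\lambda(t(w_\lambda+\varepsilon\psi))$ and using that $t=1$ is a local maximum of $\phi_{\lambda,w_\lambda}$ (Proposition \ref{analise dos lambidas}), I obtain, for $\varepsilon$ small, the sandwich
\[
\phi_{\lambda,w_\lambda}(t_\varepsilon)\;\le\;\phi_{\lambda,w_\lambda}(1)=J_\lambda(w_\lambda)=C_{\mathcal N_\lambda^-}\;\le\;\phi_{\lambda,w_\lambda+\varepsilon\psi}(t_\varepsilon),
\]
hence $J_\lambda\big(t_\varepsilon(w_\lambda+\varepsilon\psi)\big)-J_\lambda(t_\varepsilon w_\lambda)\ge 0$. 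Here the two terms share the same scaling factor $t_\varepsilon$, so dividing by $\varepsilon$ and letting $\varepsilon\to 0^+$ (using $t_\varepsilon\to 1$, the differentiability of the smooth terms, and the same monotone-convergence bound on the singular term) reproduces the computation above with $w_\lambda$ in place of $u_\lambda$. This gives $a\,w_\lambda^{q-1}\psi\in L^1(\mathbb{R}^N)$, the inequality \eqref{E2}, and $w_\lambda>0$ a.e. The step I expect to be the crux is this monotone/Fatou passage to the limit in the singular term: it is where an inequality (rather than an identity) is produced and where the $L^1$-integrability of $a\,u_\lambda^{q-1}\psi$, $a\,w_\lambda^{q-1}\psi$ and the strict positivity of $u_\lambda,w_\lambda$ are simultaneously extracted.
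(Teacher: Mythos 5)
Your proposal is correct and follows essentially the same route as the paper: part (a) of Lemma \ref{L1} plus the forward difference quotient for $u_{\lambda}$, and for $w_{\lambda}$ the same sandwich $\phi_{\lambda,w_{\lambda}}(t_{\varepsilon})\leq J_{\lambda}(w_{\lambda})\leq J_{\lambda}\bigl(t_{\varepsilon}(w_{\lambda}+\varepsilon\psi)\bigr)$ with $t_{\varepsilon}=t_{\lambda}^{-}(w_{\lambda}+\varepsilon\psi)\to 1$ from Lemma \ref{L1}(b), which is exactly the paper's inequality \eqref{E3}. The only (immaterial) difference is that you pass to the limit in the singular term by monotone convergence of the concave difference quotient, whereas the paper uses the Mean Value Theorem together with Fatou's Lemma; both yield the finiteness of $\int a u_{\lambda}^{q-1}\varphi\,dx$, the a.e.\ positivity, and the inequalities \eqref{E1}--\eqref{E2} in the same way.
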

\begin{proof}
    Let us consider a fixed function $\varphi\in X_+$. Firstly, we shall prove the inequality given in \eqref{E1}. In light of Lemma \ref{L1} item $a)$ we know that
    $
        J_{\lambda}(u_\lambda+\varepsilon\varphi)\geq J_{\lambda}(u_\lambda),\;\;\varepsilon\in (0,\varepsilon_0).
    $
    Consequently, we obtain that
    \begin{eqnarray*}
        \displaystyle\frac{1}{2}\Vert u_\lambda+\varepsilon\varphi\Vert^2-\displaystyle\frac{\lambda}{q}A(u_\lambda+\varepsilon\varphi)-\displaystyle\frac{1}{2p}B(u_\lambda+\varepsilon\varphi)\geq \displaystyle\frac{1}{2}\Vert u_\lambda\Vert^2-\displaystyle\frac{\lambda}{q}A(u_\lambda)-\displaystyle\frac{1}{2p}B(u_\lambda),
    \end{eqnarray*}
where $A$ and $B$ was defined in \eqref{AB(u)2}. As a consequence, we infer that
    \begin{eqnarray}\label{deriv-J}
        \displaystyle\frac{\lambda} {q}\displaystyle\frac{A(u_\lambda+\varepsilon\varphi)-A(u_\lambda)}{\varepsilon}\leq \displaystyle\frac{\Vert u_\lambda+\varepsilon\varphi\Vert^2-\Vert u_\lambda\Vert^2}{2\varepsilon}-\displaystyle\frac{1}{2p}\displaystyle\frac{B(u_\lambda+\varepsilon\varphi)-B(u_\lambda)}{\varepsilon}
    \end{eqnarray}
  holds for $\varepsilon>0$ small enough. Recall also that $\|.\|$ and $B$ are in $C^1$ class. Hence, the right-hand side given in \eqref{deriv-J} is uniformly bounded from above by a positive constant $M_\varphi$ which does not depend on $\epsilon$. Now, using the Mean Value Theorem, there exists $0<\theta=\theta(\epsilon)< \epsilon<1$ such that
    $$\int_{\R^N} a(x)(u_\lambda+\theta\varphi)^{q-1}\varphi<\frac{M_\varphi}{\lambda}.$$
It follows from Fatou's Lemma that
    $$\int_{\R^N} a(x)u_\lambda^{q-1}\varphi<\frac{M_\varphi}{\lambda}.$$

Now, we proceed to argue by contradiction. Assume that $|[u_\lambda=0]|>0$ holds. Given any $\delta>0$ we infer that
\begin{equation}\label{maxwell}
\frac{M_\varphi}{\lambda}>\displaystyle\int_{\mathbb R^N}a(x) u_\lambda^{q-1}\varphi dx>\displaystyle\int_{[u_\lambda<\delta]}a(x) u_\lambda^{q-1}\varphi dx>\displaystyle\frac{1}{\delta^{1-q}}\displaystyle\int_{[u_\lambda=0]}a(x)\varphi\to \infty,
\end{equation}
as $\delta\to 0$. This is a contradiction proving that $|[u_\lambda=0]|=0$ holds.
Thus $u_\lambda>0$ a.e in $\mathbb R^N$ and $a(x) u_\lambda^{q-1}\varphi\in L^1(\mathbb R^N)$ is satisfied for each $\varphi \in X_+$. Furthermore, by using \eqref{deriv-J}, we ensure that \eqref{E1} is now verified.

    Now, we are going to prove \eqref{E2}. It is important to observe that
    \begin{eqnarray*}
         J_{\lambda}(t_{\lambda}^-(w_\lambda+\varepsilon\varphi)(w_\lambda+\varepsilon\varphi))\geq J_{\lambda}(w_\lambda)=\phi_{\lambda,w_\lambda}(1)
        \geq \phi_{\lambda,w_\lambda}(t_{\lambda}^-(w_\lambda+\varepsilon\varphi))=J_{\lambda}(t_{\lambda}^-(w_\lambda+\varepsilon\varphi)w_\lambda).
    \end{eqnarray*}
   In particular, we infer that
   \begin{eqnarray*}
        \displaystyle\frac{1}{2}t^-_{\lambda}(w_\lambda+\varepsilon\varphi)^2\Vert w_\lambda+\varepsilon\varphi\Vert^2&-&\displaystyle\frac{\lambda}{q}t^-_{\lambda}(w_\lambda+\varepsilon\varphi)^q A(w_\lambda+\varepsilon\varphi)-\displaystyle\frac{1}{2p}t^-_{\lambda}(w_\lambda+\varepsilon\varphi)^{2p} B(w_\lambda+\varepsilon\varphi)\\
        &\geq& \displaystyle\frac{1}{2}t^-_{\lambda}(w_\lambda+\varepsilon\varphi)^2\Vert w_\lambda\Vert^2-\displaystyle\frac{\lambda}{q}t^-_{\lambda}(w_\lambda+\varepsilon\varphi)^q A(w_\lambda) - \displaystyle\frac{1}{2p}t^-_{\lambda}(w_\lambda+\varepsilon\varphi)^{2p} B(w_\lambda).
    \end{eqnarray*}
   The last assertion implies that
    \begin{equation}\label{E3}
        t^-_{\lambda}(w_\lambda+\varepsilon\varphi)^2\left[\frac{\Vert w_\lambda+\varepsilon\varphi\Vert^2-\Vert w_\lambda\Vert^2}{2\varepsilon}\right]-t^-_{\lambda}(w_\lambda+\varepsilon\varphi)^{2p}\left[\frac{B(w_\lambda+\varepsilon\varphi)-B(w_\lambda)}{2p\varepsilon}\right]
       \geq t^-_{\lambda}(w_\lambda+\varepsilon\varphi)^q \lambda\frac{A(w_\lambda+\varepsilon\varphi)-A(w_\lambda)}{q\varepsilon},\\
    \end{equation}
    where $\varepsilon>0$ is small enough. Under these conditions, by using \eqref{maxwell} and Lemma \ref{L1} (b), we consider the limit $\varepsilon\to 0$ proving that $w_\lambda>0$ a.e. in $\mathbb R^N$ and $a(x)w_\lambda^{q-1}\varphi\in L^1(\mathbb R^N)$ holds for each $\varphi \in X_+$. Therefore, by using \eqref{E3} and doing $\varepsilon\to 0$, we obtain
    $$\left<w_\lambda,\varphi\right>-\lambda H(w_\lambda,\varphi)-D(w_\lambda,\varphi)\geq 0,\;\;\varphi \in X_+.$$
    This completes the proof.
    \end{proof}
\begin{prop}\label{solucao P1}
     Suppose ($H_0$), ($V_1$), ($V_2$) and $\lambda\in (0,\lambda^*)$. Let $u_{\lambda}\in\mathcal{N_{\lambda}^+}$ and $w_{\lambda}\in \mathcal{N}_{\lambda}^-$ such that $J_{\lambda} (u_{\lambda})=C_{\mathcal{N}_{\lambda}^+}$ and $J_{\lambda} (w_{\lambda})=C_{\mathcal{N}_{\lambda}^-}.$ Then $u_{\lambda}$ and $w_{\lambda}$ are weak solutions to the Problem \eqref{P1}.
\end{prop}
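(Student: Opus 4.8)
The plan is to promote the one-sided inequalities \eqref{E1} and \eqref{E2}, which Lemma \ref{L2} supplies only for test functions $\varphi\in X_+$, into equalities valid for every $\varphi\in X$; by Definition \ref{sol fraca do probLema} this is exactly the certificate that $u_\lambda$ and $w_\lambda$ are weak solutions of \eqref{P1}. Since $J_\lambda$ is merely $C^0$ and $H(\cdot,\varphi)$ is ill-defined for sign-changing $\varphi$, the sign constraint cannot be removed by a direct differentiation. Instead I would test the variational inequalities against the admissible truncated perturbation $\Theta=(u_\lambda+\varepsilon\varphi)^+\in X_+$, reproducing the mechanism already used in Step $3$ of Lemma \ref{3 passos}.

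First I would treat $u_\lambda$. Fix an arbitrary $\varphi\in X$ and $\varepsilon>0$, and insert $\Theta=(u_\lambda+\varepsilon\varphi)^+$ into \eqref{E1}. Writing $\Theta=(u_\lambda+\varepsilon\varphi)-(u_\lambda+\varepsilon\varphi)^-$ and recalling that $u_\lambda\in\mathcal{N}_\lambda^+\subset\mathcal{N}_\lambda$ provides the Nehari identity $\langle u_\lambda,u_\lambda\rangle-\lambda H(u_\lambda,u_\lambda)-D(u_\lambda,u_\lambda)=\phi'_{\lambda,u_\lambda}(1)=0$, which cancels the zeroth-order contributions. On the set $[u_\lambda+\varepsilon\varphi\leq 0]$ one has $u_\lambda+\varepsilon\varphi\leq 0$, so the remaining singular and nonlocal contributions there are nonpositive and may be dropped while preserving the inequality. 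After dividing by $\varepsilon>0$ this leaves
\begin{equation*}
0\leq \langle u_\lambda,\varphi\rangle-\lambda H(u_\lambda,\varphi)-D(u_\lambda,\varphi)-\int_{[u_\lambda+\varepsilon\varphi\leq 0]}\big(\nabla u_\lambda\nabla\varphi+V(x)u_\lambda\varphi\big)\,dx .
\end{equation*}

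The crucial analytic point is that the last integral vanishes in the limit. Since $u_\lambda>0$ a.e. in $\mathbb R^N$ by Lemma \ref{L2}, the sets $[u_\lambda+\varepsilon\varphi\leq 0]$ shrink to a null set as $\varepsilon\to 0^+$; moreover $|\nabla u_\lambda\nabla\varphi+V(x)u_\lambda\varphi|\in L^1(\mathbb R^N)$ is an integrable dominator (by Cauchy--Schwarz, using $u_\lambda,\varphi\in X$). The Dominated Convergence Theorem then forces that integral to zero, so letting $\varepsilon\to 0^+$ yields $\langle u_\lambda,\varphi\rangle-\lambda H(u_\lambda,\varphi)-D(u_\lambda,\varphi)\geq 0$ for every $\varphi\in X$. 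Replacing $\varphi$ by $-\varphi$ reverses the inequality, hence equality holds for all $\varphi\in X$ and $u_\lambda$ is a weak solution of \eqref{P1}.

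For $w_\lambda$ the argument is parallel, starting from \eqref{E2} and the truncation $(w_\lambda+\varepsilon\varphi)^+$, but it additionally relies on Lemma \ref{L1}$(b)$, namely $t^-_\lambda(w_\lambda+\varepsilon\varphi)\to 1$ as $\varepsilon\to 0$, in order to compare the energy levels correctly along the $\mathcal{N}_\lambda^-$ projection before passing to the limit. I expect the main obstacle to lie precisely in justifying the passage to the limit of the singular term $H$ and the nonlocal term $D$ on the degenerating set $[w_\lambda+\varepsilon\varphi\leq 0]$: one must secure uniform $L^1$ domination, using $a\,w_\lambda^{q-1}\varphi\in L^1(\mathbb R^N)$ from Lemma \ref{L2} for the singular term and the Stein--Weiss estimate of Proposition \ref{Stein-Weiss} for $D$, so that the Dominated Convergence Theorem applies and the one-sided inequality upgrades to the required equality $\langle w_\lambda,\varphi\rangle=\lambda H(w_\lambda,\varphi)+D(w_\lambda,\varphi)$.
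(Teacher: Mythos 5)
Your proposal is correct and follows essentially the same route as the paper: test the one-sided inequalities \eqref{E1}--\eqref{E2} with the truncation $(u_\lambda+\varepsilon\varphi)^+$, cancel the zeroth-order terms via the Nehari identity $\phi'_{\lambda,u_\lambda}(1)=0$, drop the sign-definite contributions on $[u_\lambda+\varepsilon\varphi\leq 0]$, kill the remaining cross term $\varepsilon\int_{[u_\lambda+\varepsilon\varphi\leq 0]}(\nabla u_\lambda\nabla\varphi+V(x)u_\lambda\varphi)\,dx$ by dominated convergence (exactly \eqref{TCD-grad} and \eqref{d22}), and conclude equality by $\varphi\mapsto-\varphi$. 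Your closing concern about dominating $H$ and $D$ on the degenerating set is superfluous (those terms are discarded by sign before the limit, and Lemma \ref{L1}$(b)$ is only consumed inside Lemma \ref{L2}, not here), but this does not affect the validity of the argument.
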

    \begin{proof}
        First, we show that $u_{\lambda}$ is a weak solution to the Problem \eqref{P1}.
        Given any fixed function $\varphi\in X$ define $\Psi=(u_{\lambda}+\varepsilon\varphi)^+\in X_+$ where $\varepsilon>0$ is small enough. Notice also that $\Psi\in X_+$. Now, by using Lemma \ref{L2} and \eqref{E1}, we deduce that
        \begin{eqnarray}\label{E4}
        0 \nonumber &\leq &\displaystyle\int_{\mathbb R^N}(\nabla u_{\lambda}\nabla \Psi+V(x)u_{\lambda}\Psi) dx- \lambda\displaystyle\int_{\mathbb R^N}a(x)\vert u_{\lambda}^{q-2}u_{\lambda}\Psi dx- \displaystyle\int_{\mathbb R^N}\displaystyle\int_{\mathbb R^N}\displaystyle\frac{b(y)\vert u_{\lambda}\vert^pb(x)\vert u_{\lambda}\vert^{p-2}u_{\lambda}\Psi}{\vert x\vert^{\alpha}\vert x-y\vert^{\mu}\vert y\vert^{\alpha}}dxdy\\
        \nonumber &=& \left<u_{\lambda},u_{\lambda}+\varepsilon\varphi\right>-\lambda\displaystyle\int_{\mathbb R^N} a(x)\vert u\vert^{q-2}u_{\lambda}(u_{\lambda}+\varepsilon\varphi)dx- \displaystyle\int_{\mathbb R^N}\displaystyle\int_{\mathbb R^N}\displaystyle\frac{b(y)\vert u_{\lambda}\vert^pb(x)\vert u_{\lambda}\vert^{p-2}u_{\lambda}(u_{\lambda}+\varepsilon\varphi)}{\vert x\vert^{\alpha}\vert x-y\vert^{\mu}\vert y\vert^{\alpha}}dxdy\\
   \nonumber &-& \left<u_{\lambda},(u_{\lambda}+\varepsilon\varphi)^-\right>+\lambda\displaystyle\int_{[u_{\lambda}+\varepsilon\varphi\leq 0]}a(x)\vert u_{\lambda}\vert^{q-2}u_{\lambda}(u_{\lambda}+\varepsilon\varphi)dx\\
   \nonumber &+& \displaystyle\int_{[u_{\lambda}+\varepsilon\varphi\leq 0]}\displaystyle\int_{\mathbb R^N}\displaystyle\frac{b(y)\vert u_{\lambda}\vert^pb(x)\vert u_{\lambda}\vert^{p-2}u_{\lambda}(u_{\lambda}+\varepsilon\varphi)}{\vert x\vert^{\alpha}\vert x-y\vert^{\mu}\vert y\vert^{\alpha}}dxdy\\
   \nonumber&\leq &  \Vert u_{\lambda}\Vert^2-\lambda A(u_{\lambda})-B(u_{\lambda})+\varepsilon\left<u_{\lambda}, \varphi\right>-\lambda\varepsilon\displaystyle\int_{\mathbb R^N}a(x)\vert u_{\lambda}\vert^{q-2}u\varphi dx- \varepsilon\displaystyle\int_{[u+\varepsilon\varphi\leq 0]}\nabla u_{\lambda}\nabla\varphi+V(x)u_{\lambda}\varphi dx\\
   &-& \varepsilon\displaystyle\int_{\mathbb R^N}\displaystyle\int_{\mathbb R^N}\displaystyle\frac{b(y)\vert u_{\lambda}\vert^p b(x)\vert u_{\lambda}\vert^{p-2}u_{\lambda}\varphi}{\vert x\vert^{\alpha}\vert x-y\vert^{\mu}\vert y\vert^{\alpha}}dxdy.
 \end{eqnarray}
 Recall also that $u_{\lambda}\in \mathcal{N_{\lambda}^+}$. It follows from \eqref{fibra de J'} that
 $$\phi_{\lambda,u_\lambda}'(1)=\Vert u_{\lambda}\Vert^2-\lambda A(u_{\lambda})-B(u_{\lambda})=0.$$
Thus, by using \eqref{E4}, we infer that
  \begin{eqnarray*}\label{E6}
     0\nonumber& \leq & \left<u_{\lambda}, \varphi\right>-\lambda\displaystyle\int_{\mathbb R^N}a(x)\vert u_{\lambda}\vert^{q-2}u\varphi dx- \displaystyle\int_{[u+\varepsilon\varphi\leq 0]\!}\nabla u_{\lambda}\nabla\varphi+V(x)u_{\lambda}\varphi dx- \displaystyle\int_{\mathbb R^N}\!\displaystyle\int_{\mathbb R^N}\!\displaystyle\frac{b(y)\vert u_{\lambda}\vert^p b(x)\vert u_{\lambda}\vert^{p-2}u_{\lambda}\varphi}{\vert x\vert^{\alpha}\vert x-y\vert^{\mu}\vert y\vert^{\alpha}}dxdy.
 \end{eqnarray*}
Now, taking into account \eqref{TCD-grad} and \eqref{d22}, we see that
$$ \lim_{\varepsilon\to 0}\displaystyle\int_{[u+\varepsilon\varphi\leq 0]}\nabla u_{\lambda}\nabla\varphi+V(x)u_{\lambda}\varphi dx=0.$$
Therefore, as $\varepsilon\to 0,$ we have that
 \begin{eqnarray}\label{E6}
     0\nonumber& \leq & \left<u_{\lambda}, \varphi\right>-\lambda\displaystyle\int_{\mathbb R^N}a(x)\vert u_{\lambda}\vert^{q-2}u\varphi dx- \displaystyle\int_{\mathbb R^N}\displaystyle\int_{\mathbb R^N}\displaystyle\frac{b(y)\vert u_{\lambda}\vert^p b(x)\vert u_{\lambda}\vert^{p-2}u_{\lambda}\varphi}{\vert x\vert^{\alpha}\vert x-y\vert^{\mu}\vert y\vert^{\alpha}}dxdy,\,\, \varphi\in X.
 \end{eqnarray}
As a consequence, $u_{\lambda}\in\mathcal{N}^+_{\lambda}$ is a weak solution for the Problem \eqref{P1}.
Similarly, we show that $w_{\lambda}\in \mathcal{N}^-_{\lambda}$ is a weak solution for the Problem \eqref{P1}. This ends the proof.
\end{proof}

\section{Proof of Theorem \ref{teorema 1}}\label{sec-teo1}
\begin{proof}
It follows from Proposition \ref{solucao P1} that Problem \eqref{P1} admits at least two distinct positive solutions which are denoted by
$u_{\lambda},w_{\lambda}\in X$ whenever $\lambda\in(0,\lambda^*)$. Now, by using Lemma \ref{J(u)=C_ N+1} and Lemma \ref{J(u)=C_ N-1}, we know that $u_{\lambda}\in \mathcal{N}_{\lambda}^+$ and $w_{\lambda}\in \mathcal{N}_{\lambda}^-$. In virtue of   Lemma \ref{N^- longe do zero} there exists a constant $c>0$ such that $\Vert w_{\lambda}\Vert\geq c$. It follows from Lemma \ref{C_n+<01} and Lemma \ref{J(u)=C_ N+1} that $J(u_{\lambda})=C_{\mathcal{N}_{\lambda}^+}<0$. On the other hand, by using Lemma \ref{t cont. em relação a lambda} item $b),$ the maps $\lambda\mapsto J_{\lambda}(u_{\lambda})$ and $\lambda\mapsto J_{\lambda}(w_{\lambda})$ are decreasing for each $\lambda\in(0, \lambda^*)$. The proof of the item $e)$ is analogous to the proof given in \cite[Theorem 1.1]{RAY2021}. This ends the proof.
\end{proof}

\section{Multiplicity of solutions for $\lambda=\lambda^*$}
In this section, we shall prove the existence of at least two positive solutions to the Problem \eqref{P1} whenever $\lambda=\lambda^*$. First, we show that the Problem ($Q_{\lambda^*}$) does not admit any weak solution $u\in\mathcal{N}^0_{\lambda^*}$. More specifically, we prove the following result:
\begin{prop}\label{sem sol}
 Suppose ($H_0$), ($H'_1$), ($V_1$) and ($V_2$).
Then the problem Problem \eqref{P1} does not admit any weak solution $u\in \mathcal{N}^0_{\lambda}$ where $\lambda=\lambda^*$.
\end{prop}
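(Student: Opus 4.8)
The plan is to argue by contradiction. Suppose that $u \in \mathcal{N}_{\lambda^*}^0$ is a weak solution of Problem \eqref{P1} with $\lambda = \lambda^*$. Since $u \in \mathcal{N}_{\lambda^*}^0$, both equations $\phi_{\lambda^*,u}'(1) = 0$ and $\phi_{\lambda^*,u}''(1) = 0$ hold. From these two identities, exactly as in the computation \eqref{d12}--\eqref{d15a} carried out in Lemma \ref{3 passos}, I would solve the linear system in the unknowns $A(u)$ and $B(u)$ to extract the rigid values $A(u) = (2p-2)/[\lambda^*(2p-q)]$ and $B(u) = (2-q)/(2p-q)$, after normalizing $\|u\| = 1$. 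These give, in particular, the scalar relation $2\|u\|^2 = q\lambda^* A(u) + 2p B(u)$, which will be used to compare the weak-solution identity against the Nehari identity.

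Next I would exploit the fact that $u$ is assumed to be a \emph{weak solution}, so testing the equation against $u$ itself gives $\|u\|^2 = \lambda^* H(u,u) + D(u,u) = \lambda^* A(u) + B(u)$, which is just $\phi_{\lambda^*,u}'(1)=0$ again. The extra information comes from testing against $u$ in the equation \emph{with the coefficients of} Problem \eqref{P3}, or equivalently from recognizing that a genuine critical point must satisfy the full Euler--Lagrange relation, not merely the two Nehari constraints. The key tension I would set up is between the scaling exponents: the singular term scales like $t^q$, the Dirichlet-plus-potential term like $t^2$, and the Stein--Weiss term like $t^{2p}$. Because $q < 2 < 2p$, the system $\phi' = \phi'' = 0$ forces $u$ to sit exactly at the degenerate critical point $t_{\lambda^*}(u)$ of the fibering map, where by Lemma \ref{0 homogenea} and Lemma \ref{N0lambda*}(b) we have $\Lambda_n(u) = \lambda^*$ attaining the global infimum.

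The main obstacle, and the crux of the argument, is to show that a function satisfying \emph{both} the weak-solution identity \emph{and} the two degeneracy constraints cannot exist. Here I would use Lemma \ref{3 passos}: that result shows the minimizer $u = t_{\lambda^*}(v)v$ solves the \emph{rescaled} problem \eqref{P3}, whose coefficients are $q\lambda^*$ and $2p$ in front of the singular and nonlocal terms, rather than $\lambda^*$ and $1$. If $u$ were simultaneously a weak solution of \eqref{P1} (coefficients $\lambda^*$ and $1$) and of \eqref{P3} (coefficients $q\lambda^*$ and $2p$), then subtracting the two identities tested against $u$ yields
\begin{equation*}
(q-1)\lambda^* A(u) + (2p-1)B(u) = 0.
\end{equation*}
Since $A(u), B(u) > 0$ and $2p - 1 > 0$ while $q - 1 < 0$, this is a nontrivial linear relation that, combined with $\phi_{\lambda^*,u}''(1) = 0$ (which reads $\|u\|^2 = (q-1)\lambda^* A(u) + (2p-1)B(u)$), forces $\|u\|^2 = 0$, contradicting $u \neq 0$.

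I expect the delicate point to be the bookkeeping of coefficients: one must carefully verify which weighted problem the Nehari minimizer actually solves and confirm that the singular term, because $J_\lambda$ is only $C^0$, contributes the one-sided inequalities \eqref{E1}--\eqref{E2} rather than equalities, so the subtraction above must be justified via the sign information coming from Lemma \ref{L2} and the characterization in Remark \ref{relação entre R_e e J1}. Once the linear algebra closes with a strict sign contradiction, the nonexistence of any weak solution in $\mathcal{N}_{\lambda^*}^0$ follows, which is precisely the statement of the proposition.
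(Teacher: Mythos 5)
Your opening moves are sound --- invoking Lemma \ref{3 passos} to conclude that a weak solution $u\in\mathcal{N}^0_{\lambda^*}$ is a minimizer of $\Lambda_n$ and therefore solves the rescaled problem \eqref{P3} is exactly how the paper begins --- but your closing step contains an algebraic error, and once the algebra is repaired the contradiction evaporates. Testing \eqref{P3} against $u$ gives $2\Vert u\Vert^2=q\lambda^* A(u)+2pB(u)$, while testing \eqref{P1} against $u$ gives $\Vert u\Vert^2=\lambda^* A(u)+B(u)$; subtracting yields
\begin{equation*}
\Vert u\Vert^2=(q-1)\lambda^* A(u)+(2p-1)B(u),
\end{equation*}
and \emph{not} $(q-1)\lambda^* A(u)+(2p-1)B(u)=0$ as you wrote. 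But this identity is precisely $\phi''_{\lambda^*,u}(1)=0$, the defining condition of $\mathcal{N}^0_{\lambda^*}$ that you assumed at the outset, so the subtraction returns no new information. Indeed the three scalar relations in play --- $\phi'_{\lambda^*,u}(1)=0$, $\phi''_{\lambda^*,u}(1)=0$, and the \eqref{P3} identity tested on $u$ --- are linearly dependent, since $\phi'_{\lambda^*,u}(1)+\phi''_{\lambda^*,u}(1)=2\Vert u\Vert^2-q\lambda^* A(u)-2pB(u)$; no combination of identities tested against $u$ alone can ever produce a contradiction. This is also clear a priori: Lemma \ref{N0lambda*} item $b)$ guarantees $\mathcal{N}^0_{\lambda^*}\neq\emptyset$, so nonzero functions satisfying all of these scalar constraints certainly exist, and the nonexistence assertion can only concern the full weak-solution property, never the constraints themselves.

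The missing idea --- and the reason hypothesis $(H_1')$ appears in the statement, which your proposal never uses --- is to subtract the two Euler--Lagrange identities \emph{as distributions}, i.e.\ for every test function $\varphi\in X$ rather than only $\varphi=u$. This is what the paper does: combining $2\left<u,\varphi\right>=q\lambda^*H(u,\varphi)+2pD(u,\varphi)$ with $\left<u,\varphi\right>=\lambda^*H(u,\varphi)+D(u,\varphi)$ for all $\varphi\in X$ forces the pointwise identity
\begin{equation*}
a(x)=\frac{2(p-1)}{(2-q)\lambda^*}\left(\int_{\mathbb{R}^N}\frac{b(y)\vert u\vert^{p}}{\vert x\vert^{\alpha}\vert x-y\vert^{\mu}\vert y\vert^{\alpha}}\,dy\right)b(x)\vert u\vert^{p-q}\quad\text{a.e.\ in}\;\mathbb{R}^N.
\end{equation*}
Integrating over $\mathbb{R}^N$, the left-hand side is infinite because $a\notin L^1(\mathbb{R}^N)$ by $(H_1')$, whereas the right-hand side is finite by the Stein--Weiss inequality (Proposition \ref{Stein-Weiss} with $r=s=2N/(2N-2\alpha-\mu)$), H\"older's inequality, the embeddings, and the integrability $b\in L^{2N/(2N-2\alpha-\mu-(N-2)(p-q))}(\mathbb{R}^N)$ --- again $(H_1')$. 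That is the genuine contradiction, and your scalar bookkeeping cannot reach it even with the signs corrected. One remark in your favor: your worry about the one-sided inequalities \eqref{E1}--\eqref{E2} is moot here, since $u$ is \emph{assumed} to be a weak solution, so Definition \ref{sol fraca do probLema} gives equality for all $\varphi\in X$ and Lemma \ref{3 passos} gives equality for \eqref{P3}; the subtraction is legitimate --- it is only the conclusion drawn from it that must be pointwise rather than scalar.
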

\begin{proof}
The proof follows arguing by contradiction. Let us assume that there is a weak solution $u\in \mathcal{N}^0_{\lambda^*}$ for the Problem \eqref{P1}. It follows from Lemma \ref{3 passos} that
    \begin{equation*}
        0 = 2\left< u,\varphi \right>-q\lambda^*\displaystyle\int_{\mathbb R^N}a(x)\vert u\vert^{q-2}u\varphi dx-2p D(u,\varphi),\;\;\varphi\in X.
    \end{equation*}
    It is important to emphasize that
    $$    D(u,\varphi)=\displaystyle\int_{\mathbb R^N}\displaystyle\int_{\mathbb R^N}\displaystyle\frac{b(y)\vert u\vert^p b(x)\vert u\vert^{p-2}u\varphi}{\vert x\vert^{\alpha}\vert x-y\vert^{\mu}\vert y\vert^{\alpha}}dxdy,\;\;u,\varphi\in X.
    $$
    Recall also that $u\in \mathcal{N}^0_{\lambda^*}$ is a weak solution to the Problem \eqref{P1}. Hence, \eqref{eq.sol.fraca} is satisfied for each $\varphi \in X$. In particular, we see that
    \begin{eqnarray*}
\left\{\begin{array}{ll}
    2\left< u,\varphi \right>-q\lambda^*\displaystyle\int_{\mathbb R^N}a(x)\vert u\vert^{q-2}u\varphi dx-2p D(u,\varphi)=0,\\
   -2\left< u,\varphi \right>+2\lambda^*\displaystyle\int_{\mathbb R^N}a(x)\vert u\vert^{q-2}u\varphi dx+2D(u,\varphi)=0,\;\;\varphi\in X.
\end{array}
\right.
\end{eqnarray*}
As a consequence, we obtain that
\begin{eqnarray*}
\displaystyle\int_{\mathbb R^N}\left[(2-q)\lambda^*a(x)\vert u\vert^{q-2}u+2(1-p)\displaystyle\int_{\mathbb R^N}\displaystyle\frac{b(y)\vert u\vert^p}{\vert x\vert^{\alpha}\vert x-y\vert^{\mu}\vert y\vert^{\alpha}}dyb(x)\vert u\vert^{p-2}u\right]\varphi dx=0,~\varphi\in X.
\end{eqnarray*}
Under these conditions, we see that
\begin{eqnarray*}
    (2-q)\lambda^*a(x)\vert u\vert^{q-2}u+2(1-p)\displaystyle\int_{\mathbb R^N}\displaystyle\frac{b(y)\vert u\vert^p}{\vert x\vert^{\alpha}\vert x-y\vert^{\mu}\vert y\vert^{\alpha}}dyb(x)\vert u\vert^{p-2}u=0\;\hbox{a.e. in }\;\mathbb R^N.
\end{eqnarray*}
The last implies that
\begin{equation}\label{d23}
a(x)=\displaystyle\frac{2(p-1)}{(2-q)\lambda^*}\displaystyle\int_{\mathbb R^N}\displaystyle\frac{b(y)\vert u\vert^p}{\vert a\vert^{\alpha}\vert x-y\vert^{\mu}\vert y\vert^{\alpha}}dy\;b(x)\vert u\vert^{p-q},\;x\in \mathbb R^N.
\end{equation}
Now, by using \eqref{d23} and integrating both sides, taking into account that $a\notin L^1(\mathbb R^N)$ we deduce that
\begin{equation}\label{d24}
   \infty= \displaystyle\int_{\mathbb R^N}a(x)dx=\displaystyle\frac{2(p-1)}{(2-q)\lambda^*}\displaystyle\int_{\mathbb R^N}\displaystyle\int_{\mathbb R^N}\displaystyle\frac{b(y)\vert u\vert^pb(x)\vert u\vert^{p-q}}{\vert x\vert^{\alpha}\vert x-y\vert^{\mu}\vert y\vert^{\alpha}}dydx.
\end{equation}
Now, we analyze the right-hand side given in \eqref{d24}. Using the Stein-Weiss inequality given in Proposition \ref{Stein-Weiss} with $r=s= 2N/(2N-2\alpha-\mu)$, we have that
\begin{equation}\label{d25}
\displaystyle\int_{\mathbb R^N}\displaystyle\int_{\mathbb R^N}\displaystyle\frac{b(y)\vert u\vert^pb(x)\vert u\vert^{p-q}}{\vert x\vert^{\alpha}\vert x-y\vert^{\mu}\vert y\vert^{\alpha}}dydx
\leq c_1 \Vert b\vert u\vert^p\Vert_r\Vert b\vert u\vert^{p-q}\Vert_s.
\end{equation}
Now, by using \eqref{d25}, applying Hölder's inequality and Proposition \ref{imerso cont e comp}, we deduce that
\begin{eqnarray*}
 \Vert b\vert u\vert^p\Vert_r\Vert b\vert u\vert^{p-q}\Vert_r\nonumber& = & \displaystyle\int_{\mathbb R^N}\vert b(x)\vert^r\vert u\vert^{pr}dx\displaystyle\int_{\mathbb R^N}\vert b(x)\vert^r\vert u\vert^{r(p-q)}dx\\
\nonumber&\leq & \Vert b\Vert_{\infty}^r\Vert u\Vert_{pr}^{pr}
\left(\displaystyle\int_{\mathbb R^N}\vert b(x)\vert^{rt_1}\right)^{\frac{1}{t_1}}\left(\displaystyle\int_{\mathbb R^N}\vert u\vert^{2^*}\right)^{\frac{r(p-q)}{2^*}}\\
\nonumber & \leq & S_{pr}^{pr}\Vert b \Vert_{\infty}^r
\Vert u\Vert^{pr}\Vert b\Vert_{rt_1}^r\Vert u\Vert^{r(p-q)}_{2^*} \leq  S_{pr}^{pr}S_{2^*}^{r(p-q)}\Vert b\Vert^r_{\infty}\Vert u\Vert^{pr}\Vert b\Vert_{rt_1}^r\Vert u\Vert^{r(p-q)},
\end{eqnarray*}
where $t_1$ is the conjugate exponent of $\frac{2^*}{r(p-q)}$. More precisely, we obtain
$$t_1=\displaystyle\frac{2N-2\alpha-\mu}{2N-2\alpha-\mu-(N-2)(p-q)},\qquad rt_1=\displaystyle\frac{2N}{2N-2\alpha-\mu-(N-2)(p-q)}\qquad\mbox{and}\qquad 2\leq pr\leq 2^*.$$
Now, we consider $C= S_{pr}^{pr}S_{2^*}^{r(p-q)}\Vert b\Vert^r_{\infty}$ which is finite, see hypothesis ($H_1'$). In view of the last assertion we obtain that
\begin{eqnarray}\label{d40}
\displaystyle\int_{\mathbb R^N}\displaystyle\int_{\mathbb R^N}\displaystyle\frac{b(y)\vert u\vert^pb(x)\vert u\vert^{p-q}}{\vert x\vert^{\alpha}\vert x-y\vert^{\mu}\vert y\vert^{\alpha}}dydx
\leq \Vert b\vert u\vert^p\Vert_r\Vert b\vert u\vert^{p-q}\Vert_r
\leq  C\Vert b\Vert^r_{\infty}\Vert b\Vert_{rt_1}^r\Vert u\Vert^{r(2p-q)}<\infty.
\end{eqnarray}
However, we obtain a contradiction by using \eqref{d40} and \eqref{d24}. This ends the proof.
\end{proof}

 It follows from Theorem \ref{teorema 1} that there exists weak solutions for the Problem \eqref{P1} which are denoted by $u_{\lambda}\in \mathcal{N}_{\lambda}^+$ and $w_{\lambda}\in\mathcal{N}_{\lambda}^-$ for each $\lambda\in (0,\lambda^*)$. The next result is fundamental in order to prove that Problem \eqref{P1} admit at least two weak solutions $u_{\lambda^*}\in \mathcal{N}_{\lambda^*}^+$ and $w_{\lambda^*}\in\mathcal{N}_{\lambda^*}^-$. To do that we apply some ideas discussed in \cite[Theorem 1.2]{RAY2021}. In fact, we shall prove the following result:

\begin{prop}\label{C_n decrescente}
 Suppose ($H_0$), ($H'_1$), ($H'_2$), ($V_1$) and ($V_2$). Assume that $\Tilde{\lambda}\in (0,\lambda^*)$ and $(\lambda_k)\subset (0,\lambda^*)$ such that $\lambda_k\to\Tilde{\lambda}$. Then we obtain the following statements:
\begin{itemize}
    \item [$a)$] The functions $\lambda\mapsto u_{\lambda}$ and $\lambda\mapsto w_{\lambda}$ are continuous, i.e., for each sequence $\lambda_k\to \tilde{\lambda}$ we obtain that $u_{\lambda_k}\to u_{\tilde{\lambda}}$ and $w_{\lambda_k}\to w_{\tilde{\lambda}}$ in $X$ as $k\to +\infty$ where $J_{\tilde{\lambda}}(u_{\tilde{\lambda}})=C_{\mathcal{N}_{\tilde{\lambda}}^+}$ and $J_{\tilde{\lambda}}(w_{\tilde{\lambda}})=C_{\mathcal{N}_{\tilde{\lambda}}^-}.$
\item[$b)$] The functions $(0,\lambda^*]\ni \lambda\to C_{\mathcal{N}_{\lambda}^{\pm}}$ are decreasing and left-continuous for each $\lambda\in (0,\lambda^*)$;
\item [$c)$] There holds $\displaystyle\lim_{\lambda \uparrow \lambda^*} C_{\mathcal{N}_{\lambda}^{\pm}}=C_{\mathcal{N}_{\lambda^*}^{\pm}}$.
\end{itemize}
\begin{proof}
Consider two sequences $(u_{\lambda_k})$ and $(w_{\lambda_k})$ which are weak solutions for the Problem $(Q_{\lambda_k})$ where $k\in \mathbb N$ and $\lambda_k\in (0,\lambda^*)$. Notice also that $J_{\lambda_k}(u_{\lambda_k})=C_{\mathcal{N}_{\lambda_k}^+}$ and $J_{\lambda_k}(w_{\lambda_k})=C_{\mathcal{N}_{\lambda_k}^-}$ where $(u_{\lambda_k})$ and $(v_{\lambda_k})$ are minimizers on the Nehari sets $\mathcal{N}_{\lambda_k}^+$ and $\mathcal{N}_{\lambda_k}^-,$ respectively. Let us prove that $(u_{\lambda_k})$ and $(w_{\lambda_k})$ are bounded. The proof follows arguing by contradiction. Assume that $\Vert u_{\lambda_k}\Vert\to \infty$ as $k\to +\infty$. Using the identity $J_{\lambda_k}'(u_{\lambda_k})u_{\lambda_k}=0$ we know that
\begin{equation}\label{D5}
B(u_{\lambda_k})=\Vert u_{\lambda_k}\Vert^2+\lambda_kA(u_{\lambda_k}).
\end{equation}
Now, applying Hölder's inequality, Lemma \ref{imerso cont e comp} and \eqref{D5}, we obtain the following estimates:
\begin{eqnarray}\label{resultado}
J_{\lambda_k}(t_{\lambda_k}^+(u_{\lambda_k})u_{\lambda_k})=J_{\lambda_k}(u_{\lambda_k})
&=& \left(\displaystyle\frac{1}{2}-\displaystyle\frac{1}{2p}\right)\Vert u_{\lambda_k}\Vert^2-\lambda_k\left(\displaystyle\frac{1}{q}-\displaystyle\frac{1}{2p}\right)A(u_{\lambda_k})\nonumber\\
&\geq& \left(\displaystyle\frac{1}{2}-\displaystyle\frac{1}{2p}\right)\Vert u_{\lambda_k}\Vert^2-\lambda_k\left(\displaystyle\frac{1}{q}-\displaystyle\frac{1}{2p}\right)S_2^q\Vert a\Vert_r\Vert u_{\lambda_k}\Vert^q.
\end{eqnarray}
On the other hand, by using Lemma \ref{t cont. em relação a lambda}, we infer that
\begin{equation}\label{4.1}
\displaystyle\limsup_{k\to\infty} C_{\mathcal{N}_{\lambda_k}^+}\leq \displaystyle\limsup_{k\to \infty} J_{\lambda_k}(t_{\lambda_k}^+(u_{\tilde{\lambda}})u_{\tilde{\lambda}})=J_{\tilde{\lambda}}(t_{\tilde{\lambda}}^+(u_{\tilde{\lambda}})u_{\tilde{\lambda}})=J_{\tilde{\lambda}}(u_{\tilde{\lambda}})=C_{\mathcal{N}_{\tilde{\lambda}}^+}.
\end{equation}
Thus, using \eqref{4.1} and \eqref{resultado}, we see that
$$
C_{\mathcal{N_{\tilde{\lambda}}^+}}\geq\displaystyle\limsup_{k\to\infty} J_{\lambda_k}(u_{\lambda_k})\geq \displaystyle\limsup_{k\to \infty}\left[\left(\displaystyle\frac{1}{2}-\displaystyle\frac{1}{2p}\right)\Vert u_{\lambda_k}\Vert^2-\lambda_k\left(\displaystyle\frac{1}{q}-\displaystyle\frac{1}{2p}\right)C\Vert u_{\lambda_k}\Vert^q\right].
$$
Since $0<q<1$ and $C>0$ the last assertion implies that $(u_{\lambda_k})$ is bounded. Analogously, we prove that $(w_{\lambda_k})$ is also bounded. This ends the proof of the claim.

Now, up to a subsequence, there exists $u_{\tilde{\lambda}}\in X$ such that $u_{\lambda_k}\rightharpoonup u_{\tilde{\lambda}}$ in $X$.
Define the function $J:(0, +\infty) \times X \to \mathbb{R}$ given by $J(\lambda, u) := J_{\lambda} (u)$. Now, by using Proposition \ref{J continuo1}, we show that $u \mapsto J_{\lambda}(u)$ is continuous. It is not hard to see that $\lambda \mapsto J_\lambda(u)$ is also continuous. Let $\lambda_k\to\tilde{\lambda} \in (0,\lambda^*]$ be a fixed sequence. Recall that $(u_{\lambda_k})$ is a sequence of solutions for the Problem $(Q_{\lambda_k})$, i.e.,
 \begin{equation}\label{1.1}
     \left\langle u_{\lambda_k},\varphi\right\rangle =D(u_{\lambda_k},\varphi) +\lambda_k H(u_{\lambda_k},\varphi),\;\;\varphi\in X.
 \end{equation}
Under these conditions, we shall prove that $\Vert u_{\lambda_k}-u_{\tilde{\lambda}}\Vert\to 0$. It is important to mention that $u_{\lambda_k}\rightharpoonup u_{\lambda^*}$ in $X$. Hence,  we obtain that
\begin{eqnarray*}\label{E11}
   \displaystyle\limsup_{k\to \infty}\Vert u_{\lambda_k}-u_{\tilde{\lambda}}\Vert^2 &\leq & \displaystyle\limsup_{k\to\infty} \left<u_{\lambda_k},u_{\lambda_k}-u_{\tilde{\lambda}}\right>+\displaystyle\limsup_{k\to\infty} \left<-u_{\tilde{\lambda}},u_{\lambda_k}-u_{\tilde{\lambda}}\right>. \nonumber\\
\end{eqnarray*}
Now, by applying \eqref{1.1} and using the test function $u_{\lambda_k}-u_{\tilde{\lambda}}$, we infer that
\begin{equation*}
  \left<u_{\lambda_k},u_{\lambda_k}-u_{\tilde{\lambda}}\right>=D(u_{\lambda_k},u_{\lambda_k}-u_{\tilde{\lambda}}) +\lambda_k H(u_{\lambda_k},u_{\lambda_k}-u_{\tilde{\lambda}}).
\end{equation*}
As a consequence, we mention that
\begin{eqnarray}\label{F2}
  \displaystyle\limsup_{k\to \infty}\left<u_{\lambda_k},u_{\lambda_k}-u_{\tilde{\lambda}}\right>\nonumber &\leq & \displaystyle\limsup_{k\to \infty} D(u_{\lambda_k},u_{\lambda_k}-u_{\tilde{\lambda}})+ \displaystyle\limsup_{k\to \infty} \lambda_k H(u_{\lambda_k},u_{\lambda_k}-u_{\tilde{\lambda}}).
\end{eqnarray}
On the other hand, using the Stein-Weiss inequality with $s=r=2N/(2N-2\alpha-\mu)$ and taking into account the Proposition \ref{DHLSP}, we deduce that
\begin{eqnarray}\label{F1}
  D(u_{\lambda_k},u_{\lambda_k}-u_{\tilde{\lambda}}) \nonumber &=&\displaystyle\int_{\mathbb R^n}\displaystyle\int_{\mathbb R^n}\displaystyle\frac{b(y) u_{\lambda_k}^p b(x) u_{\lambda_k}^{p-1}(u_{\lambda_k}-u_{\tilde{\lambda}})}{\vert x\vert^{\alpha}\vert x-y\vert^{\mu}\vert y\vert^{\alpha}}dxdy\\
  \nonumber & \leq & C\Vert b\Vert_{\infty}^r\Vert u_{\lambda_k}\Vert_{pr}^p\Vert b u_{\lambda_k}^{p-1}(u_{\lambda_k}-u_{\tilde{\lambda}})\Vert_{r} \leq  C\Vert b\Vert_{\infty}^rS_{pr}^p\Vert u_{\lambda_k}\Vert^p\Vert b u_{\lambda_k}^{p-1}(u_{\lambda_k}-u_{\lambda^*})\Vert_{r}.
\end{eqnarray}
It is not hard to verify that $2< pr< 2^*$ and $\frac{\alpha}{N}<1-\frac{1}{r}$ hold. Using Hölder's inequality and Proposition \ref{imerso cont e comp} we infer that
\begin{eqnarray}
 \Vert b u_{\lambda_k}^{p-1}(u_{\lambda_k}-u_{\tilde{\lambda}})\Vert_r &\leq &
 \Vert b\Vert_{rt_1}^r\Vert u_{\lambda_k}\Vert_{2^*}^{r(p-1)}\Vert u_{\lambda_k}-u_{\tilde{\lambda}} \Vert_{rt_2}^r
 \leq  \Vert b\Vert_{rt_1}^rS_{2^*}^{r(p-1)}\Vert u_{\lambda_k}\Vert^{r(p-1)}\Vert u_{\lambda_k}-u_{\tilde{\lambda}} \Vert_{rt_2}^r\to 0,
 \end{eqnarray}
where $t_2=2^*/r$ and $t_1$ satisfies $$\displaystyle\frac{1}{t_1}+\displaystyle\frac{r(p-1)}{2^*}+\displaystyle\frac{1}{t_2}=1.$$
Consequently, we obtain the following identities
$$t_1=\displaystyle\frac{2^*}{2^*-rp}\qquad\mbox{and}\qquad rt_1=\displaystyle\frac{2N}{2N-2\alpha-\mu-(N-2)p}.$$
According to ($H_2'$) we obtain that $b\in L^{rt_1}(\mathbb R^N).$ It follows from \eqref{F1} that
$$\displaystyle\limsup_{k\to \infty} D(u_{\lambda_k},u_{\lambda_k}-u_{\tilde{\lambda}})=0.$$

From now on, we observe that there exists a measurable function $h_2$ such that $\left\vert a\vert u_{\lambda_k}\right\vert^q\vert\leq a h_2^q \in L^1(\mathbb R^N)$. Hence, using the Dominated Convergence Theorem, Lemma \ref{Jsemi contínua} and Fatou's Lemma, we prove that
\begin{eqnarray*}
    \displaystyle\limsup_{k\to \infty} \lambda_k H(u_{\lambda_k},u_{\lambda_k}-u_{\tilde{\lambda}}) \nonumber &=&\displaystyle\limsup_{k\to \infty}\lambda_k\displaystyle\int_{\mathbb R^N}a(x) u_{\lambda_k}\vert^{q-1}(u_{\lambda_k}-u_{\tilde{\lambda}})\;dx\nonumber\\
    \nonumber &\leq & \displaystyle\limsup_{k\to \infty} \lambda_k\displaystyle\int_{\mathbb R^N}a(x) u_{\lambda_k}^q\;dx
    - \displaystyle\liminf_{k\to \infty} \lambda_k\displaystyle\int_{\mathbb R^N}a(x) u_{\lambda_k}^{q-1}u_{\tilde{\lambda}}\;dx\\
\end{eqnarray*}
As a consequence, by using the last assertion and \eqref{F2}, we infer that
$$\displaystyle\limsup_{k\to \infty}\left<u_{\lambda_k},u_{\lambda_k}-u_{\tilde{\lambda}}\right>\leq 0.$$
Under these conditions, we deduce that
$$\displaystyle\liminf_{k\to \infty}\Vert u_{\lambda_k}-u_{\tilde{\lambda}}\Vert^2\leq\limsup_{k\to \infty}\Vert u_{\lambda_k}-u_{\tilde{\lambda}}\Vert^2=0.$$
Thus, we obtain that $u_{\lambda_k}\to u_{\tilde{\lambda}}$ in $X$. The same argument can be applied in order to guarantee that $(w_{\lambda_k})$ strongly converges to some $w \in X$.

Now, we shall show that $u_{\tilde{\lambda}}\in \mathcal{N}_{\tilde{\lambda}}^+$ and $w_{\tilde{\lambda}}\in \mathcal{N}_{\tilde{\lambda}}^-.$  In virtue of \eqref{4.1} and the continuity of $J$ we observe that
$$\displaystyle\limsup_{k\to\infty} C_{\mathcal{N}_{\lambda_k}^+}= J_{\tilde{\lambda}}(u_{\tilde{\lambda}})= C_{\mathcal{N}_{\tilde{\lambda}}^+}<0.$$
Therefore, we obtain that $u_{\tilde{\lambda}}\neq 0$. Furthermore, we mention that $u_{\lambda_k}\in \mathcal{N}_{\lambda_k}^+$.
Using the last assertion we infer that
$$J'(u_{\tilde{\lambda}})u_{\tilde{\lambda}}=0\;\;\hbox{and}\;\;J''(u_{\tilde{\lambda}})(u_{\tilde{\lambda}},u_{\tilde{\lambda}})\geq 0.$$
It follows from the last assertion Lemma \ref{N0lambda*} that $u_{\tilde{\lambda}}\in\mathcal{N}_{\tilde{\lambda}}^+$ holds for each $\tilde{\lambda}\in(0,\lambda^*)$.
Furthermore, for $\tilde{\lambda}=\lambda^*$, we observe that $\mathcal{N}_{\tilde{\lambda}}^0\neq \emptyset$. In light of Proposition \ref{sem sol} we have $u_{\tilde{\lambda}}\notin \mathcal{N}_{\tilde{\lambda}}^0$. Hence, we infer that $u_{\tilde{\lambda}}\in \mathcal{N}_{\tilde{\lambda}}^+.$

Now, we shall consider the sequence $(w_{\lambda_k})$. Due to Proposition \ref{Stein-Weiss} and Lemma \ref{imerso cont e comp}, we obtain
\begin{equation}\label{B(w_k)}
B(w_{\lambda_k})\leq C\Vert b\Vert_{\infty}\Vert w_{\lambda_k}\Vert^{2p}.
\end{equation}
In light of \eqref{t-lambda1} and \eqref{B(w_k)} we deduce that
$$C_1\Vert w_{\lambda_k}\Vert^{\frac{2-2p}{2p-2}}\leq t_\lambda(w_{\lambda_k})\leq t_{\lambda_k}^{n,-}(w_{\lambda_k})=1,\; u\in X\backslash\{0\}.$$
Now, taking the limit in the last estimate, we conclude that
$0<C_1\leq \Vert w_{\lambda}\Vert.$
Hence, we obtain that $w_{\lambda}\neq 0$. On the other hand, we know that $\{w_{\lambda_k}\}\in \mathcal{N}_{\lambda_k}^-$. Notice also that
$$J'(w_{\lambda_k})w_{\lambda_k}=0\;\;\hbox{and}\;\; J''(w_{\lambda_k})(w_{\lambda_k},w_{\lambda_k})<0.$$
As a consequence, we infer that
$$J'(w_{\tilde{\lambda}})w_{\tilde{\lambda}}=0\;\;\hbox{and}\;\; J''(w_{\tilde{\lambda}})(w_{\tilde{\lambda}},w_{\tilde{\lambda}})\leq 0.$$
Similarly, we conclude that $w_{\tilde{\lambda}}\in \mathcal{N}_{\tilde{\lambda}}^+$.

It remains to ensure that $\lambda \mapsto C_{\mathcal{N}_{\lambda}^-}$  is decreasing. Firstly, we observe that, for each
$\lambda_1 < \lambda_2$, we obtain the following estimates:
\begin{eqnarray}
    J_{\lambda_2}(tu) < J_{\lambda_1}(tu)\qquad\mbox{and}\qquad J_{\lambda_2}'(tu)(tu) < J_{\lambda_1}'(tu)(tu), \, \, u\in X\setminus\{0\},~ t > 0.\nonumber
\end{eqnarray}
Moreover, we know that
$C_{\mathcal{N}_{\lambda_1}^-} =  J_{\lambda_1}(t_{\lambda_1}^-(u_{\lambda_1})u_{\lambda_1})\;\;\;\;\mbox{and}\;\;\;\;C_{\mathcal{N}_{\lambda_2}^-} =  J_{\lambda_2}(t_{\lambda_2}^-(u_{\lambda_2})u_{\lambda_2}).$
Recall also that
\begin{eqnarray*}
\lambda_1=R_n(t_{\lambda_1}^\pm(u_{\lambda_1}) u_{\lambda_1})<R_n(t_{\lambda_2}^\pm(u_{\lambda_1}) u_{\lambda_1})=\lambda_2,
\end{eqnarray*}
Under these conditions, we are able to prove that
\begin{equation}\label{reltlamb}
    t_{\lambda_1}^+(u_{\lambda_1}) < t_{\lambda_2}^+(u_{\lambda_1}) < t_{\lambda_2}^-(u_{\lambda_1}) < t_{\lambda_1}^-(u_{\lambda_1}).
\end{equation}
Thus, we infer that
$
    C_{\mathcal{N}_{\lambda_2}^-} = J_{\lambda_2}(t_{\lambda_2}^-(u_{\lambda_2})u_{\lambda_2}) \le  J_{\lambda_2}(t_{\lambda_2}^-(u_{\lambda_1})u_{\lambda_1}) <  J_{\lambda_1}(t_{\lambda_2}^-(u_{\lambda_1})u_{\lambda_1})
        <  J_{\lambda_1}(t_{\lambda_1}^-(u_{\lambda_1})u_{\lambda_1}) = C_{\mathcal{N}_{\lambda_1}^-}.
$
Here, was used in the third inequality just above the fact that the fibering maps $t \mapsto J_{\lambda_1}(tu_{\lambda_1})$ is increasing on the interval $[t_{\lambda_1}^+(u_{\lambda_1}), t_{\lambda_1}^-(u_{\lambda_1})]$ and the relation given in \eqref{reltlamb}. Similarly, we prove that $\lambda \mapsto C_{\mathcal{N}_{\lambda}^+}$ is decreasing. Indeed, we mention that $t \mapsto J_{\lambda_2}(tu_{\lambda_1})$ is decreasing for $t \in (0, t_{\lambda_2}^+(u_{\lambda_1}))$. Now, by using \eqref{reltlamb}, we obtain that
\begin{eqnarray*}
    C_{\mathcal{N}_{\lambda_1}^+} & = & J_{\lambda_1}(t_{\lambda_1}^+(u_{\lambda_1})u_{\lambda_1}) > J_{\lambda_2}(t_{\lambda_1}^+(u_{\lambda_1})u_{\lambda_1}) > J_{\lambda_2}(t_{\lambda_2}^+(u_{\lambda_1})u_{\lambda_1}) \geq  J_{\lambda_2}(t_{\lambda_2}^+(u_{\lambda_2})(u_{\lambda_2}) = C_{\mathcal{N}_{\lambda_2}^+}.
\end{eqnarray*}
Thus, the functions $\lambda\mapsto C_{\mathcal{N}_{\lambda}^{\pm}}$ are decreasing.
At this stage, we shall prove that
$$\displaystyle\lim_{\lambda_k\to\tilde{\lambda}^-}C_{\mathcal{N}_{\lambda_k}^{\pm}}=C_{\mathcal{N}_{\tilde{\lambda}}^{\pm}}.$$
Without loss of generality, we assume  $C_{\mathcal{N}_{\tilde{\lambda}}^+}=J_{\tilde\lambda}(u_{\tilde{\lambda}})$ and $C_{\mathcal{N}_{\tilde{\lambda}_k}^+}=J_{\tilde\lambda}(u_{\tilde{\lambda}_k})$.  Notice also that $\lambda\mapsto C_{\mathcal{N}_{\tilde{\lambda}}^+}$ is decreasing and $\lambda_k\leq \tilde\lambda$. Hence, we deduce that
$$C_{\mathcal{N}_{\tilde{\lambda}}^+}\leq \liminf_{\lambda_k\to \tilde\lambda} C_{\mathcal{N}_{{\lambda_k}}^+}.$$
On the other hand, by using \eqref{4.1}, we see that
$$\displaystyle\limsup_{\lambda_k\to \tilde{\lambda}} C_{\mathcal{N}_{\lambda_k}^+}\leq C_{\mathcal{N}_{\tilde{\lambda}}^+}.$$
Thus, we obtain
$\lim_{\lambda_k\to \tilde{\lambda}} C_{\mathcal{N}_{\lambda_k}^+}=C_{\mathcal{N}_{\tilde{\lambda}}^+}.$
In particular, we infer that
$\lim_{\lambda_k \to \lambda^*}C_{\mathcal{N}_{\lambda_k}^+} = C_{\mathcal{N}_{\lambda^*}^+}.$
By a similar argument, we prove that $\lim\limits_{\lambda_k \to \lambda^*}C_{\mathcal{N}_{\lambda_k}^-} = C_{\mathcal{N}_{\lambda^*}^-}$. This ends the proof.
\end{proof}
\end{prop}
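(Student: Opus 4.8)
The plan is to fix $\tilde\lambda\in(0,\lambda^*]$, choose a sequence $\lambda_k\to\tilde\lambda$ with $\lambda_k\in(0,\lambda^*)$, and track the minimizers $u_{\lambda_k}\in\mathcal N_{\lambda_k}^+$ and $w_{\lambda_k}\in\mathcal N_{\lambda_k}^-$ produced by Theorem \ref{teorema 1} (equivalently Lemmas \ref{J(u)=C_ N+1} and \ref{J(u)=C_ N-1}); their strong limits will serve as the minimizers at $\tilde\lambda$. First I would establish boundedness in $X$. Using $J_{\lambda_k}'(u_{\lambda_k})u_{\lambda_k}=0$ to write $B(u_{\lambda_k})=\|u_{\lambda_k}\|^2+\lambda_k A(u_{\lambda_k})$, the energy reduces to $J_{\lambda_k}(u_{\lambda_k})=(\tfrac12-\tfrac1{2p})\|u_{\lambda_k}\|^2-\lambda_k(\tfrac1q-\tfrac1{2p})A(u_{\lambda_k})$, and Hölder together with Lemma \ref{imerso cont e comp} gives $A(u_{\lambda_k})\le C\|u_{\lambda_k}\|^q$ with $q<1$. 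Since \eqref{4.1} bounds $\limsup_k C_{\mathcal N_{\lambda_k}^+}$ from above, an unbounded subsequence would force $J_{\lambda_k}(u_{\lambda_k})\to+\infty$, a contradiction; the same energy comparison handles $(w_{\lambda_k})$.

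Passing to a subsequence, $u_{\lambda_k}\rightharpoonup u_{\tilde\lambda}$ in $X$, and the crux is upgrading this to strong convergence. Testing the weak equation \eqref{1.1} with $u_{\lambda_k}-u_{\tilde\lambda}$ and discarding the term $\langle u_{\tilde\lambda},u_{\lambda_k}-u_{\tilde\lambda}\rangle$, which vanishes by weak convergence, reduces matters to showing that the nonlocal pairing $D(u_{\lambda_k},u_{\lambda_k}-u_{\tilde\lambda})$ and the singular pairing $H(u_{\lambda_k},u_{\lambda_k}-u_{\tilde\lambda})$ both tend to zero. For $D$ I would apply the Stein-Weiss and weighted Hardy-Littlewood-Sobolev inequalities (Propositions \ref{Stein-Weiss} and \ref{DHLSP}) with $r=s=2N/(2N-2\alpha-\mu)$, so that everything is controlled by $\|b\,u_{\lambda_k}^{p-1}(u_{\lambda_k}-u_{\tilde\lambda})\|_r$; Hölder, the compact embedding of Lemma \ref{imerso cont e comp}, and the integrability $b\in L^{rt_1}$ granted by $(H_2')$ then force this to $0$. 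For the singular term, domination of $a|u_{\lambda_k}|^q$ by a fixed $L^1$ function lets Fatou's lemma and the dominated convergence theorem kill the contribution. This yields $u_{\lambda_k}\to u_{\tilde\lambda}$ strongly, and the identical scheme gives $w_{\lambda_k}\to w_{\tilde\lambda}$.

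It then remains to place the limits in the correct fibers and read off parts (b) and (c). From \eqref{4.1} and continuity of $J$ one gets $J_{\tilde\lambda}(u_{\tilde\lambda})=C_{\mathcal N_{\tilde\lambda}^+}<0$, so $u_{\tilde\lambda}\neq0$; passing to the limit in $u_{\lambda_k}\in\mathcal N_{\lambda_k}^+$ gives $J'(u_{\tilde\lambda})u_{\tilde\lambda}=0$ and $J''(u_{\tilde\lambda})(u_{\tilde\lambda},u_{\tilde\lambda})\ge0$. For $\tilde\lambda\in(0,\lambda^*)$ the equality $\mathcal N_{\tilde\lambda}^0=\emptyset$ from Lemma \ref{N0lambda*} promotes the weak inequality to strict, so $u_{\tilde\lambda}\in\mathcal N_{\tilde\lambda}^+$; the uniform lower bound $\|w_{\lambda_k}\|\ge C_1$ from Lemma \ref{N^- longe do zero} keeps $w_{\tilde\lambda}\neq0$ and lands it in $\mathcal N_{\tilde\lambda}^-$. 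For monotonicity, the strict inequalities $J_{\lambda_2}(tu)<J_{\lambda_1}(tu)$ for $\lambda_1<\lambda_2$ combined with the nesting \eqref{reltlamb} of the two projection radii give the comparison chains $C_{\mathcal N_{\lambda_2}^\pm}<C_{\mathcal N_{\lambda_1}^\pm}$; left-continuity follows by sandwiching $\liminf_k C_{\mathcal N_{\lambda_k}^+}\ge C_{\mathcal N_{\tilde\lambda}^+}$ (from monotonicity) against $\limsup_k C_{\mathcal N_{\lambda_k}^+}\le C_{\mathcal N_{\tilde\lambda}^+}$ (from \eqref{4.1}), and specializing to $\tilde\lambda=\lambda^*$ yields (c).

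I expect the genuine difficulty to be the endpoint case $\tilde\lambda=\lambda^*$: there Lemma \ref{N0lambda*} gives $\mathcal N_{\lambda^*}^0\neq\emptyset$, so the limiting identities $J'(u_{\lambda^*})u_{\lambda^*}=0$ and $J''(u_{\lambda^*})(u_{\lambda^*},u_{\lambda^*})\ge0$ leave open the possibility that $u_{\lambda^*}$ is a \emph{degenerate} minimizer sitting on $\mathcal N_{\lambda^*}^0$, which would break the Lagrange multiplier step. Ruling this out is precisely what Proposition \ref{sem sol} provides: since Problem \eqref{P1} admits no weak solution in $\mathcal N_{\lambda^*}^0$, the strong limits $u_{\lambda^*}$ and $w_{\lambda^*}$ cannot be degenerate and must belong to $\mathcal N_{\lambda^*}^+$ and $\mathcal N_{\lambda^*}^-$. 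A secondary technical point is keeping the nonlocal estimates uniform in $k$, for which the two distinct integrability exponents in $(H_1')$ and $(H_2')$ are essential.
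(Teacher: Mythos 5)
Your proposal is correct and follows essentially the same route as the paper: boundedness via the Nehari identity and the $\Vert u\Vert^q$ estimate on $A$, strong convergence by testing the weak equation with $u_{\lambda_k}-u_{\tilde\lambda}$ and controlling $D$ through Stein--Weiss together with $(H_1')$, $(H_2')$ and $H$ through domination and Fatou, placement of the limits via Lemma \ref{N0lambda*} for $\tilde\lambda<\lambda^*$ and Proposition \ref{sem sol} at the endpoint, and monotonicity plus the sandwich argument for left-continuity and part $c)$. You also correctly isolate the genuine difficulty, namely excluding a degenerate limit in $\mathcal{N}_{\lambda^*}^0$, which is exactly the role Proposition \ref{sem sol} plays in the paper's argument.
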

\begin{theorem}\label{salva}
 Suppose ($H_0$), ($H_1'$), ($H_2'$), ($V_1$) and ($V_2$). Then the Problem \eqref{P1} admits at least two solutions denoted by $w_{\lambda^*}\in \mathcal{N}^-_{\lambda^*}$ and $u_{\lambda^*}\in\mathcal{N}^+_{\lambda^*}.$
\begin{proof}
Let us consider $(\lambda_k)\subset (0,\lambda^*)$ be a sequence such that $\lambda_k\to\lambda^*$. Assume also that $(w_{\lambda_k})\subset \mathcal{N}^-_{\lambda_k}$ is a sequence of positive weak solutions for the Problem \eqref{P1} with $\lambda = \lambda_k$. Recall that $(w_{\lambda_k})$ is a bounded sequence in $X$. Hence, $w_{\lambda_k}\rightharpoonup w_{\lambda^*}$ in $X$. Thus, there exists $h_m\in L^m(\mathbb R^N)$ such that $0\leq w_{\lambda_k}  \leq h_m$, $w_{\lambda_k} \rightarrow w_{\lambda^*}~\hbox{in}~ L^m(\mathbb R^N)$ and $w_{\lambda_k}(x) \rightarrow w_{\lambda^*}(x) \ \hbox{a.e in}\;\mathbb R^N$, $2\leq m< 2^*$. As a consequence,  we have that $w_{\lambda^*}\geq 0$.

In virtue of Proposition \ref{C_n decrescente} item $a),$ we obtain that $w_{\lambda_k}\to w_{\lambda^*}$ in $X$. This leads us to prove that
 \begin{equation}\label{E14}
     \phi'_{\lambda^*,w_{\lambda^*}}(1)=\displaystyle\lim_{k\to \infty}\phi'_{\lambda_k,w_{\lambda_k}}(1)=0\qquad\mbox{and}\qquad \phi''_{\lambda^*,w_{\lambda^*}}(1)=\displaystyle\lim_{k\to \infty}\phi''_{\lambda_k,w_{\lambda_k}}(1)\leq 0.
 \end{equation}
Therefore, by using \eqref{E14}, we infer that $w_{\lambda^*}\in \mathcal{N}_{\lambda^*}^-\cup \mathcal{N}_{\lambda^*}^0$.

Now, we shall prove that $w_{\lambda^*}$ is a weak solution for the Problem \eqref{P1}. Recall that $w_{\lambda_k}$ is a weak solution for the Problem ($Q_{\lambda_k}$). Thus, we obtain that
 \begin{equation*}\label{sol. fraca lambida*1}
     \left<w_{\lambda_k},\varphi\right>-D(w_{\lambda_k},\varphi)=\lambda_kH(w_{\lambda_k},\varphi),\;\varphi\in X.
 \end{equation*}
The last assertion implies that
 \begin{equation*}
 \displaystyle\liminf_{k\to\infty}\left[\left<w_{\lambda_k},\varphi\right>-D(w_{\lambda_k},\varphi)\right]=\displaystyle\liminf_{k\to\infty}\lambda_kH(w_{\lambda_k},\varphi).
 \end{equation*}
Hence, applying Fatou's Lemma, we deduce that
\begin{equation}\label{w-posi}
 \left<w_{\lambda^*},\varphi\right>-D(w_{\lambda^*},\varphi)\geq \lambda^*\displaystyle\int_{\mathbb R^N}a(x)G(x)\varphi dx,
\end{equation}
holds for every $\varphi\in X_+$ where
 \begin{eqnarray*}
 G(x):=\left\{\begin{array}{ll}
      w_{\lambda^*}^{q-1},\;\;\hbox{se}\;\;w_{\lambda^*}(x)\neq 0,\\
    \infty,\;\;\hbox{se}\;\;w_{\lambda^*}(x)=0.
      \end{array}
\right.
\end{eqnarray*}
It follows from \eqref{w-posi} that
$$0<\displaystyle\int_{\mathbb R^N}a(x)G(x)\varphi dx<\infty.$$
Under these conditions, we deduce that $w_{\lambda^*}(x)> 0$ a.e. in $\mathbb R^N$ and $aw_{\lambda^*}^{q-1}\varphi\in L^1(\mathbb R^N)$ holds for all $\varphi\in X_+$.
As a product, we have that
\begin{equation}\label{>0}
\displaystyle\int\limits_{\mathbb R^N}(\nabla w_{\lambda^*}\nabla \varphi+V(x)w_{\lambda^*}\varphi) dx- \lambda H(w_{\lambda^*},\varphi)- D(w_{\lambda^*},\varphi)\geq 0,\;\;\varphi\in X_+.
\end{equation}
Now, we shall prove that
$$\displaystyle\int\limits_{\mathbb R^N}(\nabla w_{\lambda^*}\nabla \varphi+V(x)w_{\lambda^*}\varphi) dx- \lambda H(w_{\lambda^*},\varphi)- D(w_{\lambda^*},\varphi)= 0,~\varphi\in X.
$$
In order to do that, given any $\varphi \in X$ and $\varepsilon>0$ small enough, we define $\Psi=(w_{\lambda^*}+\varepsilon\varphi)^+\in X_+$. Recall that $\Psi=(w_{\lambda^*}+\varepsilon\varphi)^+=(w_{\lambda^*}+\varepsilon\varphi)-(w_{\lambda^*}+\varepsilon\varphi)^-.$
Under these conditions, by using $\Psi$ as a test function in \eqref{>0}, we deduce that
\begin{eqnarray*}
     0\nonumber &\leq & \left< w_{\lambda^*},w_{\lambda^*}+\varepsilon\varphi\right>-\lambda^*\displaystyle\int_{\mathbb R^N}a(x) w_{\lambda^*}^{q-1}(w_{\lambda^*}+\varepsilon\varphi)dx\\
    \nonumber&&-\displaystyle\int_{\mathbb R^N}\displaystyle\int_{\mathbb R^N}\displaystyle\frac{b(y) w_{\lambda^*}^pb(x) w_{\lambda^*}^{p-1}(w_{\lambda^*}+\varepsilon\varphi)}{\vert x\vert^{\alpha}\vert x-y\vert^{\mu}\vert y\vert^{\alpha}}dxdy-\left<w_{\lambda^*},(w_{\lambda^*}+\varepsilon\varphi)^-\right>\\
\nonumber&&+\lambda^*\displaystyle\int_{(w_{\lambda^*}+\varepsilon\varphi\leq 0)}a(x) w_{\lambda^*}^{q-1}(w+\varepsilon\varphi)dx+\displaystyle\int_{(w_{\lambda^*}+\varepsilon\varphi\leq 0)}\displaystyle\int_{\mathbb R^N}\displaystyle\frac{b(y) w_{\lambda^*}^pb(x) w_{\lambda^*}^{p-1}(w_{\lambda^*}+\varepsilon\varphi)}{\vert x\vert^{\alpha}\vert x-y\vert^{\mu}\vert y\vert^{\alpha}}dxdy\\
\nonumber &\leq & \Vert w_{\lambda^*}\Vert^2-\lambda^*\displaystyle\int_{\mathbb R^N}a(x) w_{\lambda^*}^q dx-\displaystyle\int_{\mathbb R^N}\displaystyle\int_{\mathbb R^N}\displaystyle\frac{b(y) w_{\lambda^*}^pb(x) w_{\lambda^*}^p}{\vert x\vert^{\alpha}\vert x-y\vert^{\mu}\vert y\vert^{\alpha}}dxdy -\varepsilon\int_{\left(w_{\lambda^*}+\varepsilon\varphi\right)\leq 0}\nabla w_{\lambda^*}\nabla\varphi+V(x)w_{\lambda^*}\varphi dx\\
\nonumber&&+\varepsilon\left[\left<w_{\lambda^*},\varphi\right>-\lambda^*\displaystyle\int_{\mathbb R^N}a(x) w_{\lambda^*}^{q-1}\varphi dx-\left.\displaystyle\int_{\mathbb R^N}\displaystyle\int_{\mathbb R^N}\displaystyle\frac{b(y) w_{\lambda^*}^p b(x) w_{\lambda^*}^{p-1}\varphi}{\vert x\vert^{\alpha}\vert x-y\vert^{\mu}\vert y\vert^{\alpha}}dxdy\right.\right].\\
\end{eqnarray*}
On the other hand, we know that $w_{\lambda^*}\in \mathcal{N}_{\lambda^*}$. Hence, we verify that
\begin{eqnarray*}
0 &\leq &\left<w_{\lambda^*}, \varphi \right>-\lambda^*H(w_{\lambda^*},\varphi)-D(w_{\lambda^*},\varphi)-\displaystyle\int_{\left(w_{\lambda^*}+\varepsilon\varphi\right)\leq 0}\nabla w_{\lambda^*}\nabla\varphi+V(x)w_{\lambda^*}\varphi\; dx,\;\;\varphi\in X.
\end{eqnarray*}
Similarly, using the same ideas just discussed in Lemma \ref{3 passos}, we deduce also that
\begin{equation*}\label{m5}
    \displaystyle\lim_{\varepsilon\to 0}\displaystyle\int_{\R^N}[\nabla w_{\lambda^*}\nabla\varphi+V(x)w_{\lambda^*}\varphi ]\chi_{\left(w_{\lambda^*}+\varepsilon\varphi\right)\leq 0}\; dx=0.
\end{equation*}
Thus, we obtain that
\begin{equation*}\label{m6}
  0\leq \left < w_{\lambda^*},\varphi\right>-\lambda^*H(w_{\lambda^*},\varphi)-D(w_{\lambda^*},\varphi),\,\, \varphi\in X.
\end{equation*}
As a consequence, we infer that
\begin{eqnarray*}\label{m8}
  \left<w_{\lambda^*}, \varphi \right>-\lambda^*H(w_{\lambda^*},\varphi)-D(w_{\lambda^*},\varphi)=0.
\end{eqnarray*}
Therefore, $w_{\lambda^*}$ is a weak solution for the Problem \eqref{P1} and $ w_{\lambda^*}\in \mathcal{N}_{\lambda^*}^-\cup \mathcal{N}_{\lambda^*}^0$.
In light of Proposition \ref{sem sol} we obtain that $ w_{\lambda^*}\in \mathcal{N}_{\lambda^*}^-$. A similar argument shows that $u_{\lambda^*}\in \mathcal{N}_{\lambda^*}^+$ is a weak solution for the Problem \eqref{P1}. This completes the proof.
  \end{proof}
\end{theorem}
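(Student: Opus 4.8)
The plan is to realize the two endpoint solutions as strong limits of the subcritical ground-- and bound--state solutions, and then to invoke the nonexistence result on $\mathcal N^0_{\lambda^*}$ to place the limits in the correct Nehari components. First I would fix a sequence $(\lambda_k)\subset(0,\lambda^*)$ with $\lambda_k\uparrow\lambda^*$; for each $k$, Theorem \ref{teorema 1} supplies positive weak solutions $u_{\lambda_k}\in\mathcal N^+_{\lambda_k}$ and $w_{\lambda_k}\in\mathcal N^-_{\lambda_k}$ of Problem \eqref{P1} with parameter $\lambda_k$. By the boundedness established inside Proposition \ref{C_n decrescente} these two sequences are bounded in $X$, so, up to a subsequence, $u_{\lambda_k}\rightharpoonup u_{\lambda^*}$ and $w_{\lambda_k}\rightharpoonup w_{\lambda^*}$ in $X$, with strong convergence in $L^m(\mathbb R^N)$ for every $m\in[2,2^*)$ by Lemma \ref{imerso cont e comp}; both weak limits are therefore nonnegative. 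The compactness argument of Proposition \ref{C_n decrescente} (item $a)$) then upgrades these to strong convergence $u_{\lambda_k}\to u_{\lambda^*}$ and $w_{\lambda_k}\to w_{\lambda^*}$ in $X$. Nontriviality is guaranteed because $\Vert w_{\lambda_k}\Vert\ge c>0$ uniformly by Lemma \ref{N^- longe do zero}, whence $w_{\lambda^*}\ne 0$, while $J_{\lambda^*}(u_{\lambda^*})=C_{\mathcal N^+_{\lambda^*}}<0$ forces $u_{\lambda^*}\ne 0$.

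Passing the first-- and second--derivative Nehari identities to the limit via \eqref{fibra de J'} and \eqref{fibra de J''} together with the strong convergence gives $\phi'_{\lambda^*,w_{\lambda^*}}(1)=0$ and $\phi''_{\lambda^*,w_{\lambda^*}}(1)\le 0$, so that $w_{\lambda^*}\in\mathcal N^-_{\lambda^*}\cup\mathcal N^0_{\lambda^*}$, and symmetrically $u_{\lambda^*}\in\mathcal N^+_{\lambda^*}\cup\mathcal N^0_{\lambda^*}$. To show that these limits actually solve \eqref{P1} I would pass to the limit in the weak formulation obeyed by each $w_{\lambda_k}$. The delicate point is the singular term, since $a\,w_{\lambda_k}^{q-1}\varphi$ cannot be controlled pointwise; instead I would apply Fatou's Lemma to the singular integral to obtain, for every $\varphi\in X_+$, the one--sided bound $\langle w_{\lambda^*},\varphi\rangle-D(w_{\lambda^*},\varphi)\ge\lambda^*\int_{\mathbb R^N}a\,G\,\varphi\,dx$, where $G=w_{\lambda^*}^{q-1}$ on $[w_{\lambda^*}>0]$ and $G=+\infty$ on $[w_{\lambda^*}=0]$. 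Finiteness of the left--hand side then forces $w_{\lambda^*}>0$ a.e. and $a\,w_{\lambda^*}^{q-1}\varphi\in L^1(\mathbb R^N)$, leaving the variational inequality $\langle w_{\lambda^*},\varphi\rangle-\lambda^* H(w_{\lambda^*},\varphi)-D(w_{\lambda^*},\varphi)\ge 0$ for all $\varphi\in X_+$.

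To promote this inequality to an equality valid for all $\varphi\in X$, I would repeat the cut--off device already used in Lemma \ref{3 passos} and Proposition \ref{solucao P1}: test with $\Psi=(w_{\lambda^*}+\varepsilon\varphi)^+\in X_+$, decompose $\Psi=(w_{\lambda^*}+\varepsilon\varphi)-(w_{\lambda^*}+\varepsilon\varphi)^-$, cancel the zeroth--order terms using the Nehari identity, and send $\varepsilon\to 0$, the integral over $[w_{\lambda^*}+\varepsilon\varphi\le 0]$ vanishing by Dominated Convergence. This yields that $w_{\lambda^*}$ is a weak solution of \eqref{P1}, and an identical argument applies to $u_{\lambda^*}$. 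Since each limit is a weak solution belonging to $\mathcal N^-_{\lambda^*}\cup\mathcal N^0_{\lambda^*}$ (respectively $\mathcal N^+_{\lambda^*}\cup\mathcal N^0_{\lambda^*}$), the nonexistence Proposition \ref{sem sol} — which exploits $a\notin L^1(\mathbb R^N)$ together with the Stein--Weiss inequality to derive a contradiction — excludes membership in $\mathcal N^0_{\lambda^*}$, so $w_{\lambda^*}\in\mathcal N^-_{\lambda^*}$ and $u_{\lambda^*}\in\mathcal N^+_{\lambda^*}$. I expect the crux to be exactly this endpoint identification: one must both recover a bona fide solution despite the non--differentiable singular term and prevent the minimizing limit from collapsing into $\mathcal N^0_{\lambda^*}$, which is nonempty at $\lambda^*$ by Lemma \ref{N0lambda*}; the nonexistence proposition is the ingredient that makes this possible.
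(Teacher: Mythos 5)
Your proposal is correct and follows essentially the same path as the paper's own proof: approximation by solutions at $\lambda_k\uparrow\lambda^*$, strong convergence via Proposition \ref{C_n decrescente}, Fatou's Lemma on the singular term to obtain positivity and the one-sided variational inequality, the test-function device $\Psi=(w_{\lambda^*}+\varepsilon\varphi)^+$ to upgrade it to an equality, and Proposition \ref{sem sol} to exclude $\mathcal{N}^0_{\lambda^*}$. Your added remarks on nontriviality (via Lemma \ref{N^- longe do zero} and $C_{\mathcal{N}^+_{\lambda^*}}<0$) are consistent with, and indeed appear in, the arguments of Proposition \ref{C_n decrescente}.
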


\subsection{The proof of Theorem \ref{teorema 2} completed} The proof of Theorem \ref{teorema 2} follows by using Proposition \ref{sem sol} and Theorem \ref{salva}.

\section{\textbf{Declarations}}

\textbf{Ethical Approval }

It is not applicable.

\textbf{Competing interests}

There are no competing interests.

\textbf{Authors' contributions}

All authors wrote and reviewed this paper.

\textbf{Funding}

The second author was partially supported by CNPq with grants 309026/2020-2. CNPq partially supported the fourth author with grant 316643/2021-1. Minbo Yang was partially supported by NSFC (12471114) and ZJNSF(LZ22A010001).

\textbf{Availability of data and materials}

All the data can be accessed from this article.

\end{document}